\documentclass[11pt]{amsart}
\usepackage[T1]{fontenc}
\usepackage{times,amsmath,amssymb,amsthm,mathtools,microtype}
\usepackage[text={450pt,575pt},headheight=9pt,centering]{geometry}
\usepackage[colorlinks=true,linkcolor=blue,citecolor=blue,urlcolor=blue]{hyperref}
\newtheorem{theorem}{Theorem}[section]
\newtheorem{proposition}[theorem]{Proposition}
\newtheorem{lemma}[theorem]{Lemma}
\newtheorem{corollary}[theorem]{Corollary}
\theoremstyle{definition}\newtheorem{definition}[theorem]{Definition}
\newtheorem{example}[theorem]{Example}
\theoremstyle{remark}\newtheorem{remark}[theorem]{Remark}
\newtheorem*{remark*}{Remark}
\newcommand{\Q}{\mathbb Q}
\newcommand{\C}{\mathbb C}\newcommand{\Z}{\mathbb Z}\newcommand{\N}{\mathbb N}
\newcommand{\LL}{\mathbb L}\newcommand{\Spec}{\operatorname{Spec}}
\newcommand{\Def}{\operatorname{Def}}
\newcommand{\LM}{\operatorname{LM}}\newcommand{\lcm}{\operatorname{lcm}}
\title[Minkowski decompositions and equivariant deformations]{Minkowski decompositions and universal equivariant deformations of toric pairs}
\author{Matej Filip}
\address{University of Ljubljana, Institute of Mathematics, Physics and Mechanics, Trzaska cesta 25, Ljubljana, Slovenia}
\email{matej.filip@fe.uni-lj.si}
\thanks{Supported by Slovenian Research Agency program P1-0222 and grants J1-60011, J1-70017}
\keywords{Deformation theory; toric singularities; miniversal deformations; Minkowski decompositions; Gr\"obner bases}
\subjclass[2020]{Primary 14B07; Secondary 14M25, 13P10, 52B20}
\date{}
\hypersetup{pdfauthor={Matej Filip},pdftitle={Minkowski decompositions and universal equivariant deformations of toric pairs}}
\begin{document}
\begin{abstract}
Let $X_\sigma$ be the affine normal toric variety associated with a
full-dimensional strongly convex rational polyhedral cone $\sigma$,
and let $m$ be a primitive degree with slice
$P_m=\sigma\cap[m=1]$. We construct an explicit flat algebraic
family whose completion is universal for equivariant deformations
in all degrees $-jm$, $j\ge1$, simultaneously.
We prove that the irreducible components of the reduced base
correspond bijectively to maximal lattice-friendly Minkowski
decompositions of $P_m$, and describe the induced family on each
component. When $m\in\sigma^\vee$, we also obtain a formally
universal equivariant family for the pair
$(X_\sigma,V(\chi^m))$.
These results extend earlier miniversality and component theorems
to arbitrary affine normal toric varieties and arbitrary primitive degrees.
\end{abstract}
\maketitle

\section{Introduction}

Constructing miniversal deformations and understanding the geometry
of their base spaces are central problems in algebraic geometry.
These constructions connect the deformation theory of singularities
with the local geometry of moduli spaces; see, for example,
\cite{Grauert,ArtinVersal,KSB,StevensCyclic,deJongVanStraten,Altmann}.

Two fundamental works on the construction of miniversal deformations
of toric varieties are \cite{KSB} and \cite{Altmann}.
Koll\'ar and Shepherd-Barron \cite{KSB} identified the reduced versal
components of quotient surface singularities with $P$-resolutions.
For isolated toric Gorenstein singularities $X$,
Altmann \cite{Altmann} constructed the full miniversal
deformation and identified the components of its reduced base with
maximal lattice Minkowski decompositions of the defining polytope.
He proved flatness by lifting relations among the defining equations
and established miniversality by showing that the Kodaira--Spencer
map is bijective and the obstruction map is injective.
A key feature of this setting is that, in dimension at least three,
the entire first-order deformation space is concentrated in degree
$-R^*$, so $T_X^1=T_X^1(-R^*)$, where $R^*$ is the Gorenstein
degree.

Altmann's results have been generalized in several directions.
Altmann--Kastner
\cite[Theorem 6.1]{AltmannKastner} removed the Gorenstein assumption
and extended the construction from degree $-R^*$ to degree $-m$
for an arbitrary primitive degree $m$ in the dual cone,
while retaining smoothness in codimension two.
For Gorenstein varieties, Filip
\cite[Theorem 1.1]{FilipPair} constructed miniversal deformations of
the pair consisting of the variety and its toric boundary in all
negative multiples of a degree satisfying certain restrictions, and
classified their reduced components by maximal Minkowski decompositions.

Altmann--Constantinescu--Filip \cite{ACFpoly,ACF} pursued the idea of
deriving miniversality from a universal property, with the goal of
constructing homogeneous deformations from the slice
$P_m=\sigma\cap[m=1]$, where $\sigma$ is the defining cone and
$m$ is a primitive degree. This led to the
construction of an initial object in a category of extensions of
pairs of semigroups \cite[Theorem 9.2]{ACFpoly}. That object underlies
the geometric family in \cite{ACF}.
The miniversality result of \cite[Theorem~6.2]{ACF} requires
a quite restrictive combinatorial assumption on the chosen degree,
which in particular forces $T_X^1(-jm)=0$ for all $j\ge2$
\cite[Remark~6.4]{ACF}.
Its proof still relies on lengthy computations establishing
injectivity of the obstruction map.

In this paper we remove that assumption and treat equivariant
deformations in all degrees $-jm$, $j\ge1$, simultaneously,
for an arbitrary primitive degree $m$. Our aim is to determine
both the relations among the deformation parameters and the
irreducible components of the resulting base. This leads to two
complementary constructions: explicit equations for a family whose
completion is universal (as we will see below, our approach uses
different techniques from those of the preceding works, avoiding relation-lifting
arguments and lengthy computations of the obstruction map),
and a reconstruction of Minkowski decompositions
from its fibres that proves the component classification.

Let $\LL$ be the character group, and let
$\sigma\subset\LL^*_{\mathbb R}$ be a full-dimensional strongly
convex rational polyhedral cone. Set
\[
 S=\sigma^\vee\cap\LL,\qquad A=\C[S],\qquad X=X_\sigma=\Spec A.
\]
Let $\operatorname{Art}$ be the category of local Artinian
$\C$-algebras with residue field $\C$ and local $\C$-algebra
homomorphisms. For a primitive degree $m\in\LL$, we consider
the following deformation functors
\[
 \Def^X:\operatorname{Art}\longrightarrow\mathrm{Sets},\qquad
 \Def_m^X:\operatorname{Art}_m\longrightarrow\mathrm{Sets}.
\]
Here $\operatorname{Art}_m$ consists of local Artinian
$\LL$-graded $\C$-algebras
$R=\C\oplus\bigoplus_{j\ge1}R_{jm}$ with graded local morphisms.
The functor $\Def^X$ classifies flat deformations of $X$.
For $\Def_m^X$, the deformation algebra is $\LL$-graded, and
all structure maps and isomorphisms preserve the grading.
We call $\Def_m^X$ the \mbox{(torus-)}equivariant deformation functor.
When $m\in S$, we also consider the corresponding pair functors
$\Def^{(X,V(\chi^m))}$ and $\Def_m^{(X,V(\chi^m))}$;
in the equivariant case, the divisor ideal is homogeneous.
For the precise graded definitions, cf.~Definition~\ref{def:pair}.

When $m\in S$, we first construct a flat family of the
pair $(X_\sigma,V(\chi^m))$ directly from the toric ideal.
In Section~\ref{sec:finite}, we perturb a monic Gr\"obner basis
homogeneously and obtain the parameter algebra by setting the
coefficients of the $S$-polynomial remainders to zero. Polynomial
division and Buchberger's criterion show that the standard monomials
form a basis over this algebra, proving flatness.
Section~\ref{sec:obstruction} establishes equivariant universality:
the adapted presentation makes the relevant homogeneous generator
lifts unique, so every pair deformation determines a unique
classifying map. After forgetting the divisor and removing the
parameter that records its choice, we obtain a formally universal
equivariant family of $X_\sigma$. The same section compares these families
with hulls of $\Def^X$ and $\Def^{(X,V(\chi^m))}$.
Here a hull is a miniversal formal deformation over a complete local
Noetherian base; see Subsection~\ref{subsec:usual-miniversal} for the
precise definition. Whenever such a hull exists, its formal family
can be chosen torus-equivariantly so that its restriction to the
prescribed homogeneous parameters is isomorphic to the corresponding
universal equivariant family.

Section~\ref{sec:components} identifies the irreducible components
of the reduced pair base with maximal lattice-friendly Minkowski
decompositions of $P_m$, in the sense of \cite{ACFpoly}, and
describes the family on each component. The key converse recovers
a lattice-friendly decomposition from the multiplication in a
fibre, showing that these decompositions exhaust the reduced
components.

Finally, Section~\ref{sec:arbitrary-degrees} identifies the equivariant
deformation functors for $\sigma$ and $\sigma\cap[m\ge0]$.
This isomorphism extends the
universal equivariant family of $X_\sigma$ and its component
correspondence to arbitrary primitive degrees.

The following theorems are the main results of this paper.

\begin{theorem}\label{thm:prorepresentability}
Suppose $m\in S$. There exist a finitely generated positively
graded $\C$-algebra $B_0$ and an explicit deformation family of
$(X,V(\chi^m))$ whose family algebra $\mathcal A_0$ is free over $B_0$.
Its completion is formally universal for
$\Def_m^{(X,V(\chi^m))}$.

The irreducible components of the reduced universal base
$(\Spec\widehat B_0)_{\mathrm{red}}$ are in one-to-one correspondence
with maximal lattice-friendly Minkowski decompositions of $P_m$.
A maximal decomposition $\mathcal D$ into $k>0$ nontrivial summands
gives a component of dimension $k$.
\end{theorem}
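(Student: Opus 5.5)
The proof has two parts: the existence of a flat family with a formally universal completion, and the classification of components of the reduced base.

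\textbf{Construction and flatness.} Write $A=\C[x_1,\dots,x_n]/I_\sigma$, where the $x_i$ correspond to the Hilbert basis of $S$ and $m$ is one of the generators. Fix a term order so that the toric ideal $I_\sigma$ has a reduced Gr\"obner basis of binomials $g_1,\dots,g_r$. Each $g_\alpha$ is then perturbed homogeneously. We add to $g_\alpha$ a general combination $\sum t_{\alpha,\beta}\,x^\beta$ of standard monomials, where each coefficient $t_{\alpha,\beta}$ is a new variable of degree $\deg g_\alpha-\deg x^\beta\in\{jm: j\ge1\}$. The divisor $V(\chi^m)$ is perturbed in the same way by a parameter for the choice of the generator. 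Next we run Buchberger's algorithm on the perturbed basis. The remainders of the $S$-polynomials are $\C[t]$-linear combinations of standard monomials, and we let $B_0$ be the quotient of $\C[t]$ by all of their coefficients. Over $B_0$ every $S$-polynomial reduces to zero, so Buchberger's criterion holds after base change. Division with remainder then shows that the standard monomials form a $B_0$-basis of $\mathcal A_0$, which gives flatness and freeness. Positivity of the grading on $B_0$ comes from the fact that all the $t$'s have degrees in $\{jm\}_{j\ge1}$. Finite generation holds because only finitely many standard monomials have degree $\deg g_\alpha-jm$ for some $j\ge1$, since $m$ lies in $S$ and $\sigma$ is strongly convex.

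\textbf{Universality.} Let $(\mathcal A_R,\mathcal I_R)$ be a graded pair deformation over $R\in\operatorname{Art}_m$. Each $g_\alpha$ lifts to a homogeneous element of the kernel. Reducing such a lift modulo the other lifts, and using the adapted presentation, normalizes it to the form ``leading monomial plus an $R$-combination of standard monomials of the correct degree''. This normal form is unique, because by flatness the standard monomials form an $R$-basis of $\mathcal A_R$. Reading off its coefficients gives values for the $t_{\alpha,\beta}$, and the Buchberger relations hold because the lifts generate a flat ideal. This produces a graded homomorphism $B_0\to R$ that induces the given deformation. The map is unique because normal forms are unique and graded automorphisms are rigid in positive degree. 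Completing $B_0$ gives the formal statement.

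\textbf{Components.} For a maximal lattice-friendly decomposition $P_m=P_0+\dots+P_k$, Proposition-level constructions in the spirit of \cite{ACFpoly,ACF} give a $k$-parameter family over a polynomial ring. Universality yields a classifying map from this parameter space to $\Spec \widehat B_0$, and injectivity on tangent spaces shows its image is $k$-dimensional. The converse is the main obstacle. Take a generic point of an irreducible component of $(\Spec\widehat B_0)_{\mathrm{red}}$, pass to a curve through it, and examine the fibre algebra. Its multiplication table on the standard monomials, graded by height $j$ over $m$, should determine a decomposition of the monomials of degree $m$ into clusters that multiply independently. I would try to recover the summands $P_i$ as the convex hulls of the vertex sets determined by which coordinates deform together, check lattice-friendliness from the integrality of the degrees of the surviving monomials, and show that this construction inverts the previous one. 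Maximality of the component then corresponds to maximality of the decomposition. This gives the bijection, and the dimension $k$ is read off from the number of independent scaling parameters.
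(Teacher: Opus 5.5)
\textbf{Universality.} The family you describe is not universal. You add a separate degree-$m$ parameter $t_0$ for the divisor generator. But the Gr\"obner perturbation already contains the translation of the variable $U=x_m$ for $\chi^m$. The substitution $U=U'+t_0$ identifies your pair $(\mathcal A_0,(U-t_0))$ with the pair $(\mathcal A_0,(U'))$ over shifted coefficients $t'(t,t_0)$; in the adapted form it replaces $p_\alpha(U)$ by $p_\alpha(U'+t_0)$. Hence classifying maps are not unique, and the Kodaira--Spencer map of $B_0[t_0]$ has a one-dimensional kernel. The paper takes the divisor to be $V(U)$ with no extra parameter, and this translation is exactly the factor $t_0$ in $B_0\simeq B[t_0]$.

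Your uniqueness argument also names the wrong mechanism. What is needed is uniqueness of the homogeneous generator lifts. For an indecomposable $s_i\ne m$, the fact $s_i-jm\notin S$ gives $(\mathfrak m_RA_R)_{s_i}=0$. But $(\mathfrak m_RA_R)_m=R_m$ is in general nonzero, and only the divisor fixes the lift of $\chi^m$: it is the unique homogeneous generator of $I_R$ lifting $\chi^m$ (Lemma~\ref{lem:divisor-lifts}, which uses $(A_R)_0=\C$). Rigidity of graded automorphisms does not address this.

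An arbitrary term order is also not enough. It must refine the $\ell$-weight, so that correction terms are lower and $\LM(G_\alpha)=\LM(g_\alpha)$, and $U$ must divide no leading monomial. Then the standard monomials are $U^qN_b$. This gives $G_\alpha=\LM(g_\alpha)-p_\alpha(U)W_\alpha$ and freeness over $B_0[U]$, hence flatness of $\mathcal A_0/(U)$. A pair family requires this flatness, and you do not check it.

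Finally, your count is off by one. The $k+1$ nontrivial summands $P_0,\dots,P_k$ give a pair component of dimension $k+1$, with one parameter per summand. Dimension $k$ is the count for $X$ after removing $t_0$; with your extra parameter it would be $k+2$.

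\textbf{Components.} Injectivity on tangent spaces fails in general. Every $t_{\beta,j}$ with $j\ge2$ is sent to a polynomial of degree $j$ in the $\lambda_i$. So the tangent map of $\psi_{\mathcal D}$ at the origin is $\lambda\mapsto(-\sum_in_{i\beta}\lambda_i)_\beta$, which is injective only if the vectors $(n_{i\beta})_\beta$ are linearly independent. For $\mathcal D_1:P=\{-\tfrac12\}+E+E$ in Example~\ref{ex:eighth-components}, the map is symmetric in $\lambda_1,\lambda_2$, and the direction $(0,1,-1)$ lies in the kernel of its tangent map. The paper argues by finiteness instead: each $\lambda_i$ is a root of a monic $p_\beta\in B_{0,\mathcal D}[U]$, so $\C[\lambda_1,\dots,\lambda_k]$ is integral over the image.

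The converse, which you rightly call the main obstacle, has no mechanism in your sketch. Since $A_m=\C\chi^m$, there are no ``monomials of degree $m$'' to cluster. The missing idea is to factor multiplication polynomials at closed points of the algebraic base $\Spec B_0$, not at generic points of the formal base. In the adapted basis, the fibre multiplication is $e_ce_d=p_{c,d}(U)e_{c+d}$ with monic $p_{c,d}\in\C[U]$. The distinct roots $a_i$ of these polynomials index the summands. The multiplicities $\operatorname{ord}_{a_i}p_{c,d}$ are coboundaries $g_i(c)+g_i(d)-g_i(c+d)$ of integer functions, found by inverting the $e_c$ over $\C(U)$. After normalizing so that $\sum_ig_i=\eta_{\Z}$, take an integer $D$ clearing the denominators of the vertices of $P$. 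Then the functions $D^{-1}g_i(D\,\cdot)$ are support functions of polyhedra $Q_i$. Lattice-friendliness follows by counting rounding errors at $c$ and $(D-1)c$.

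Even with this, the claim that maximal components correspond to maximal decompositions needs three more steps. First, the finitely many images $\mathcal C^0_{\mathcal D}$ cover the reduced base. Second, $\mathcal C^0_{\mathcal D}\subseteq\mathcal C^0_{\mathcal E}$ holds exactly when $\mathcal E$ refines $\mathcal D$, which is proved by matching the roots of the $p_\beta$. Third, the minimal primes survive completion.
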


\begin{theorem}\label{thm:main}
For $m\notin-S$, there is an explicit finitely generated positively graded
$\C$-algebra $B$, with parameter degrees in $\Z_{>0}m$,
and a finitely generated $B$-algebra $\mathcal A$ that is free as a
$B$-module and has special fibre $A$. The completion of this family
at the origin is formally universal for $\Def_m^X$.

The irreducible components of the reduced universal equivariant
base $(\Spec\widehat B)_{\mathrm{red}}$ are in one-to-one
correspondence with maximal lattice-friendly Minkowski
decompositions of $P_m$.
For a maximal decomposition $\mathcal D$ into $k>0$ nontrivial
summands, the corresponding component has dimension $k-1$.
\end{theorem}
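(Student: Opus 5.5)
We prove Theorem~\ref{thm:main} by reducing to Theorem~\ref{thm:prorepresentability}, following the outline of Section~\ref{sec:arbitrary-degrees}. The reduction uses the cone $\tau=\sigma\cap[m\ge0]$. Since $m\notin-S$, the half-space $[m\ge0]$ meets the interior of $\sigma$, so $\tau$ is again full-dimensional and strongly convex. Its dual $\tau^\vee=\sigma^\vee+\mathbb R_{\ge0}m$ contains $m$, so $m\in S_\tau=\tau^\vee\cap\LL$. The slice is unchanged: $\tau\cap[m=1]=\sigma\cap[m=1]=P_m$. The first step is to construct an isomorphism of equivariant deformation functors $\Def_m^{X_\sigma}\cong\Def_m^{X_\tau}$ on $\operatorname{Art}_m$.

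To obtain this isomorphism, I would use the fact that $A_\sigma=\C[S_\sigma]\subset A_\tau=\C[S_\tau]$. Moreover, $A_\tau$ is recovered from $A_\sigma$ by adjoining the homogeneous elements $\chi^{s+jm}$, and $\chi^{s+jm}$ lies in $A_\sigma$ once $\langle s,\cdot\rangle$ is large on the cone. For a deformation over $R=\C\oplus\bigoplus_{j\ge1}R_{jm}$, all perturbations have degrees in $-\Z_{>0}m$. A homogeneous lift of $\chi^{u}$ with $u\in S_\tau$ is therefore forced degree by degree, and I would check that the two functors agree on tangent and obstruction spaces degree by degree. At the level of $T^1$ this amounts to $T^1_{X_\sigma}(-jm)\cong T^1_{X_\tau}(-jm)$ for $j\ge1$. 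Altmann's description of $T^1(-R)$ depends only on the crosscut of the cone in degree $R$, and for $R=jm$ the two cones have the same slice, namely $jP_m$. The functor map itself would be extension and restriction: from a flat graded deformation $\mathcal A_\tau$ of $A_\tau$, take the subalgebra generated by the degrees in $S_\sigma$, and check flatness using homogeneity of the $R_{jm}$ pieces. The inverse localizes or saturates along the lift of $\chi^m$. I expect this step to be the main obstacle: one has to show that restriction preserves flatness and that the constructions are mutually inverse over every Artinian base, not only at first order. I would first try a graded filtration argument by induction on the length of $R$, together with a small-extension comparison of obstructions in the degrees $-jm$.

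Granting the functor isomorphism, the rest follows from Theorem~\ref{thm:prorepresentability} applied to $(X_\tau,V(\chi^m))$. That theorem gives the algebra $B_0$ and the free family $\mathcal A_0$. As described in Section~\ref{sec:obstruction}, we then forget the divisor and remove the single parameter recording its choice, which is the degree $-m$ deformation moving $\chi^m\mapsto\chi^m+t$. This yields $B$ and a free family that is formally universal for $\Def_m^{X_\tau}$. Transporting along the isomorphism, and taking $\mathcal A$ to be the corresponding $S_\sigma$-graded subalgebra, gives a free $B$-algebra with special fibre $A$. Its completion is formally universal for $\Def_m^X$, and its parameter degrees lie in $\Z_{>0}m$ because those of $B_0$ do.

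For the components, the slice is unchanged, so the bijection of Theorem~\ref{thm:prorepresentability} for $\tau$ applies with the same $P_m$. Removing the divisor parameter corresponds to taking $B_0\cong B[t]$, or a quotient of it by a free direction, so each pair component of dimension $k$ maps to an equivariant component of dimension $k-1$. The reduced components of $\Spec\widehat B$ are therefore in bijection with maximal lattice-friendly Minkowski decompositions of $P_m$, with the stated dimension $k-1$.
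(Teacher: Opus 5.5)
Your reduction follows the paper's route: pass to $\tau=\sigma\cap[m\ge0]$, restrict via $A_{\tau,R}\mapsto\bigoplus_{s\in S_\sigma}(A_{\tau,R})_s$, apply the pair construction and the removal of the divisor parameter to $X_\tau$, and read off the components from the common slice $P_m$. The gap is the step you grant, that restriction is an isomorphism $\Def_m^{X_\tau}\to\Def_m^{X_\sigma}$. It carries essentially all the new content of this theorem, and the inverse you sketch for it does not exist as stated.

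When $\tau\ne\sigma$ we have $m\notin S_\sigma$, and $(A_{\sigma,R})_m=R_m\subset\mathfrak m_R$. So a $\sigma$-deformation contains no lift of $\chi^m$ to localize or saturate along. Moreover, $X_\tau\to X_\sigma$ is a toric modification, not a localization. Defining quotients like $e_{s+m}/e_s$ in a deformation and proving flatness of what they generate is exactly the difficulty. (Lifts of $\chi^u$ are also not forced; they are ambiguous by $\sum_jR_{jm}e_{u-jm}$.)

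The paper never constructs an inverse; it proves bijectivity of $\mathcal R$ by obstruction theory. The small-extension comparison you mention is the right tool, but it needs an input you do not supply. Restriction of homogeneous Harrison cochains must induce isomorphisms $T^i_{X_\tau}(-jm)\to T^i_{X_\sigma}(-jm)$ for $i=1,2$, carrying obstruction classes to obstruction classes. Altmann's crosscut description gives only $T^1$. The $T^2$ part is proved with the Altmann--Sletsj\o e quasilinear complexes. One checks that the nonzero faces contributing in degree $-jm$ are common to both cones. One then checks that a quasilinear function on $K^{jm}_{\sigma,F}$ extends uniquely to $K^{jm}_{\tau,F}$ by $\widetilde f(a)=f(a+nw)-nf(w)$, with $w\in\operatorname{relint}(\sigma^\vee\cap F^\perp)$. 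Injectivity on $T^2$ kills the obstruction to lifting $A_{\tau,R}$, and surjectivity on $T^1$ corrects the lift to the prescribed one. This is formal smoothness.

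Even with this, you would only know that the restricted family is miniversal; universality needs injectivity of $\mathcal R$, and $T^1$-injectivity alone does not give it. Take two $\tau$-liftings with isomorphic restrictions. The reduction of that isomorphism and the restriction of a chosen identification over $R$ differ by an automorphism of the $\sigma$-deformation over $R$. Only when this automorphism is trivial does the difference class of the two $\tau$-liftings map to zero in $(T^1_{X_\sigma}\otimes_\C E)_0$.

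The paper secures this by proving $\operatorname{Der}_\C(\C[S_\sigma])_{-jm}=0$ for $j\ge1$, using the rays on which $m$ is positive. That vanishing fails exactly when $P_m$ is an integral translate of its tail cone, which must be handled separately; there both functors are trivial.

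Finally, the statement requires $\mathcal A$ to be finitely generated over $B$, which a subalgebra of a finitely generated algebra need not be. The paper proves it by expressing each $e_s$ through lifts of semigroup generators of $S_\sigma$. The induction is on $\ell(s)$ for some $\ell\in\operatorname{int}\sigma$ with $\ell(m)>0$.
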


\begin{theorem}\label{thm:usual-miniversality}
If $\Def^X$ admits a hull, then a corresponding miniversal formal
family can be chosen torus-equivariantly so that its restriction to
homogeneous parameters of degrees $jm$, $j\ge1$, is isomorphic
to the universal equivariant family over $\widehat B$.
For $m\in S$, the same statement holds for
$\Def^{(X,V(\chi^m))}$, with $\widehat B$ replaced by $\widehat B_0$.
\end{theorem}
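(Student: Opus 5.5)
The plan is to compare a hull of $\Def^X$ with the formally universal equivariant family of Theorem~\ref{thm:main}, using the torus action on the hull. This splits into three steps: make the hull equivariant, restrict it to the degrees $jm$, and identify the restriction with the universal equivariant family.

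\textbf{Step 1: an equivariant hull.} Let $T$ be the torus with character group $\LL$. By hypothesis $\Def^X$ admits a hull $(R,\xi)$ with $R$ complete local Noetherian. Since $A$ is $\LL$-graded, $T$ acts on $\Def^X$. I would invoke the standard result (Rim's theorem on equivariant hulls for linearly reductive groups, or Pinkham's construction for $\C^*$, adapted to a torus) to choose $R$ so that:
\begin{itemize}
\item $R$ carries a compatible $T$-action, so its tangent space $t_R^*=T^1_X$ is $\LL$-graded;
\item $R=\C[[t_\alpha]]/J$, where each $t_\alpha$ is homogeneous of degree $-\deg(\text{dual element of }T^1_X)$ and $J$ is a homogeneous ideal;
\item the formal family $\xi$ is given by homogeneous perturbations of the equations.
\end{itemize}
Concretely, one averages the cocycle lifts over $T$ order by order, using that $T$-representations on finite-dimensional truncations are completely reducible.

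\textbf{Step 2: restriction to degrees $jm$.} Set $R_m=R/(t_\alpha:\deg t_\alpha\notin\Z_{>0}m)$. Because $J$ is homogeneous, $R_m$ is a complete local algebra that is a limit of objects of $\operatorname{Art}_m$. The restricted family $\xi_m$ is then an element of $\widehat{\Def}_m^X(R_m)$. Formal universality of $\widehat B$ from Theorem~\ref{thm:main} gives a unique graded map $\phi:\widehat B\to R_m$ classifying $\xi_m$.

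\textbf{Step 3: construct an inverse.} The completed family over $\widehat B$ is in particular a formal deformation of $X$. Versality of $R$ gives a map $\psi:R\to\widehat B$ pulling $\xi$ back to it. This is the main obstacle: versality alone gives a non-unique $\psi$, and we need one that is graded. I would choose $\psi$ inductively over the truncations $\widehat B/\mathfrak m^n$, averaging over $T$ at each stage; the compatible action and linear reductivity allow this. A graded $\psi$ must kill every $t_\alpha$ whose degree is not in $\Z_{>0}m$, because the parameters of $\widehat B$ have degrees in $\Z_{>0}m$. Hence $\psi$ factors through $R_m$.

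The composite $\phi\circ\psi:R_m\to R_m$ induces the identity on tangent spaces, since both are identified with $\bigoplus_j T^1_X(-jm)$ by the Kodaira--Spencer maps. It is therefore surjective, and a surjective endomorphism of a complete Noetherian local ring is an automorphism. On the other side, the uniqueness in formal universality forces $\psi\circ\phi=\mathrm{id}_{\widehat B}$, since it classifies the universal family itself. Thus $R_m\cong\widehat B$ with matching families.

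\textbf{The pair case.} For $m\in S$ the same argument applies to $\Def^{(X,V(\chi^m))}$, using Theorem~\ref{thm:prorepresentability} with $\widehat B_0$. The torus acts on the pair because $\chi^m$ is homogeneous, so Steps~1--3 go through verbatim.
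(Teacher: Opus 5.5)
Your proposal is correct and follows essentially the same route as the paper (Corollaries~\ref{cor:ordinary-pair}, \ref{cor:ordinary-X} and~\ref{cor:ordinary-arbitrary}). Both arguments choose an equivariant hull via Rim's theorem, restrict it to the parameters of degrees $jm$, obtain a graded classifying map from $\widehat B$ by universality and a graded map back by equivariant versality, and conclude because the composites must be automorphisms. The paper spells out two points that you leave implicit: the averaging at each stage must also produce equivariant comparison isomorphisms (these form a torsor under $E\otimes_\C T_X^0$), and the case $m\in -S$ relies on Remark~\ref{rem:nonpositive} rather than Theorem~\ref{thm:main}.
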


For Gorenstein $X$, the equality $V(\chi^{R^*})=\partial X$ shows
that Theorem~\ref{thm:prorepresentability} gives a universal formal
equivariant deformation of $(X,\partial X)$, hence miniversality for
the pair in all negative multiples of $R^*$.
Theorem~\ref{thm:main} gives the universal equivariant deformation
of $X$ in the same degrees. For isolated Gorenstein $X$ of dimension at least three,
this is a miniversal formal deformation for $\Def^X$ by
Theorem~\ref{thm:usual-miniversality}, since the entire first-order
deformation space is concentrated in degree $-R^*$.

These results connect algebraic geometry with discrete mathematics
by describing the reduced components of deformation spaces and their
flat families in terms of lattice-friendly Minkowski decompositions.

\noindent\textbf{Acknowledgments.}
I thank Klaus Altmann and Alexandru Constantinescu for many helpful
discussions.

\section{The construction of a deformation family}\label{sec:finite}

In this section we construct a flat family of the pair
$(X,V(\chi^m))$ for a primitive character $m\in S$, where
$X=X_\sigma=\Spec\C[S]$, $S=\sigma^\vee\cap\LL$, and $A=\C[S]$.

\begin{definition}\label{def:pair-family}
Let $B$ be a finitely generated $\C$-algebra with an augmentation
$B\to\C$. A \emph{flat family} of $(X,V(\chi^m))$ over $\Spec B$
consists of a morphism
\[
 (\Spec\mathcal A,V(\mathcal I))\longrightarrow\Spec B,
\]
where $\mathcal A$ is a finitely generated $B$-algebra and
$\mathcal I\subset\mathcal A$ is an ideal such that both
$\mathcal A$ and $\mathcal A/\mathcal I$ are $B$-flat, together
with an isomorphism $\mathcal A\otimes_B\C\simeq A$ carrying
the image of $\mathcal I$ to $(\chi^m)$.
\end{definition}

Choose an integral $\ell\in\operatorname{int}\sigma$, so
$\ell(s)>0$ for every nonzero $s\in S$, in particular for $s=m$.
Let $s_1,\ldots,s_n$ be the minimal generators of $S$ different
from $m$, and use the presentation
\[
 P=\C[U,x_1,\ldots,x_n]\twoheadrightarrow A,\qquad
 U\longmapsto\chi^m,\quad x_i\longmapsto\chi^{s_i},
 \qquad \deg U=m,\quad\deg x_i=s_i.
\]
Write $I_S$ for the kernel. Order monomials first by their total
$\ell$-weight, then by the $\ell$-weight contributed by the variables
other than $U$, and finally by a fixed monomial order.
Recall that a monomial order is a total well-order compatible with
multiplication. For a nonzero polynomial $f$, its leading monomial
$\LM(f)$ is the largest monomial occurring in $f$, without its coefficient.

Let $g_1,\ldots,g_r$ be the reduced monic homogeneous Gröbner basis
of $I_S$ for the chosen monomial order. Here the terminology is
as follows: the Gröbner-basis property means that the leading
monomial of every nonzero $f\in I_S$ is divisible by some
$\LM(g_\alpha)$. Monic means that each $g_\alpha$ has leading
coefficient $1$, and reduced means that no monomial of $g_\alpha$
is divisible by $\LM(g_\beta)$ for $\beta\ne\alpha$.
Homogeneous means that all terms have the same character degree.

Let $\mathcal N$ be the set of standard monomials in
$P=\C[U,x_1,\ldots,x_n]$, namely those divisible by none of the
$\LM(g_\alpha)$. Their residue classes form a homogeneous
$\C$-basis of $A$, so every $s\in S$ has a standard monomial
representative. If distinct standard monomials $\mu,\nu$ had
degree $s$, then $\mu-\nu\in I_S$ would have a standard leading
monomial, contradicting the Gröbner-basis property. Denote the
unique standard monomial of degree $s$ by $N_s$.

For every nonzero $a\in S$, define
\begin{equation}\label{eq:boundarydecomp}
 \partial_{\N a}S=\{b\in S:b-a\notin S\},\qquad
 S=\coprod_{b\in\partial_{\N a}S}(b+\N a).
\end{equation}
Indeed, for $s\in S$, the largest integer $q\ge0$ with
$s-qa\in S$ is finite and determines the unique expression $s=b+qa$ with
$b\in\partial_{\N a}S$.

For the chosen monomial order, no $\LM(g_\alpha)$ is divisible by $U$.
Suppose $U\mid\LM(g_\alpha)$. By homogeneity, all terms have
the same total $\ell$-weight. A term without $U$ would therefore
have greater weight in the $x_i$ and would be larger than
$\LM(g_\alpha)$ in the chosen order. Thus $U$ divides every term.
Since $I_S$ is prime and $U\notin I_S$, the quotient
$g_\alpha/U$ lies in $I_S$. The Gr\"obner-basis property then
gives a basis leading monomial dividing $\LM(g_\alpha)/U$.
It would properly divide $\LM(g_\alpha)$, contradicting reducedness.
Consequently, every $U^qN_b$ with $b\in\partial_{\N m}S$ is
standard. By \eqref{eq:boundarydecomp} and uniqueness of the
standard monomial in each degree, these are all the standard
monomials, and $N_{b+qm}=U^qN_b$.

For $1\le\alpha\le r$, put $d_\alpha=\deg g_\alpha\in S$.
Applying \eqref{eq:boundarydecomp} with $a=m$, define
\[
 \eta_{\Z,\alpha}
 :=\max\{q\in\Z_{\ge0}:d_\alpha-qm\in S\},\qquad
 b_\alpha=d_\alpha-\eta_{\Z,\alpha}m\in\partial_{\N m}S.
\]
The boundary decomposition proves that this maximum is finite.
For $j\ge0$, the correction degree $d_\alpha-jm$ belongs to $S$
exactly when $j\le\eta_{\Z,\alpha}$, since then it equals
$b_\alpha+(\eta_{\Z,\alpha}-j)m$.
With $W_\alpha=N_{b_\alpha}$, the standard-monomial identity above gives
\[
 N_{d_\alpha-jm}=U^{\eta_{\Z,\alpha}-j}W_\alpha,
 \qquad 0\le j\le\eta_{\Z,\alpha}.
\]
Reducedness and homogeneity force every nonleading monomial of
$g_\alpha$ to be the unique standard monomial $N_{d_\alpha}$.
Since $g_\alpha\in I_S$ is monic, we have
$g_\alpha=\LM(g_\alpha)-N_{d_\alpha}$.

For $1\le j\le\eta_{\Z,\alpha}$, introduce a deformation parameter
$t_{\alpha,j}$ of degree $jm$, and define
\begin{equation}\label{eq:G}
 G_\alpha=g_\alpha-\sum_{j=1}^{\eta_{\Z,\alpha}}t_{\alpha,j}N_{d_\alpha-jm},
 \qquad T=\C[t_{\alpha,j}].
\end{equation}
Substituting these expressions into \eqref{eq:G} and factoring out
$W_\alpha$ gives
\begin{equation}\label{eq:finiteeta}
 G_\alpha=\LM(g_\alpha)-p_\alpha(U)W_\alpha,\qquad
 p_\alpha(U)=U^{\eta_{\Z,\alpha}}+\sum_{j=1}^{\eta_{\Z,\alpha}}
                 t_{\alpha,j}U^{\eta_{\Z,\alpha}-j}.
\end{equation}
If $\eta_{\Z,\alpha}=0$, no parameter occurs in $G_\alpha$.
Each correction term has character degree $d_\alpha$, so
$G_\alpha$ is homogeneous. Treating the parameters as coefficients,
its correction monomials have $\ell$-weight
$\ell(d_\alpha)-j\ell(m)<\ell(d_\alpha)$.
Hence $\LM(G_\alpha)=\LM(g_\alpha)$, and $G_\alpha$ remains monic.

We obtain equations in the parameters by requiring every pairwise
$S$-polynomial to have zero remainder upon division by
$(G_1,\ldots,G_r)$.
For each pair $\alpha<\beta$, form the $S$-polynomial
\[
 S_{\alpha\beta}
 =\frac{\mu_{\alpha\beta}}{\LM(g_\alpha)}G_\alpha
  -\frac{\mu_{\alpha\beta}}{\LM(g_\beta)}G_\beta,
 \qquad
 \mu_{\alpha\beta}=\lcm\bigl(\LM(g_\alpha),\LM(g_\beta)\bigr).
\]
Here $\lcm$ takes the coordinatewise maximum of the monomial
exponents. The two multiples have the same leading monomial
$\mu_{\alpha\beta}$, which cancels in their difference.
We now divide $S_{\alpha\beta}$ by the ordered list
$(G_1,\ldots,G_r)$, treating the parameters as coefficients
(we call the following computation the \emph{division procedure}).
Start with the polynomial $H=S_{\alpha\beta}$ and remainder
$R_{\alpha\beta}=0$. As long as $H\ne0$, write its leading term
as $c\mu$, where $\mu=\LM(H)$ is a monomial in $U,x_1,\ldots,x_n$ and
$0\ne c\in T$ is its coefficient, a polynomial in the deformation
parameters. There are two cases:
\begin{itemize}
\item If some $\LM(G_\gamma)$ divides $\mu$, choose the smallest
such index $\gamma$ and replace
\[
 H\longmapsto H-c\frac{\mu}{\LM(G_\gamma)}G_\gamma.
\]
The subtracted polynomial has leading term $c\mu$, so this term
cancels and only smaller monomials are introduced.
\item If no $\LM(G_\gamma)$ divides $\mu$, then $\mu$ is standard.
Add $c\mu$ to $R_{\alpha\beta}$ and replace $H$ by $H-c\mu$.
\end{itemize}
In either case, the leading monomial strictly decreases whenever
the new $H$ is nonzero. The process therefore terminates, since
the monomial order is a well-order. Monicity ensures that cancelling
$c\mu$ requires no inversion of coefficients in $T$.

At termination, the remainder is a finite sum
\[
 R_{\alpha\beta}
 =\sum_{N\in\mathcal N}\rho_{\alpha\beta,N}(t)N,
 \qquad \rho_{\alpha\beta,N}(t)\in T.
\]
Choosing the smallest index $\gamma$ with $\LM(G_\gamma)\mid \mu$
at each division step makes the procedure deterministic and fixes
the remainder. Let $J_{\mathrm{coeff}}\subset T$ be the ideal generated
by all coefficient polynomials $\rho_{\alpha\beta,N}(t)$, for all
pairs $\alpha<\beta$. These are precisely the equations requiring
every remainder to vanish. Set
\begin{equation}\label{eq:B0}
 B_0=T/J_{\mathrm{coeff}},\qquad
 \mathcal A_0=B_0[U,x_1,\ldots,x_n]/(G_1,\ldots,G_r).
\end{equation}
The division identities descend from $T$ to $B_0$, where every
remainder $R_{\alpha\beta}$ vanishes. Buchberger's criterion
therefore shows that the images of $G_1,\ldots,G_r$ form a
Gröbner basis in $B_0[U,x_1,\ldots,x_n]$; see e.g.\
\cite[Chapter 4]{AL}. The monic form of the criterion applies
here because the divisions and the leading-term cancellation
argument require no inversion of coefficients, even when $B_0$
has zero divisors or nilpotents.
We also write $U$ for its image in $\mathcal A_0$, and denote by
$V(U)=\Spec(\mathcal A_0/(U))$ the closed subscheme it defines.
The following proposition shows that the family algebra
$\mathcal A_0$, together with the divisor ideal $(U)$, gives the
pair family of Theorem~\ref{thm:prorepresentability}.

\begin{proposition}\label{prop:free}
The algebra $\mathcal A_0$ is free over $B_0[U]$, with basis
$\{N_b:b\in\partial_{\N m}S\}$, and $\mathcal A_0/(U)$ is
free over $B_0$ with basis the images of these monomials.
Consequently,
\[
 (\Spec\mathcal A_0,V(U))\longrightarrow\Spec B_0
\]
is a flat family of $(X,V(\chi^m))$.
\end{proposition}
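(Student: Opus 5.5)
The plan is to use the Gröbner-basis property of $G_1,\dots,G_r$ over $B_0$, which Buchberger's criterion gives as explained just before the statement, to show that the standard monomials $\mathcal N$ form a $B_0$-basis of $\mathcal A_0$. The statement about $B_0[U]$ then follows from the factorization $\mathcal N=\{U^qN_b:q\ge0,\ b\in\partial_{\N m}S\}$ established earlier.

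First, spanning. Since each $G_\alpha$ is monic with $\LM(G_\alpha)=\LM(g_\alpha)$, the division procedure applies to any polynomial in $B_0[U,x_1,\dots,x_n]$ without inverting coefficients. It writes the polynomial as a combination of the $G_\alpha$ plus a $B_0$-combination of standard monomials. Termination follows from the well-order; the procedure is only needed on finitely supported polynomials, and although the order is by $\ell$-weight first, each weight level contains finitely many monomials. Hence $\mathcal N$ spans $\mathcal A_0$ over $B_0$.

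Second, independence, which I expect to be the main point. Suppose $f=\sum_{N\in\mathcal N}c_NN$ with $c_N\in B_0$ not all zero lies in the ideal $(G_1,\dots,G_r)$. Over a ring with zero divisors I would not argue with leading monomials of arbitrary ideal elements directly. Instead I would use the standard-representation form of Buchberger's criterion for monic bases, as in \cite[Chapter 4]{AL}. Vanishing of all $S$-polynomial remainders implies that every $h\in(G_\alpha)$ has a representation $h=\sum h_\alpha G_\alpha$ with $\LM(h_\alpha G_\alpha)\le\LM(h)$. Because the $G_\alpha$ are monic, the leading coefficient of $h$ then equals a sum of leading coefficients of terms $h_\alpha\LM(G_\alpha)$ in degree $\LM(h)$. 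So $\LM(h)$ is divisible by some $\LM(G_\alpha)$, or else those contributions cancel. The latter is excluded by the standard-representation bound.

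Concretely, I would proceed by Noetherian induction on the highest monomial $\mu$ appearing among the $h_\alpha G_\alpha$. If $\mu>\LM(f)$, then the top parts cancel. That cancellation is a syzygy of the monic leading terms, so it is generated by the lcm-syzygies defining the $S_{\alpha\beta}$. Substituting the zero-remainder division identities then lowers $\mu$. Monicity is what allows the syzygy module of the leading terms to be generated by the binomial syzygies over any ring $B_0$.

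At the end, $\LM(f)$ is some $N\in\mathcal N$ with nonzero coefficient. But $N$ must be divisible by some $\LM(G_\alpha)$, contradicting standardness. Hence $\mathcal N$ is a $B_0$-basis, and regrouping as $U^qN_b$ shows $\mathcal A_0$ is free over $B_0[U]$ with basis $\{N_b\}$.

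Finally, since $\mathcal A_0\cong\bigoplus_b B_0[U]N_b$ as $B_0[U]$-modules, we get $\mathcal A_0/(U)\cong\bigoplus_b B_0\,\overline{N_b}$, which is free over $B_0$. Both $\mathcal A_0$ and $\mathcal A_0/(U)$ are therefore $B_0$-free, hence flat. Setting all $t_{\alpha,j}=0$ recovers $P/(g_1,\dots,g_r)=P/I_S=A$, with $U$ mapping to $\chi^m$, so the image of $(U)$ is $(\chi^m)$. Together with finite generation of $\mathcal A_0$ over $B_0$, this verifies Definition~\ref{def:pair-family}.
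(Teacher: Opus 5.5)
Your proposal is correct and follows the paper's proof. Spanning comes from the monic division procedure. Independence holds because a nonzero $B_0$-combination of standard monomials lying in $(G_1,\ldots,G_r)$ would have a standard leading monomial divisible by some $\LM(G_\alpha)$. Regrouping $\mathcal N=\{U^qN_b\}$ then gives freeness of $\mathcal A_0$ over $B_0[U]$ and of $\mathcal A_0/(U)$ over $B_0$. The only difference is that you unpack the monic Buchberger criterion over a ring with zero divisors, via standard representations and the lcm-syzygies of the monic leading terms, whereas the paper simply invokes it from \cite[Chapter 4]{AL}.
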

\begin{proof}
Division expresses every residue class as a $B_0$-linear
combination of standard monomials, proving spanning. For linear
independence, suppose a finite sum $\sum_{N\in\mathcal N}r_NN$,
with $r_N\in B_0$, belongs to $(G_1,\ldots,G_r)$. If some
$r_N\ne0$, this is a nonzero polynomial with a standard leading
monomial. The Gröbner-basis property would make that monomial
divisible by some $\LM(G_\alpha)=\LM(g_\alpha)$, a contradiction.
Thus all $r_N$ vanish.
The standard monomials are $U^qN_b$, with
$b\in\partial_{\N m}S$ and $q\ge0$, so this basis gives
\begin{equation}\label{eq:adaptedfree}
 \mathcal A_0=\bigoplus_{b\in\partial_{\N m}S}B_0[U]N_b,
 \qquad
 \mathcal A_0/(U)=\bigoplus_{b\in\partial_{\N m}S}B_0\overline{N_b},
\end{equation}
where the bars denote images in the quotient by $U$.
Both algebras are therefore $B_0$-flat. At the origin, the equations
$G_\alpha$ specialize to $g_\alpha$ and $U$ maps to $\chi^m$,
so their special fibres are $A$ and $A/(\chi^m)$, respectively.
\end{proof}

\begin{example}\label{ex:eighth}
Let $\sigma=\mathbb R_{\ge0}^2\subset\LL^*_{\mathbb R}$ be the
positive quadrant in the coordinates dual to $(p,q)$, where
\[
 \LL=\{(p,q)\in\Z^2:p+3q\in8\Z\}.
\]
Then $S=\sigma^\vee\cap\LL=\LL\cap\Z_{\ge0}^2$, and
$X=\Spec\C[S]$ is the cyclic quotient singularity $\frac18(1,3)$.
Take $m=(2,2)$. The minimal generators different from $m$ are
\[
 s_1=(8,0),\qquad s_2=(5,1),\qquad
 s_3=(1,5),\qquad s_4=(0,8).
\]
Use the presentation $P=\C[U,z_0,z_1,z_3,z_4]\twoheadrightarrow A$
sending $U$ to $\chi^m$ and $z_0,z_1,z_3,z_4$ to
$\chi^{s_1},\ldots,\chi^{s_4}$, respectively; thus $\deg U=m$.
The vectors $(3,-1)$ and $m$ form a basis of $\LL$, so $m$ is
primitive. In the basis dual to $((3,-1),m)$, the primitive ray
generators of $\sigma$ are $(-1,2)$ and $(3,2)$.
Thus the slice $P_m=\sigma\cap[m=1]$ identifies with
$[-\tfrac12,\tfrac32]$.

Take $\ell(p,q)=p+q$, so $\ell(m)=4$. As in the construction
above, first compare total $\ell$-weights. For equal weights,
compare the weights
contributed by the variables other than $U$, namely $(8,6,0,6,8)$ on
$(z_0,z_1,U,z_3,z_4)$. If these weights also agree, compare total
degrees in $z_0,z_1,z_3,z_4$. If the degrees are equal as well, use
lexicographic order with
$z_0>z_1>z_3>z_4>U$.
For this order, the reduced monic Gr\"obner basis is
\begin{equation}\label{eq:eighth-central}
\begin{aligned}
 g_1&=z_1^2-Uz_0,&g_2&=z_3^2-Uz_4,\\
 g_3&=z_0z_3-U^2z_1,&g_4&=z_1z_4-U^2z_3,\\
 g_5&=z_1z_3-U^3,&g_6&=z_0z_4-U^4.
\end{aligned}
\end{equation}
The first monomial of each $g_\alpha$ is its leading monomial.
For instance, $z_1^2$
and $Uz_0$ both have $\ell$-weight $12$, while their secondary
weights are $12$ and $8$. The $S$-polynomial for $g_1,g_5$ is
\[
 z_3g_1-z_1g_5=-Ug_3.
\]
Its leading term is $-Uz_0z_3$, so the division procedure adds
$Ug_3$, leaving zero. All remaining $S$-polynomials also have
zero remainder. A point $(p,q)\in S$ belongs to $\partial_{\N m}S$ precisely
when $p<2$ or $q<2$. Hence
\[
 \partial_{\N m}S
 =\{(8n,0),(0,8n),(8n+5,1),(1,8n+5):n\ge0\}.
\]
The corresponding standard monomials form the set
\[
 \mathcal W=\{N_b:b\in\partial_{\N m}S\}
 =\{1\}\cup\{z_0^n,z_4^n:n\ge1\}
       \cup\{z_1z_0^n,z_3z_4^n:n\ge0\}.
\]
All standard monomials are $U^qW$, with $q\ge0$ and
$W\in\mathcal W$. The equation degrees, parameter bounds, and
monomials in \eqref{eq:finiteeta} are
\[
\begin{array}{c|c|c|c|c}
 \alpha&d_\alpha&\eta_{\Z,\alpha}&b_\alpha&W_\alpha\\ \hline
 1&(10,2)&1&(8,0)&z_0\\
 2&(2,10)&1&(0,8)&z_4\\
 3&(9,5)&2&(5,1)&z_1\\
 4&(5,9)&2&(1,5)&z_3\\
 5&(6,6)&3&(0,0)&1\\
 6&(8,8)&4&(0,0)&1
\end{array}
\]
For example, $d_1-m=(8,0)$ gives $N_{d_1-m}=z_0$, whereas
$d_1-2m=(6,-2)\notin S$, so $\eta_{\Z,1}=1$.
Each correction monomial has $\ell$-weight $\ell(d_\alpha)-4j$,
so $\LM(G_\alpha)=\LM(g_\alpha)$, as in the general construction.
Substitution into \eqref{eq:G} gives the six lifted equations
\begin{equation}\label{eq:eighth-coefficients}
\begin{aligned}
 G_1&=z_1^2-(U+t_{1,1})z_0,\\
 G_2&=z_3^2-(U+t_{2,1})z_4,\\
 G_3&=z_0z_3-(U^2+t_{3,1}U+t_{3,2})z_1,\\
 G_4&=z_1z_4-(U^2+t_{4,1}U+t_{4,2})z_3,\\
 G_5&=z_1z_3-(U^3+t_{5,1}U^2+t_{5,2}U+t_{5,3}),\\
 G_6&=z_0z_4-(U^4+t_{6,1}U^3+t_{6,2}U^2+t_{6,3}U+t_{6,4}).
\end{aligned}
\end{equation}
There are thirteen parameters $t_{\alpha,j}$, with $\deg t_{\alpha,j}=jm$.
As in \eqref{eq:finiteeta}, write
$G_\alpha=\LM(g_\alpha)-p_\alpha(U)W_\alpha$; thus
$p_1(U)=U+t_{1,1}$ and
$p_3(U)=U^2+t_{3,1}U+t_{3,2}$.
For $G_1,G_5$, form $S_{15}=z_3G_1-z_1G_5$.
Applying the division procedure to this $S$-polynomial, we first
add $UG_3$ and then $t_{1,1}G_3$, giving
\[
 S_{15}+p_1(U)G_3=\bigl(p_5-p_1p_3\bigr)z_1.
\]
Every $U^kz_1$ is standard, so the right-hand side is the
remainder $R_{15}$. Consequently, the coefficients of
$p_5-p_1p_3$ as a polynomial in $U$ belong to
$J_{\mathrm{coeff}}$.
Division of the remaining $S$-polynomials gives exactly the polynomial identities
\begin{equation}\label{eq:eighth-overlaps}
 p_5=p_1p_3=p_2p_4,\qquad p_6=p_3p_4.
\end{equation}
Indeed, besides $R_{15}$, the remainders
$R_{25}=(p_5-p_2p_4)z_3$ and
$R_{36}=(p_6-p_3p_4)z_3$ supply their coefficients, and these
identities make all fifteen pair remainders vanish.

These are identities of polynomials in $U$, so the coefficients
of each power of $U$ must agree. For example,
\[
\begin{aligned}
 p_1p_3
 &=(U+t_{1,1})(U^2+t_{3,1}U+t_{3,2})\\
 &=U^3+(t_{1,1}+t_{3,1})U^2
       +(t_{3,2}+t_{1,1}t_{3,1})U+t_{1,1}t_{3,2}.
\end{aligned}
\]
Matching the coefficients of $U^2,U,1$ with those of $p_5$
gives the first column below. The second follows from $p_5=p_2p_4$:
\[
\begin{aligned}
 t_{5,1}&=t_{1,1}+t_{3,1},
 &t_{5,1}&=t_{2,1}+t_{4,1},\\
 t_{5,2}&=t_{3,2}+t_{1,1}t_{3,1},
 &t_{5,2}&=t_{4,2}+t_{2,1}t_{4,1},\\
 t_{5,3}&=t_{1,1}t_{3,2},
 &t_{5,3}&=t_{2,1}t_{4,2}.
\end{aligned}
\]
The identity $p_6=p_3p_4$ similarly gives
\[
\begin{aligned}
 t_{6,1}&=t_{3,1}+t_{4,1},\\
 t_{6,2}&=t_{3,2}+t_{3,1}t_{4,1}+t_{4,2},\\
 t_{6,3}&=t_{3,2}t_{4,1}+t_{3,1}t_{4,2},\\
 t_{6,4}&=t_{3,2}t_{4,2}.
\end{aligned}
\]
The ideal $J_{\mathrm{coeff}}$ is generated by the differences
between the left and right sides of these ten equations.
\end{example}

\section{Universality and miniversality}\label{sec:obstruction}

In this section we prove equivariant universality for the pair $(X,V(\chi^m))$
and for $X$. In Subsection~\ref{subsec:usual-miniversal}, we pass
to $\Def^X$ and $\Def^{(X,V(\chi^m))}$ on $\operatorname{Art}$
and compare their hulls, when they exist, with these universal
equivariant families.

\begin{definition}\label{def:pair}
Let $m\in\LL$ be primitive. Let $\operatorname{Art}_m$ be the
category of local Artinian $\LL$-graded $\C$-algebras
\begin{equation}\label{eq:bases}
 R=\C\oplus\bigoplus_{j\ge1}R_{jm},\qquad
 \mathfrak m_R=\bigoplus_{j\ge1}R_{jm},
\end{equation}
with graded local $\C$-algebra homomorphisms.
The functor $\Def_m^X:\operatorname{Art}_m\to\mathrm{Sets}$
assigns to $R$ the isomorphism classes of pairs $(A_R,\iota_R)$,
where $A_R$ is a finitely generated flat commutative $\LL$-graded
$R$-algebra
and $\iota_R:A_R/\mathfrak m_RA_R\xrightarrow{\sim}A$ is a
graded $\C$-algebra isomorphism. An isomorphism
$(A_R,\iota_R)\to(A'_R,\iota'_R)$ is a graded $R$-algebra
isomorphism $\varphi:A_R\to A'_R$ satisfying
$\iota'_R\circ\overline\varphi=\iota_R$, where
$\overline\varphi:A_R/\mathfrak m_RA_R\to
A'_R/\mathfrak m_RA'_R$ is the map induced by $\varphi$ on the
special fibres.

If $m\in S$, the functor
$\Def_m^{(X,V(\chi^m))}:\operatorname{Art}_m\to\mathrm{Sets}$
assigns to $R$ the isomorphism classes of triples
$(A_R,I_R,\iota_R)$, where $(A_R,\iota_R)$ is as above and
$I_R\subset A_R$ is a homogeneous ideal such that $A_R/I_R$
is $R$-flat and
$\iota_R\bigl((I_R+\mathfrak m_RA_R)/\mathfrak m_RA_R\bigr)=(\chi^m)$.
Isomorphisms additionally satisfy $\varphi(I_R)=I'_R$.

For a base map $R\to R'$, both functors act by the base change
$A_R\mapsto A_R\otimes_RR'$, with the induced special-fibre
identification; for pairs, the ideal becomes
$I_R(A_R\otimes_RR')$. We call these the torus-equivariant
deformation functors on $\operatorname{Art}_m$.
\end{definition}

For the remainder of this section, assume $m\in S$.
Throughout Subsections~\ref{subsec:pairs} and~\ref{subsec:minimal},
we work over $\operatorname{Art}_m$, with graded deformations
and morphisms as in Definition~\ref{def:pair}.
Equivalently, the torus $\mathbb T=\Spec\C[\LL]$ acts on both
the total space and the base, and the deformation morphism is
equivariant.

Using homogeneous presentations and lifts, the standard tangent
and obstruction calculations apply degree by degree, since taking
homogeneous components is exact. Formal smoothness, versality
and universality are understood in this graded category;
we recall the corresponding forms of the standard definitions
in \cite{Sernesi} below.

For constructions of miniversal deformations by lifting equations
and relations and analysing obstruction maps, see
\cite{Altmann,deJongVanStraten}; general accounts of these methods
are given in \cite{deJongPfister,StevensBook}.

\subsection{The universal pair family}\label{sec:versal}\label{subsec:pairs}

Write
\[
 T_X^i=\operatorname{Ext}_A^i(L_{A/\C},A),\qquad i=1,2,
\]
for the cotangent cohomology groups, where $L_{A/\C}$ is the
cotangent complex. The torus action on $X$ induces the
$\LL$-grading
\[
 T_X^i=\bigoplus_{u\in\LL}T_X^i(u),\qquad i=1,2.
\]
These graded cotangent cohomology groups have been extensively
studied for affine toric varieties; see, for example,
\cite{Altmann,AltmannSletsjoe,FilipHochschild}.

Let $C$ be a complete local Noetherian $\C$-algebra with residue
field $\C$ and maximal ideal $\mathfrak m_C$, whose Artinian
quotients carry compatible gradings of the form \eqref{eq:bases}.
A formal graded family over $C$ is a compatible system of graded
flat deformations over $C/\mathfrak m_C^{n+1}$.
For $R\in\operatorname{Art}_m$, write $h_C(R)$ for the continuous
local graded $\C$-algebra maps $C\to R$. For a fixed formal family, a map $\varphi\in h_C(R)$
is a \emph{classifying map} for a deformation over $R$ if the
pullback of that family along $\varphi$ is isomorphic to the given
deformation. The algebra $C$ \emph{prorepresents} a deformation
functor $F:\operatorname{Art}_m\to\mathrm{Sets}$ if there is a
natural bijection $h_C(R)\simeq F(R)$ for every
$R\in\operatorname{Art}_m$; the corresponding formal family is
then \emph{universal}.

For $j\ge1$, put $D_j=\C[\epsilon]/(\epsilon^2)$ with
$\deg\epsilon=jm$. Taking homogeneous components in the standard
first-order classification \cite[Proposition~3.1.1]{Sernesi} gives
$\Def_m^X(D_j)\simeq T_X^1(-jm)$.
For a formal family of $X$ over $C$, pullback along $C\to D_j$
defines its \emph{Kodaira--Spencer map}
\[
 \operatorname{KS}_{C,j}:
 \operatorname{Hom}_{\C}\bigl((\mathfrak m_C/\mathfrak m_C^2)_{jm},\C\bigr)
 \longrightarrow T_X^1(-jm).
\]

A graded surjection $R'\twoheadrightarrow R$ is a
\emph{small extension} if its kernel $E$ satisfies
$\mathfrak m_{R'}E=0$. A natural transformation $F\to G$ of
functors on these bases is \emph{formally smooth} if, for every
small extension, the natural map
\[
 F(R')\longrightarrow F(R)\mathbin{\times}_{G(R)}G(R')
\]
is surjective. A family inducing $h_C\to F$ is
\emph{formally versal} if this transformation is formally smooth:
a classifying map over $R$ must lift compatibly with any prescribed
deformation over $R'$.
It is \emph{formally miniversal} if it is formally versal and
its Kodaira--Spencer map is bijective in every degree.
In particular, a prorepresenting family is formally miniversal.

The next two lemmas prepare the proof of universality.

\begin{lemma}[{see e.g.~\cite[Tag~051F]{Stacks}}]\label{lem:basis}
Let $R$ be a local Artinian ring and $M$ a flat $R$-module.
Any lifts to $M$ of a basis of $M/\mathfrak m_RM$ form an
$R$-basis of $M$.
\end{lemma}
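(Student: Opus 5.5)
The plan is the standard Nakayama-type argument: prove spanning by Nakayama's lemma, which holds trivially since $\mathfrak m_R$ is nilpotent, and then prove independence using flatness. Let $(\bar e_i)_{i\in I}$ be a basis of $M/\mathfrak m_RM$, and let $e_i\in M$ be arbitrary lifts. Define $\phi:F=R^{(I)}\to M$ by sending the standard basis vectors to the $e_i$.

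\textbf{Spanning.} Set $N=\operatorname{im}\phi$. By construction $N+\mathfrak m_RM=M$. Iterating gives
\[
M=N+\mathfrak m_R(N+\mathfrak m_RM)=N+\mathfrak m_R^2M=\cdots=N+\mathfrak m_R^kM
\]
for every $k$. Since $R$ is Artinian, $\mathfrak m_R^k=0$ for some $k$, so $M=N$. This step needs no finiteness assumption on $M$, precisely because $\mathfrak m_R$ is nilpotent.

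\textbf{Injectivity.} Let $K=\ker\phi$, which gives the exact sequence
\[
0\to K\to F\to M\to0.
\]
Because $M$ is $R$-flat, $\operatorname{Tor}_1^R(M,R/\mathfrak m_R)=0$, so tensoring with $R/\mathfrak m_R$ yields the exact sequence
\[
0\to K/\mathfrak m_RK\to F/\mathfrak m_RF\to M/\mathfrak m_RM\to0.
\]
The middle map sends the standard basis to the basis $(\bar e_i)$, so it is an isomorphism. Hence $K/\mathfrak m_RK=0$, that is, $K=\mathfrak m_RK$. Iterating gives $K=\mathfrak m_R^kK=0$. Therefore $\phi$ is an isomorphism, and $(e_i)$ is an $R$-basis of $M$.

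\textbf{Main obstacle.} The only delicate step is injectivity, since for a general module the lifts need not be independent. This is exactly where flatness enters, through the vanishing of $\operatorname{Tor}_1$. For infinite bases one also has to check that no finite generation is needed; nilpotence of $\mathfrak m_R$ in place of the usual Nakayama lemma handles this. This is the form in which the lemma will later be applied to the graded pieces of $A_R$, with the lifts of $\{N_b\}$ serving as the basis.
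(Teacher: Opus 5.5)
Your proof is correct, and it is the standard argument behind the Stacks Project lemma that the paper cites without giving a proof of its own: nilpotent Nakayama gives spanning, and $\operatorname{Tor}_1^R(M,R/\mathfrak m_R)=0$ forces $K=\mathfrak m_RK=0$, which gives independence. One small correction to your closing aside: the paper applies the lemma to the whole flat module $A_R$, using homogeneous lifts of the monomial basis (for example the $e_b$ or the $N_s(\widetilde x,U_R)$), rather than to individual graded pieces, since those pieces are not $R$-submodules.
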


Let $A_R$ be a graded flat deformation of $A$ over an Artinian
base $R$, with its given special-fibre identification
$A_R/\mathfrak m_RA_R\simeq A$.
We write $(A_R)_s$ for its homogeneous component of character
degree $s$.
For later use, flatness also gives the graded identifications
\begin{equation}\label{eq:flatgraded}
 \mathfrak m_R^pA_R/\mathfrak m_R^{p+1}A_R
 \simeq(\mathfrak m_R^p/\mathfrak m_R^{p+1})\otimes_\C A.
\end{equation}

\begin{lemma}\label{lem:divisor-lifts}
For every $0\ne s\in S$, let $I_R\subset A_R$ be a homogeneous
ideal such that $A_R/I_R$ is $R$-flat and $I_R$ maps onto
$(\chi^s)$ under the special-fibre map $A_R\to A$.
Then $I_R=(f)$ for a unique homogeneous element $f\in(A_R)_s$
mapping to $\chi^s$. Conversely, every such lift $f$ generates
an ideal with these properties. This correspondence commutes
with base change.
\end{lemma}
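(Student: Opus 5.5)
The plan is to use flatness to identify $I_R$ modulo $\mathfrak m_R$ with the principal ideal $(\chi^s)$. Nakayama then gives a single homogeneous generator, and a degree argument in the graded pieces of $A_R$ gives uniqueness. The converse follows from the local flatness criterion applied to multiplication by $f$.

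\emph{Existence.} Consider the exact sequence $0\to I_R\to A_R\to A_R/I_R\to0$. Since $A_R/I_R$ is $R$-flat, tensoring with $\C=R/\mathfrak m_R$ preserves exactness. Hence $I_R/\mathfrak m_RI_R\to A$ is injective, with image the image of $I_R$, which is $(\chi^s)$ by hypothesis. The ideal $(\chi^s)\subset A$ is homogeneous and generated by $\chi^s$ in degree $s$. Because $I_R$ is homogeneous and taking homogeneous components is exact, we can choose $f\in(I_R)_s$ mapping to $\chi^s$. Then $I_R=(f)+\mathfrak m_RI_R$. Since $\mathfrak m_R$ is nilpotent, iterating this gives $I_R=(f)$.

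\emph{Uniqueness.} This is the step needing a little care. Let $f'\in(A_R)_s$ be another homogeneous element with $(f')=I_R$ and $f'\mapsto\chi^s$. Write $f'=af$ with $a\in A_R$. Comparing degree-$s$ components and using that $f$ is homogeneous of degree $s$, we get $f'=a_0f$, where $a_0$ is the degree-$0$ component of $a$. I claim $(A_R)_0=\C\cdot1$. By Lemma~\ref{lem:basis}, applied degree by degree, or equivalently by filtering with \eqref{eq:flatgraded}, the degree-$0$ part of $A_R$ is built from the pieces $(\mathfrak m_R^p/\mathfrak m_R^{p+1})_{jm}\otimes A_{-jm}$ together with $A_0=\C$. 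Here $A_{-jm}=0$ for $j\ge1$: we have $m\in S\setminus\{0\}$, and $S$ is pointed because $\sigma$ is full-dimensional, so $-jm\notin S$. Thus $a_0\in\C$. Reducing to the special fibre gives $a_0\chi^s=\chi^s$, hence $a_0=1$ and $f'=f$.

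\emph{Converse.} Let $f\in(A_R)_s$ be any homogeneous lift of $\chi^s$. Multiplication by $f$ is a map $A_R\to A_R$ of flat $R$-modules. Its reduction modulo $\mathfrak m_R$ is multiplication by $\chi^s$ on the domain $A$, which is injective. By the local criterion of flatness (for instance via the long exact Tor sequence and induction on the length of $R$, using nilpotence of $\mathfrak m_R$), $f$ is a nonzerodivisor on $A_R$ and $\operatorname{coker}=A_R/(f)$ is $R$-flat. The ideal $(f)$ is homogeneous and maps onto $(\chi^s)$.

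\emph{Base change.} For $R\to R'$, the ideal $I_R(A_R\otimes_RR')$ is generated by the image of $f$. That image is again a homogeneous lift of $\chi^s$ of degree $s$. By the uniqueness just shown, it is the generator attached to the base-changed ideal, so the correspondence commutes with base change.

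The main obstacle I expect is making the claim $(A_R)_0=\C$ rigorous. This is where the hypotheses $m\in S$ and strong convexity enter, and I would handle it through the graded filtration \eqref{eq:flatgraded}.
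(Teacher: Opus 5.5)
Your proof is correct. Existence and uniqueness essentially match the paper's argument. The paper also reduces $0\to I_R\to A_R\to A_R/I_R\to0$ modulo $\mathfrak m_R$ using flatness of $A_R/I_R$ and lifts $\chi^s$. It proves $(A_R)_0=\C$ by lifting a homogeneous monomial basis $e_b$ via Lemma~\ref{lem:basis}, which is equivalent to your filtration argument with \eqref{eq:flatgraded}.

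Two steps are organized differently.

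\emph{Existence.} The paper notes that $I_R$ is itself $R$-flat and shows that multiplication by $f$, $A_R\to I_R$, is an isomorphism: it is surjective by nilpotence and injective because its kernel has zero reduction. This gives $I_R=(f)$ and regularity of $f$ at once. Your Nakayama iteration yields only $I_R=(f)$, and you recover regularity later from the converse, which is fine.

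\emph{Converse.} Here you use a genuinely different tool. The paper does not appeal to the local flatness criterion. It applies the boundary decomposition \eqref{eq:boundarydecomp} with $a=s$, so that the elements $f^qe_b$, with $b\in\partial_{\N s}S$ and $q\ge0$, lift a basis of $A$ and hence form an $R$-basis of $A_R$. This gives the stronger, explicit conclusion that $A_R$ is free over $R[f]$ and $A_R/(f)$ is $R$-free on the images of the $e_b$, parallel to \eqref{eq:adaptedfree}. Your argument instead treats multiplication by $f$ as a map of flat modules over an Artinian ring whose reduction is injective. It needs only that $A$ is a domain, so it is more general and avoids the combinatorics of $\partial_{\N s}S$. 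The cost is that you obtain flatness rather than an explicit basis.
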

\begin{proof}
Since $A_R$ and $A_R/I_R$ are $R$-flat, the exact sequence
\[
 0\longrightarrow I_R\longrightarrow A_R\longrightarrow A_R/I_R
 \longrightarrow0
\]
shows that $I_R$ is $R$-flat. Moreover, the sequence remains exact
after reduction modulo $\mathfrak m_R$. Thus $I_R/\mathfrak m_RI_R$ identifies
with $(\chi^s)\subset A$, and we can choose a homogeneous
lift $f\in I_R$ of $\chi^s$.
Multiplication $A_R\to I_R$ by $f$ is an isomorphism after reduction.
Nilpotence of $\mathfrak m_R$ gives surjectivity; flatness of $I_R$
then makes its kernel have zero reduction, hence zero. Thus
$I_R=(f)$ and $f$ is regular.

For uniqueness, we first show that the degree-zero subalgebra
$(A_R)_0$ consists only of scalars, so $(A_R)_0=\C$.
Choose homogeneous lifts $e_b$ of $\chi^b$ for $b\in S$, with
$e_0=1$. They form an $R$-basis by Lemma~\ref{lem:basis}, so
\[
 (A_R)_0=\bigoplus_{b\in S}R_{-b}e_b=\C\cdot1.
\]
Indeed, $R_{-b}\ne0$ requires $b=-jm$ for some $j\ge0$;
since $0\ne m\in S$ and $S\cap(-S)=\{0\}$, this is possible
only for $b=0$. Now let $g\in(I_R)_s$ be another lift of $\chi^s$.
Write $g=af$ using $I_R=(f)$. Taking the degree-$s$ component
gives $g=a_0f$, where $a_0\in(A_R)_0=\C$. Reduction to the
special fibre gives $\chi^s=a_0\chi^s$, hence $a_0=1$ and $g=f$.

Conversely, let $f$ be any homogeneous lift of $\chi^s$ and
choose homogeneous lifts $e_b$ of $\chi^b$ for
$b\in\partial_{\N s}S$. By \eqref{eq:boundarydecomp}, the
elements $f^qe_b$, $q\ge0$, lift a basis of $A$ and are an
$R$-basis by Lemma~\ref{lem:basis}. Hence $A_R$ is free over
$R[f]$, and $A_R/(f)$ is $R$-free with basis the images of the
$e_b$. Uniqueness of the generator proves compatibility with base change.
\end{proof}

Put $\mathfrak m_0=(t_{\alpha,j})\subset B_0$ and denote the
completions by
\[
 \widehat B_0=\varprojlim_n B_0/\mathfrak m_0^{n+1},\qquad
 \widehat{\mathcal A}_0
 =\varprojlim_n\mathcal A_0/\mathfrak m_0^{n+1}\mathcal A_0.
\]
We write
\[
 \widehat\pi_0:
 (\operatorname{Spf}\widehat{\mathcal A}_0,V(U))
 \longrightarrow\operatorname{Spf}\widehat B_0
\]
for the formal pair family, that is, the compatible system of its
reductions over $B_0/\mathfrak m_0^{n+1}$.

\begin{proposition}\label{prop:marked-pair}\label{prop:versal}
The completion $\widehat\pi_0$ of the pair family constructed in
Section~\ref{sec:finite} induces a natural isomorphism
$h_{\widehat B_0}\simeq\Def_m^{(X,V(\chi^m))}$; thus
$\widehat B_0$ prorepresents this functor.
\end{proposition}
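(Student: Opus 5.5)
Given a pair deformation $(A_R,I_R,\iota_R)$ over $R\in\operatorname{Art}_m$, I will construct a unique graded map $\varphi:\widehat B_0\to R$ whose pullback of $\widehat\pi_0$ is isomorphic to it. By Lemma~\ref{lem:divisor-lifts}, $I_R=(f)$ for a unique homogeneous lift $f\in(A_R)_m$ of $\chi^m$, so the divisor is rigidly marked. Next I choose homogeneous lifts $y_i\in(A_R)_{s_i}$ of $\chi^{s_i}$ and let $u=f$. The graded $R$-algebra map $R[U,x_1,\dots,x_n]\to A_R$ sending $U\mapsto f$ and $x_i\mapsto y_i$ is surjective by Nakayama. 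For each $b\in\partial_{\N m}S$ put $e_b=N_b(f,y)$. The elements $f^qe_b$ lift the basis $U^qN_b$ of $A$, so by Lemma~\ref{lem:basis} they form an $R$-basis of $A_R$, and $A_R$ is free over $R[f]$ with basis $\{e_b\}$.

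The key step is to show that the lifts of the equations take the shape \eqref{eq:finiteeta}. Evaluate $\LM(g_\alpha)(f,y)$, which lies in $(A_R)_{d_\alpha}$, and expand it in the basis $f^qe_b$. Only components of degree $d_\alpha$ occur. Since $R$ is concentrated in degrees $jm$ with $j\ge0$, a coefficient in $R_{jm}$ multiplies a basis element of degree $d_\alpha-jm$. Such a basis element exists only when $d_\alpha-jm\in S$, that is, when $j\le\eta_{\Z,\alpha}$, and then it equals $f^{\eta_{\Z,\alpha}-j}e_{b_\alpha}$ by uniqueness of standard monomials in each degree. Hence
\[
 \LM(g_\alpha)(f,y)=\Bigl(f^{\eta_{\Z,\alpha}}+\sum_{j=1}^{\eta_{\Z,\alpha}}\tau_{\alpha,j}f^{\eta_{\Z,\alpha}-j}\Bigr)e_{b_\alpha}
\]
with uniquely determined $\tau_{\alpha,j}\in R_{jm}$. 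The leading coefficient is $1$ because it reduces to $1$ modulo $\mathfrak m_R$ and lives in $R_0=\C$. Define $\varphi(t_{\alpha,j})=\tau_{\alpha,j}$ on $T$. Then every $G_\alpha$ maps to zero in $A_R$, so $R[U,x]/(\varphi G)\to A_R$ is well defined.

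It remains to show that $\varphi$ kills $J_{\mathrm{coeff}}$ and that the induced map is an isomorphism. Run the division procedure on $S_{\alpha\beta}$ with coefficients pushed to $R$. Its image in $A_R$ is zero, so the image of the remainder $\sum\varphi(\rho_{\alpha\beta,N})N(f,y)$ is zero. The standard monomials $N(f,y)=f^qe_b$ are an $R$-basis, so each $\varphi(\rho_{\alpha\beta,N})=0$ and $\varphi$ factors through $B_0$. Continuity is automatic because $\mathfrak m_R$ is nilpotent. By Proposition~\ref{prop:free}, $\mathcal A_0\otimes_{B_0}R$ is $R$-free on the standard monomials. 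The map to $A_R$ sends this basis to a basis, so it is an isomorphism carrying $(U)$ to $(f)=I_R$ and compatible with $\iota_R$. This gives surjectivity of $h_{\widehat B_0}(R)\to\Def_m^{(X,V(\chi^m))}(R)$.

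The main obstacle is injectivity: the $\tau$'s must be independent of the choice of lifts and must be invariant under isomorphisms of pair deformations. Here the grading does the work, the same way it did in the uniqueness part of Lemma~\ref{lem:divisor-lifts}. A homogeneous lift of $\chi^{s_i}$ is unique up to adding elements of $\mathfrak m_R A_R$ of degree $s_i$, and such elements are $\sum_{j\ge1} R_{jm}\,(\text{basis elements of degree } s_i-jm)$. Since $s_i$ is a minimal generator different from $m$, we have $s_i-jm\notin S$ for $j\ge1$, so this space is $0$ and the lift $y_i$ is unique. Likewise $f$ is unique by Lemma~\ref{lem:divisor-lifts}. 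A graded isomorphism $\psi$ of pair deformations compatible with the $\iota$'s sends the unique lifts to the unique lifts, so it preserves the $\tau_{\alpha,j}$. Conversely, two maps $\varphi,\varphi'$ with isomorphic pullbacks give, through the isomorphism, the same canonical generators in $A_R$. The coefficients $\tau$ are read off uniquely from the basis expansion, so $\varphi=\varphi'$. Naturality in $R$ follows because base change preserves homogeneous lifts and the expansions, giving $h_{\widehat B_0}\simeq\Def_m^{(X,V(\chi^m))}$.
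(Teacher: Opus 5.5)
Your proposal is correct and follows essentially the same route as the paper. The steps match: the divisor generator is fixed by Lemma~\ref{lem:divisor-lifts}; the other generator lifts are unique because $s_i-jm\notin S$ for $j\ge1$; the parameters are read off from the standard-monomial expansion of the relations; the vanishing remainders kill $J_{\mathrm{coeff}}$; and the classifying map is unique because pair isomorphisms preserve these canonical lifts. The differences are cosmetic: you expand $\LM(g_\alpha)$ and use $R_0=\C$ for the monic coefficient where the paper expands $g_\alpha$, and you pass from $B_0$ to $\widehat B_0$ more tersely.
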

\begin{proof}
Consider a pair deformation $(A_R,I_R)$.
Lemma~\ref{lem:divisor-lifts} determines a unique homogeneous
generator $U_R$ of $I_R$ lifting $\chi^m$. Every other generator
degree $s_i$ is indecomposable in $S$ and differs from $m$, so
$s_i-jm\notin S$ for all $j\ge1$.
Two homogeneous lifts of $\chi^{s_i}$ differ
by an element of $(\mathfrak m_RA_R)_{s_i}$.
For every $p\ge1$, equation~\eqref{eq:flatgraded} gives
\[
 \bigl(\mathfrak m_R^pA_R/\mathfrak m_R^{p+1}A_R\bigr)_{s_i}
 \simeq\bigoplus_{j\ge1}
 (\mathfrak m_R^p/\mathfrak m_R^{p+1})_{jm}
 \otimes_\C A_{s_i-jm}=0.
\]
Since $\mathfrak m_R$ is nilpotent, this implies
$(\mathfrak m_RA_R)_{s_i}=0$. Hence the homogeneous lift
$\widetilde x_i$ is unique.

The elements $N_s(\widetilde x,U_R)$ lift the monomial basis of
$A$ and form an $R$-basis by Lemma~\ref{lem:basis}. In particular,
the chosen elements generate $A_R$. Expanding the central relations
in this basis gives uniquely
\[
 g_\alpha(\widetilde x,U_R)=
 \sum_{j=1}^{\eta_{\Z,\alpha}}r_{\alpha,j}
 N_{d_\alpha-jm}(\widetilde x,U_R),
 \qquad r_{\alpha,j}\in R_{jm}.
\]
Consequently the lifted equations $G_\alpha$ of \eqref{eq:G}
vanish after setting $t_{\alpha,j}=r_{\alpha,j}$.
Recall that $R_{\alpha\beta}$ is the remainder obtained in
Section~\ref{sec:finite} by dividing the $S$-polynomial
$S_{\alpha\beta}$ by the ordered list $(G_1,\ldots,G_r)$.
The division identity expresses $S_{\alpha\beta}-R_{\alpha\beta}$
as a sum of multiples of the $G_\gamma$. Since the $S$-polynomial
and every $G_\gamma$ vanish in $A_R$, so does its remainder.
Since this remainder is a linear combination of standard monomials
whose images form an $R$-basis, each of its coefficients is zero.
Thus the assignment $t_{\alpha,j}\mapsto r_{\alpha,j}$ annihilates
$J_{\mathrm{coeff}}$ and these values
define a graded map $B_0\to R$ and an isomorphism of pair
deformations $\mathcal A_0\otimes_{B_0}R\xrightarrow{\sim}A_R$
sending $U$ to $U_R$.
Conversely, every such map gives a pair deformation by the
flatness in \eqref{eq:adaptedfree}. An isomorphism of pairs preserves
$U_R$ and all the other
uniquely determined generator lifts, hence all the coefficients
$r_{\alpha,j}$. The classifying map is unique.

The algebraic family already classifies pair deformations over
every allowed Artinian base $R$. We use the completion
$\widehat B_0$ to express this universal property over a complete
local parameter algebra, as required for prorepresentability.
Indeed, every graded map $B_0\to R$ sends $\mathfrak m_0$ into
$\mathfrak m_R$. If $\mathfrak m_R^N=0$, it factors through
$B_0/\mathfrak m_0^N$ and extends uniquely to a continuous local
graded map $\widehat B_0\to R$. Restriction to $B_0$ gives the
inverse correspondence, and pullback of $\widehat\pi_0$ gives
the same pair deformation. Under a graded base map $R\to R'$,
the coefficients $r_{\alpha,j}$ in the displayed expressions for
$g_\alpha(\widetilde x,U_R)$ map to their images in $R'$.
The standard monomials remain a basis after base change, so these
are precisely the coefficients determining the classifying map
of the pulled-back pair. Hence the correspondence is natural in
$R$, giving the bijection
$h_{\widehat B_0}(R)\xrightarrow{\sim}\Def_m^{(X,V(\chi^m))}(R)$.\qedhere
\end{proof}

\subsection{Universality for \texorpdfstring{$\Def_m^X$}{Def of X}}\label{subsec:minimal}

We first show that forgetting the divisor gives a formally versal
family of $X$, and then eliminate the redundancy in the choice
of divisor to obtain a formally universal family.

\begin{proposition}\label{prop:forgetful-smooth}
The forgetful morphism
\[
 \Def_m^{(X,V(\chi^m))}\longrightarrow\Def_m^X
\]
is formally smooth. Consequently, the family $\widehat\pi_0$
over $\widehat B_0$, with its divisor forgotten, is formally
versal for $\Def_m^X$, and its Kodaira--Spencer map is surjective
in every degree.
\end{proposition}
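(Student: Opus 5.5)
Given a small extension $R'\twoheadrightarrow R$ in $\operatorname{Art}_m$, a pair deformation $(A_R,I_R)$, and a deformation $A_{R'}$ of $X$ over $R'$ with $A_{R'}\otimes_{R'}R\simeq A_R$, we have to find a homogeneous ideal $I_{R'}\subset A_{R'}$ that restricts to $I_R$ and makes $(A_{R'},I_{R'})$ a pair deformation. By Lemma~\ref{lem:divisor-lifts}, pair structures on a fixed graded deformation correspond bijectively, and compatibly with base change, to homogeneous lifts of $\chi^m$ in degree $m$. Write $I_R=(U_R)$, where $U_R\in(A_R)_m$ is the unique homogeneous generator lifting $\chi^m$. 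It therefore suffices to lift $U_R$ to a homogeneous element of $(A_{R'})_m$.

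This lift exists because the surjection $A_{R'}\to A_R$ is graded, so it is surjective on each homogeneous component, and in particular $(A_{R'})_m\to(A_R)_m$ is onto. Choose any preimage $U_{R'}\in(A_{R'})_m$ of $U_R$. Its reduction to the special fibre is the image of $U_R$, namely $\chi^m$. The converse part of Lemma~\ref{lem:divisor-lifts} then shows that $I_{R'}=(U_{R'})$ is a homogeneous ideal with $A_{R'}/I_{R'}$ flat over $R'$ and special fibre $(\chi^m)$. Base change gives $I_{R'}A_R=(U_R)=I_R$, and the identification with $A_R$ is the given one. The triple $(A_{R'},I_{R'},\iota_{R'})$ is thus an element of $\Def_m^{(X,V(\chi^m))}(R')$ mapping to the prescribed pair in the fibre product, which proves that the forgetful map is formally smooth.

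For the consequences, Proposition~\ref{prop:marked-pair} gives $h_{\widehat B_0}\simeq\Def_m^{(X,V(\chi^m))}$, and this transformation is an isomorphism, hence formally smooth. A composite of formally smooth transformations is formally smooth: given a compatible pair of data for the composite, we first lift through the forgetful map and then through the isomorphism. So $h_{\widehat B_0}\to\Def_m^X$ is formally smooth, which means the family with its divisor forgotten is formally versal. Surjectivity of the Kodaira--Spencer map in degree $jm$ follows by applying smoothness to the small extension $D_j\twoheadrightarrow\C$. Every element of $\Def_m^X(D_j)\simeq T_X^1(-jm)$ lifts to a map $\widehat B_0\to D_j$, and such a map factors through $(\mathfrak m/\mathfrak m^2)_{jm}$.

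The only delicate point is checking that the lift in degree $m$ really gives a pair deformation over $R'$, including flatness of the quotient. This is exactly what the converse part of Lemma~\ref{lem:divisor-lifts} supplies, so I expect no real obstacle.
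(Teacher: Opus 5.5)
Your proposal is correct and follows the paper's proof essentially step for step. You lift $U_R$ to a homogeneous $U_{R'}\in(A_{R'})_m$, use the converse part of Lemma~\ref{lem:divisor-lifts} for flatness of the quotient and compatibility with the restriction to $R$, compose with the isomorphism $h_{\widehat B_0}\simeq\Def_m^{(X,V(\chi^m))}$ of Proposition~\ref{prop:marked-pair} to get versality, and apply the lifting property to $D_j\twoheadrightarrow\C$ for surjectivity of the Kodaira--Spencer map.
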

\begin{proof}
Let $R'\twoheadrightarrow R$ be a small graded extension,
let $(A_R,I_R)$ be a pair deformation, and let $A_{R'}$ be a
deformation of $X$ with a prescribed identification
$A_{R'}\otimes_{R'}R\simeq A_R$.
By Lemma~\ref{lem:divisor-lifts}, write $I_R=(U_R)$ with
$U_R$ a homogeneous lift of $\chi^m$. Choose a homogeneous lift
$U_{R'}\in(A_{R'})_m$ of $U_R$. The same lemma shows that
$A_{R'}/(U_{R'})$ is $R'$-flat and that the pair
$(A_{R'},(U_{R'}))$ restricts to $(A_R,I_R)$. This proves
formal smoothness.

Proposition~\ref{prop:marked-pair} identifies the source functor
with $h_{\widehat B_0}$, so formal versality follows.
Applying the lifting property to $D_j\twoheadrightarrow\C$
gives surjectivity of the Kodaira--Spencer map in degree $-jm$.
\end{proof}

After forgetting the divisor, any other homogeneous lift of $\chi^m$
has the form $U'_R=U_R+\lambda$, with $\lambda\in R_m$:
indeed, $(\mathfrak m_RA_R)_m=R_m$, since $(A_R)_0=\C$ and
$(1-j)m\notin S$ for $j\ge2$.
For the polynomials in \eqref{eq:finiteeta}, this gives
\[
 p'_\beta(U')=p_\beta(U'-\lambda),\qquad
 t'_{\beta,1}=t_{\beta,1}-\eta_{\Z,\beta}\lambda
 \quad(\eta_{\Z,\beta}>0).
\]
If $B_0=\C$, set $B=\C$ and $\mathcal A=A$.
For the following normalization, we may therefore assume $B_0\ne\C$.
Choose any $\beta_0$ with $\eta_{\Z,\beta_0}>0$.
The unique choice $\lambda=t_{\beta_0,1}/\eta_{\Z,\beta_0}$
makes $t'_{\beta_0,1}=0$, so we restrict to
\begin{equation}\label{eq:translation-condition}
 t_{\beta_0,1}=0.
\end{equation}

Over $B_0[\lambda]$, the binomial formula gives an invertible
change from the basis $(U^qN_b)$ to $((U+\lambda)^qN_b)$,
with inverse obtained by replacing $\lambda$ with $-\lambda$.
Thus the changed equations still have a standard-monomial basis,
so their $S$-polynomial remainders vanish. The new coefficients
$t'=(t'_{\beta,j})$ of the shifted polynomials $p'_\beta$ therefore
satisfy all equations defining $B_0$: substituting $t=t'$ into
any generator of $J_{\mathrm{coeff}}$ gives zero in $B_0[\lambda]$.
Each coefficient tuple is uniquely recovered from its tuple on
\eqref{eq:translation-condition} and
$t_0=t_{\beta_0,1}/\eta_{\Z,\beta_0}$: the inverse substitutes
$U+t_0$ in the restricted polynomials. Hence
\begin{equation}\label{eq:principalproduct}
 \begin{gathered}
 B_0\simeq B[t_0],\qquad B=B_0/(t_{\beta_0,1}),\qquad
 t_0=t_{\beta_0,1}/\eta_{\Z,\beta_0},\\
 \mathcal A=\mathcal A_0\otimes_{B_0}B.
 \end{gathered}
\end{equation}
A different choice of $\beta_0$ gives an isomorphic base and
family of $X$ by the same substitutions.
With $U'=U+t_0$, the universal pair family is the pullback of
$\mathcal A/B$ with divisor $V(U'-t_0)$.

Let $\mathfrak m_B$ be the homogeneous maximal ideal of $B$ and put
\[
 \widehat{\mathcal A}
 =\varprojlim_n\mathcal A/\mathfrak m_B^{n+1}\mathcal A,
 \qquad
 \widehat\pi:\operatorname{Spf}\widehat{\mathcal A}
 \longrightarrow\operatorname{Spf}\widehat B.
\]
Then $\widehat B_0\simeq\widehat B[[t_0]]$; after forgetting the
divisor, $\widehat\pi_0$ is the pullback of $\widehat\pi$.

\begin{corollary}\label{cor:universal-X}
The deformation family $\widehat\pi$ is formally universal for
$\Def_m^X$.
\end{corollary}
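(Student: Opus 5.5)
We prove that for every $R\in\operatorname{Art}_m$ the map $h_{\widehat B}(R)\to\Def_m^X(R)$, given by pulling back $\widehat\pi$, is a bijection natural in $R$. Naturality is automatic for pullbacks. So there are two steps: surjectivity, then injectivity.

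\emph{Surjectivity.} Start from a deformation $(A_R,\iota_R)$ of $X$. By Proposition~\ref{prop:forgetful-smooth}, applied to the small extensions $R\twoheadrightarrow R/E\twoheadrightarrow\cdots\twoheadrightarrow\C$ (or directly, by choosing any homogeneous lift $U_R$ of $\chi^m$ and using Lemma~\ref{lem:divisor-lifts}), we upgrade it to a pair deformation $(A_R,(U_R))$. Proposition~\ref{prop:marked-pair} then gives a classifying map $\widehat B_0\to R$. By the decomposition $\widehat B_0\simeq\widehat B[[t_0]]$, this restricts to a map $\widehat B\to R$. After forgetting the divisor, $\widehat\pi_0$ is the pullback of $\widehat\pi$, so this restriction classifies $A_R$.

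\emph{Injectivity.} Take $\varphi,\psi:\widehat B\to R$ whose pullbacks are isomorphic via a graded isomorphism $\Phi:A_\varphi\to A_\psi$. We carry this out concretely with the normalized lift. In $A_\varphi=\mathcal A\otimes_B R$ the element $U$ maps to a homogeneous lift of $\chi^m$. From it we obtain the pair $(A_\varphi,(U))$, which has classifying map $\varphi\otimes(t_0\mapsto0)$ on $\widehat B_0=\widehat B[[t_0]]$. Transport this along $\Phi$: the pair $(A_\psi,(\Phi(U)))$ is isomorphic to it, so by uniqueness in Proposition~\ref{prop:marked-pair} its classifying map is again $\varphi\otimes(t_0\mapsto 0)$. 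On the other hand, $\Phi(U)$ is a lift of $\chi^m$ in $A_\psi$. Since $(\mathfrak m_RA_\psi)_m=R_m$, it equals $U+\lambda$ with $\lambda\in R_m$. The pair $(A_\psi,(U+\lambda))$ is obtained from the normalized pair $(A_\psi,(U))$ by the translation $U'=U+\lambda$. Its coefficients are therefore the shifted ones, $p'_\beta(U')=p_\beta(U'-\lambda)$, so its classifying map is $\psi\otimes(t_0\mapsto -\lambda)$, up to the sign convention fixed in \eqref{eq:principalproduct}.

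Comparing the two descriptions under $\widehat B_0\simeq\widehat B[[t_0]]$ gives $\lambda=0$ and $\psi=\varphi$. The key point is that the isomorphism \eqref{eq:principalproduct} is exactly the statement that a coefficient tuple is determined uniquely by its normalized tuple together with $t_0$. Hence the translation action of $\lambda$ on classifying maps of $\widehat B_0$ corresponds to moving in the $t_0$-coordinate alone, and does not change the $\widehat B$-component.

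\emph{Main obstacle.} The hardest step is making this compatibility precise. We must check that the isomorphism $B_0\simeq B[t_0]$, constructed by substituting $U+t_0$, intertwines the change of divisor generator on the functor side with translation of $t_0$ on the base side, and that this holds functorially over every $R$ and not only at the universal level. I would verify it by writing both classifying maps explicitly through the coefficients $r_{\alpha,j}$ in the expansion of $g_\alpha(\widetilde x,U_R)$. For this I use that the $\widetilde x_i$ are unique (as shown in the proof of Proposition~\ref{prop:marked-pair}), so only $U_R$ varies. The binomial change of basis then gives the shifted coefficients directly. Once this is done, the formula $\lambda=t_{\beta_0,1}$-coefficient$/\eta_{\Z,\beta_0}$ shows that the normalized lift is intrinsic: it is the unique lift with vanishing $t_{\beta_0,1}$-coefficient. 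It follows that $\Phi$ must preserve it.
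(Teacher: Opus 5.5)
Your proof is correct and follows essentially the same route as the paper. You lift $\chi^m$ to obtain a pair, classify it by Proposition~\ref{prop:marked-pair}, and use that an isomorphism can only translate $U$ by some $\lambda\in R_m$, which the normalization $t_{\beta_0,1}=0$ absorbs into the $t_0$-coordinate of $\widehat B_0\simeq\widehat B[[t_0]]$.

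The compatibility you single out as the main obstacle is already contained in the construction preceding the corollary: the universal pair family is the pullback of $\mathcal A/B$ with divisor $V(U'-t_0)$. Hence your comparison of $\varphi\otimes(t_0\mapsto0)$ with $\psi\otimes(t_0\mapsto-\lambda)$ closes the argument as written. Only the degenerate case $B_0=\C$ needs a separate one-line remark, since there is no $t_0$ there; it is trivial because $h_{\C}(R)$ is a point.
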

\begin{proof}
Every equivariant deformation of $X$ admits a
homogeneous lift $U_R$ of $\chi^m$, and
$(A_R,(U_R))$ is a deformation of the pair by
Lemma~\ref{lem:divisor-lifts}.
Proposition~\ref{prop:marked-pair} gives a classifying map from
$\widehat B_0$, and the preceding construction shows that its
underlying deformation is induced from $\widehat B$.
Suppose two such maps give isomorphic deformations.
The isomorphism fixes every other minimal generator lift and
sends $U$ to $U+\lambda$, with $\lambda\in R_m$.
The normalization above assigns the same normalized coefficient
tuple to tuples related by such a translation. Thus the two maps
agree on all coefficient parameters and are equal. This proves
$h_{\widehat B}\simeq\Def_m^X$.
\end{proof}

\begin{example}\label{ex:eighth-generator-change}
Return to Example~\ref{ex:eighth}. Choose $\beta_0=2$;
since $p_2(U)=U+t_{2,1}$, the required shift is
$t_0=t_{2,1}$ and $U'=U+t_0$.
Substituting $U=U'-t_0$ gives
\[
 z_1^2=(U'+t_{1,1}-t_0)z_0,\qquad z_3^2=U'z_4;
\]
the remaining equations use $p_\alpha(U'-t_0)$ in the same way.
Using their coefficients as the remaining parameters gives
\[
 B=B_0/(t_{2,1}),\qquad B_0\simeq B[t_{2,1}].
\]
\end{example}

\subsection{Comparison with hulls}\label{subsec:usual-miniversal}

We now work on $\operatorname{Art}$, the category of local Artinian
$\C$-algebras with residue field $\C$ and local $\C$-algebra
homomorphisms.
The functors $\Def^X:\operatorname{Art}\to\mathrm{Sets}$ and,
when $m\in S$, $\Def^{(X,V(\chi^m))}:\operatorname{Art}\to\mathrm{Sets}$
are defined by the data and conditions of Definition~\ref{def:pair}
after omitting all grading requirements (see, e.g., \cite[Definition~3.1]{FilipPair}
for pairs).
Their $R$-algebra isomorphisms respect the special-fibre identifications
and, for pairs, preserve the divisor ideal.

Formal versality and miniversality are defined as in
Subsection~\ref{subsec:pairs}, with $\operatorname{Art}_m$ replaced
by $\operatorname{Art}$ and all grading requirements omitted.

For $Y=X$ or $Y=(X,V(\chi^m))$, write $\Def^Y$ for the
corresponding functor on $\operatorname{Art}$, with no torus
actions prescribed. Write $\Def_m^Y:\operatorname{Art}_m\to\mathrm{Sets}$
for the corresponding functor $\Def_m^X$ or
$\Def_m^{(X,V(\chi^m))}$ of Definition~\ref{def:pair}.

For a complete local Noetherian $\C$-algebra $H$ with residue
field $\C$, let $h_H(R)$ be the set of continuous local
$\C$-algebra homomorphisms $H\to R$, with no grading condition.
A \emph{hull} of $\Def^Y$ is a formally smooth natural
transformation $h_H\to\Def^Y$ inducing an isomorphism on tangent
spaces, that is, a bijection on $\C[\epsilon]/(\epsilon^2)$
\cite[Tag~06G8]{Stacks}.
Equivalently, it is a miniversal formal deformation of $Y$ over
$H$. We call $H$ its \emph{hull ring}. A hull need not
prorepresent $\Def^Y$: classifying maps need not be unique.

Suppose $\Def^Y$ admits a hull. Since $\mathbb T$ is linearly
reductive, Rim's theorem \cite{Rim} allows us to choose its
miniversal formal family torus-equivariantly. Denote the hull
ring by $H_Y$. Homogeneous lifts of a weight basis of
$\mathfrak m_{H_Y}/\mathfrak m_{H_Y}^2$ give a presentation
\[
 H_Y=\C[[t_1,\ldots,t_N]]/J_Y,
 \qquad J_Y\subset(t_1,\ldots,t_N)^2,
\]
with homogeneous minimal parameters.
Write
\begin{equation}\label{eq:ordinary-ray-base}
 H_{Y,m}=H_Y/(t_i:\deg t_i\notin\Z_{>0}m)
\end{equation}
for the closed formal subspace obtained by setting all other
parameters to zero. Restricting to this subspace is the meaning
of miniversality in degrees $-jm$ used in
\cite[Remark~5.1]{FilipPair}.

\begin{corollary}\label{cor:ordinary-pair}
If $\Def^{(X,V(\chi^m))}$ admits a hull with ring
$H_{(X,V(\chi^m))}$, chosen torus-equivariantly as above, then
$\widehat B_0\simeq H_{(X,V(\chi^m)),m}$.
Under this isomorphism, $\widehat\pi_0$ is the restriction of
the corresponding miniversal formal family to the parameter
degrees $jm$, $j\ge1$.
\end{corollary}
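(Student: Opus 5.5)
We first compare the restricted hull with the graded functor $\Def_m^{(X,V(\chi^m))}$, and then identify both with $\widehat B_0$ through the universal property of Proposition~\ref{prop:marked-pair}. Fix the torus-equivariant hull $h_{H}\to\Def^{(X,V(\chi^m))}$, with $H=H_{(X,V(\chi^m))}$ presented as $\C[[t_1,\ldots,t_N]]/J_Y$ with homogeneous parameters. Any $R\in\operatorname{Art}_m$ is in particular an object of $\operatorname{Art}$. The grading of $R$ is concentrated in degrees $0$ and $jm$, $j\ge1$. Hence every continuous \emph{graded} map $H\to R$ kills each parameter $t_i$ with $\deg t_i\notin\Z_{>0}m$, and so factors through $H_m:=H_{(X,V(\chi^m)),m}$. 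In particular $H_m$ is itself a complete local graded algebra of the form required in Subsection~\ref{subsec:pairs}, and the restricted family defines a natural transformation $h_{H_m}\to\Def_m^{(X,V(\chi^m))}$ on $\operatorname{Art}_m$.

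Next we show that this graded transformation is formally smooth and bijective on tangent spaces in every degree $-jm$. For tangent spaces we take degree-$jm$ components. The hull's tangent bijection onto the ungraded tangent space is torus-equivariant, and homogeneous components of the first-order pair deformations recover $\Def_m^{(X,V(\chi^m))}(D_j)$ by the same exactness remark used for $T_X^1(-jm)$. So $(\mathfrak m_{H_m}/\mathfrak m_{H_m}^2)_{jm}^\ast\to \Def_m^{(X,V(\chi^m))}(D_j)$ is bijective.

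For formal smoothness, take a graded small extension $R'\to R$, a graded map $H_m\to R$ and a graded deformation over $R'$ restricting to the induced one. The hull gives some lift $H\to R'$, possibly non-graded. This step is the main obstacle: we must produce a \emph{graded} lift. Two routes are available.

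The first route is Rim's equivariant argument. The set of lifts is a torsor under a tangent-type vector space, and the torus acts compatibly, since all data are equivariant. Because $\mathbb T$ is linearly reductive, we can average, i.e.\ project onto the invariant part, to obtain a $\mathbb T$-fixed lift, which is then graded. Replacing $H\to R'$ by its composite with $H\to H_m$ is harmless, since graded lifts kill the other parameters.

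The second route avoids this. Since $\widehat B_0$ prorepresents $\Def_m^{(X,V(\chi^m))}$ (Proposition~\ref{prop:marked-pair}), the graded deformation over $R'$ already has a unique graded classifying map from $\widehat B_0$. So it suffices to produce graded maps between $H_m$ and $\widehat B_0$ in both directions. We would use the first route if the torsor description is clean, and otherwise fall back on the comparison of the second.

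Having both ingredients, we conclude by a standard hull-comparison argument. Universality gives a graded map $\phi:\widehat B_0\to H_m$ classifying the restricted family; more precisely, $\phi$ is a compatible system over the Artinian quotients $H_m/\mathfrak m^{n+1}$, which lie in $\operatorname{Art}_m$. Graded formal smoothness of $h_{H_m}\to\Def_m$ gives a graded map $\psi:H_m\to\widehat B_0$ pulling the restricted family back to $\widehat\pi_0$, constructed inductively along $\widehat B_0/\mathfrak m_0^{n+1}$. Then $\phi\circ\psi$ classifies the restricted family over $H_m$ and induces the identity on tangent spaces, since both Kodaira--Spencer maps are bijective. An endomorphism of a complete local Noetherian ring inducing an isomorphism on cotangent spaces is surjective, and hence an isomorphism. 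Similarly, $\psi\circ\phi$ is the identity of $\widehat B_0$ by the uniqueness of classifying maps. So $\phi$ is an isomorphism, and by construction it carries $\widehat\pi_0$ to the restricted miniversal family.
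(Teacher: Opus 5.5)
Your overall plan matches the paper's: restrict the equivariant hull to $H_m$, show that the restricted family is formally versal for $\Def_m^{(X,V(\chi^m))}$ with bijective Kodaira--Spencer map, and compare with $\widehat B_0$ using Proposition~\ref{prop:marked-pair}. Your final comparison paragraph is correct; getting $\psi\circ\phi=\mathrm{id}$ from uniqueness of classifying maps is slightly sharper than the paper's ``both composites are automorphisms''. The tangent step also works once you note that $J_Y\subset(t_1,\ldots,t_N)^2$, so that $(\mathfrak m_{H_m}/\mathfrak m_{H_m}^2)_{jm}=(\mathfrak m_H/\mathfrak m_H^2)_{jm}$.

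There is a genuine gap in the graded formal smoothness, which is the step you yourself flag as the main obstacle. In your first route, the lifts $H\to R'$ of the fixed graded map $H\to R$ form an affine space under $\operatorname{Hom}(\mathfrak m_H/\mathfrak m_H^2,E)$, and a $\mathbb T$-fixed point does give a graded lift. But averaging ranges over all lifts, not over those that classify the prescribed deformation $\xi'$ over $R'$, so the fixed lift need not pull back to $\xi'$. If you meant only the lifts classifying $\xi'$, you would still have to show that this subset is a $\mathbb T$-stable affine subspace, which you do not.

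Two further steps are needed, and the paper supplies both. First, keep the graded identification over $R$ fixed. The difference between the pullback along the graded lift and $\xi'$ is then a $\mathbb T$-invariant class in $E\otimes T^1_Y$, where $Y=(X,V(\chi^m))$. The equivariant Kodaira--Spencer isomorphism cancels it by an invariant element of $\operatorname{Hom}(\mathfrak m_H/\mathfrak m_H^2,E)$, so the corrected lift stays graded. Second, $\Def_m^{(X,V(\chi^m))}$ uses graded isomorphism classes, so the resulting isomorphism must be made equivariant. This holds because the isomorphisms extending the fixed identification form a torsor under $E\otimes T^0_Y$ with compatible $\mathbb T$-action, and linear reductivity gives a fixed one.

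Your second route does not avoid this. A graded map $H_m\to\widehat B_0$ pulling the restricted family back to $\widehat\pi_0$ is itself produced by graded formal smoothness of $h_{H_m}\to\Def_m^{(X,V(\chi^m))}$, exactly as your last paragraph uses it. Falling back on it is therefore circular.
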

\begin{proof}
Put $Y=(X,V(\chi^m))$. We first check equivariant lifting;
the same argument applies to $Y=X$.
Fix equivariant lifting data across a small graded extension
$R'\twoheadrightarrow R$ with kernel $E$.
Formal smoothness of the hull transformation
$h_{H_Y}\to\Def^Y$ supplies a lift of the prescribed base map
$H_Y\to R$. The lifts $H_Y\to R'$ form an affine space:
their differences are precisely the linear maps
$\mathfrak m_{H_Y}/\mathfrak m_{H_Y}^2\to E$.
Linear reductivity gives a fixed point for the induced
$\mathbb T$-action, hence an equivariant lift.
Keeping the identification over $R$ fixed, its difference from
the prescribed deformation lies in $(E\otimes_\C T_Y^1)^{\mathbb T}$.
The equivariant Kodaira--Spencer isomorphism supplies an invariant
correction cancelling this difference \cite{Rim}.
The comparison isomorphisms extending the fixed identification
form a torsor under $E\otimes_\C T_Y^0$, so linear reductivity
also supplies an equivariant isomorphism.
Here $T_Y^1$ and $T_Y^0$ denote the tangent and
infinitesimal automorphism spaces of the deformation problem for $Y$; for pairs, $T_Y^0$ consists
of derivations preserving the divisor ideal. All these modules
are rational $\mathbb T$-modules.

Every graded map $H_Y\to R$, with $R\in\operatorname{Art}_m$,
factors through $H_{Y,m}$. Thus the restricted family is formally
versal for $\Def_m^Y$. Since $J_Y\subset(t_1,\ldots,t_N)^2$,
its Kodaira--Spencer map is an isomorphism onto
$\bigoplus_{j\ge1}T_Y^1(-jm)$, so it is miniversal.

Set $C=H_{Y,m}$ and $D=\widehat B_0$.
Universality in Proposition~\ref{prop:marked-pair} gives a graded
classifying map $D\to C$. Equivariant versality gives compatible
classifying maps $C\to D/\mathfrak m_D^n$, hence a map $C\to D$.
Both induce tangent-space isomorphisms, so their composites are
automorphisms of complete Noetherian local rings.
Consequently the bases are isomorphic, and the classifying
isomorphisms identify their formal families.
\end{proof}

\begin{corollary}\label{cor:ordinary-X}
If $\Def^X$ admits a hull with ring $H_X$, chosen
torus-equivariantly as above, then $\widehat B\simeq H_{X,m}$,
and $\widehat\pi$ is the restriction of the corresponding
miniversal formal family to the parameter degrees $jm$, $j\ge1$.
\end{corollary}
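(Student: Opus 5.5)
The plan is to copy the argument of Corollary~\ref{cor:ordinary-pair} with $Y=X$, replacing Proposition~\ref{prop:marked-pair} by Corollary~\ref{cor:universal-X}.

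\emph{Equivariant choice and restriction.} Using Rim's theorem, choose a torus-equivariant miniversal formal family over the hull ring $H_X=\C[[t_1,\ldots,t_N]]/J_X$. Take the minimal parameters $t_i$ homogeneous and arrange $J_X\subset(t_1,\ldots,t_N)^2$. The equivariant lifting argument in the first paragraph of the proof of Corollary~\ref{cor:ordinary-pair} works verbatim for $Y=X$, with $T_X^0$ the derivations of $A$ and $T_X^1$ the usual tangent space. It shows that across any small graded extension $R'\twoheadrightarrow R$ in $\operatorname{Art}_m$, prescribed equivariant data lift to a graded map $H_X\to R'$ together with an equivariant comparison isomorphism.

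Any graded map $H_X\to R$ with $R\in\operatorname{Art}_m$ kills every parameter whose degree is not in $\Z_{>0}m$. Hence it factors through $H_{X,m}$. So the restricted family over $C=H_{X,m}$ is formally versal for $\Def_m^X$. Because $J_X$ lies in the square of the maximal ideal, the cotangent space of $C$ in degree $jm$ is dual to $T_X^1(-jm)$. Therefore the Kodaira--Spencer map of the restricted family is bijective in each degree $-jm$, and the restricted family is miniversal for $\Def_m^X$.

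\emph{Comparison maps.} Set $D=\widehat B$. By Corollary~\ref{cor:universal-X}, $h_D\simeq\Def_m^X$. Applying this to the truncations of the restricted family over $C/\mathfrak m_C^n$ gives compatible unique graded classifying maps, hence a continuous graded map $D\to C$. Equivariant versality of $C$, applied inductively to $D/\mathfrak m_D^{n}$ along the small extensions $D/\mathfrak m_D^{n+1}\to D/\mathfrak m_D^n$, gives compatible maps $C\to D/\mathfrak m_D^n$. Their limit is a map $C\to D$ under which $\widehat\pi$ is pulled back from the restricted family.

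\emph{Main obstacle.} The delicate point is that both maps induce isomorphisms on cotangent spaces. For $C$ this holds by the previous paragraph. For $D$ it needs $\operatorname{KS}_{\widehat B,j}$ to be bijective, and this follows from universality in Corollary~\ref{cor:universal-X}, since $h_D(D_j)\simeq\Def_m^X(D_j)\simeq T_X^1(-jm)$.

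Given this, both composites $C\to D\to C$ and $D\to C\to D$ are endomorphisms of complete Noetherian local rings that induce the identity on $\mathfrak m/\mathfrak m^2$. Such endomorphisms are surjective, and a surjective endomorphism of a Noetherian ring is an automorphism. Hence $\widehat B\simeq H_{X,m}$. The classifying maps identify $\widehat\pi$ with the restricted miniversal family, up to the isomorphism of deformations provided by the classifying property.
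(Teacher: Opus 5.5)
Your proposal is correct and follows the paper's proof. The paper likewise takes the lifting property and the bijective Kodaira--Spencer map for $\widehat B$ from Corollary~\ref{cor:universal-X}, transfers the equivariant-lifting and miniversality argument of Corollary~\ref{cor:ordinary-pair} to the restricted hull $C=H_{X,m}$, and concludes with the same two-way comparison between $C$ and $D=\widehat B$. Your stronger claim that the composites induce the identity on cotangent spaces is also valid, since it follows from bijectivity of the Kodaira--Spencer maps; the paper needs only that each map is a tangent-space isomorphism.
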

\begin{proof}
Corollary~\ref{cor:universal-X} gives the lifting property and a
bijective Kodaira--Spencer map. The restriction of the chosen miniversal formal family over
$H_X$ has the same properties, by the argument of
Corollary~\ref{cor:ordinary-pair}. The comparison argument at the end of that proof,
with $D=\widehat B$, gives the assertion.
This comparison applies to any primitive $m$ once
$\widehat B$ prorepresents $\Def_m^X$, so together with
Theorem~\ref{thm:main} it proves Theorem~\ref{thm:usual-miniversality}.
\end{proof}

\begin{corollary}\label{cor:ordinary-isolated}
Suppose $X$ is an isolated Gorenstein toric singularity of
dimension at least three and $m=R^*$. Then $\widehat\pi$ is
formally miniversal for $\Def^X$.
\end{corollary}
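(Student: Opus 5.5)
The plan is to reduce the statement to Corollary~\ref{cor:ordinary-X} and show that, in the isolated Gorenstein setting, restricting to parameter degrees $jm$ with $m=R^*$ discards nothing. The steps are as follows.

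\emph{Step 1: existence of a hull.} Since $X$ is an isolated singularity, $T_X^1$ is finite-dimensional. Schlessinger's criterion then shows that $\Def^X$ admits a hull. By Rim's theorem \cite{Rim}, the miniversal formal family over the hull ring $H_X$ can be chosen torus-equivariantly, with homogeneous minimal parameters $t_1,\ldots,t_N$. Their degrees are exactly the negatives of the weights occurring in $T_X^1$, counted with multiplicity.

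\emph{Step 2: concentration of $T^1_X$.} I would invoke Altmann's computation \cite{Altmann}: for an isolated toric Gorenstein singularity of dimension at least three, $T_X^1=T_X^1(-R^*)$. Here $R^*$ is primitive and lies in $S$. Consequently every $t_i$ has degree $R^*\in\Z_{>0}m$. The ideal defining $H_{X,m}$ in \eqref{eq:ordinary-ray-base} is therefore zero, and $H_{X,m}=H_X$.

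\emph{Step 3: conclusion.} Corollary~\ref{cor:ordinary-X} gives $\widehat B\simeq H_{X,m}=H_X$, and it identifies $\widehat\pi$ with the restriction of the miniversal family, which by Step 2 is the whole family. Hence $\widehat\pi$ itself is a miniversal formal deformation of $X$, that is, formally versal for $\Def^X$ with bijective Kodaira--Spencer map. As a side remark, since only the degree $-R^*=-m$ occurs, $B$ has its parameters in degree $m$. For $j\ge2$ the graded pieces $T_X^1(-jm)$ vanish, consistent with the tangent space being $T_X^1(-m)$.

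\emph{Main obstacle.} The only delicate point is Step 1: making sure the hypotheses of Corollary~\ref{cor:ordinary-X} are actually met, namely that a hull exists and that its degree bookkeeping matches. Existence requires finite-dimensionality of the tangent space, which comes from isolatedness. The matching requires that the weights of a minimal homogeneous parameter system are exactly those of $T_X^1$, so that no degree outside $\Z_{>0}R^*$ appears. Given Altmann's vanishing statement, the rest follows formally from the preceding corollaries.
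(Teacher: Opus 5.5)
Your proposal is correct and follows the paper's proof: a hull exists, and Altmann's concentration $T_X^1=T_X^1(-R^*)$ gives $H_{X,m}=H_X$, so Corollary~\ref{cor:ordinary-X} identifies $\widehat\pi$ with the full miniversal family. The only difference is that you derive the existence of the hull from isolatedness via Schlessinger's criterion, whereas the paper cites \cite{Altmann} for both facts.
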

\begin{proof}
The functor $\Def^X$ admits a hull and
$T_X^1=T_X^1(-R^*)$ \cite{Altmann}. Thus $H_{X,m}=H_X$, and
Corollary~\ref{cor:ordinary-X} applies.
\end{proof}

\section{Components and lattice-friendly Minkowski decompositions}\label{sec:components}

In this section, we describe the reduced components of the universal
equivariant bases for the pair and for $X$ by lattice-friendly
Minkowski decompositions. Throughout, we assume that $m\in S$ is primitive.

\subsection{Minkowski decompositions and multiplication}\label{subsec:pair-multiplication}

Choose a sublattice $M\subset\LL$ such that $\LL=M\oplus\Z m$,
and put $N=\operatorname{Hom}_{\Z}(M,\Z)$.
In the dual coordinates $\LL_{\mathbb R}^*=N_{\mathbb R}\oplus\mathbb R$,
evaluation by $m$ is the last coordinate. The projection
$(v,r)\mapsto v$ identifies each hyperplane $[m=r]$ with
$N_{\mathbb R}$. Under these identifications, we write the slice
and its tail cone as
\[
 P=P_m=\{v\in N_{\mathbb R}:(v,1)\in\sigma\},\qquad
 \kappa=\{v\in N_{\mathbb R}:(v,0)\in\sigma\}.
\]

\begin{definition}\label{def:latticefriendly}
A rational Minkowski decomposition
\[
 \mathcal D:\qquad P=Q_1+\cdots+Q_k,\qquad
 \operatorname{tail}(Q_i)=\kappa,
\]
is \emph{lattice-friendly} if, for every $c\in\kappa^\vee\cap M$, all but at most one of the minimizing faces
\[
 \operatorname{face}(Q_i,c)
 =\{v\in Q_i:\langle c,v\rangle=\min_{w\in Q_i}\langle c,w\rangle\}
\]
contain a point of $N$. This is \cite[Definition 10.6]{ACFpoly}. Equivalently, for each vertex decomposition $v=v_1+\cdots+v_k$ induced by a minimizing linear form, at most one $v_i$ is outside $N$: one obtains the vertex condition by choosing a form in a full-dimensional normal cone, and the face condition by refining a minimizing face to a vertex.

Decompositions are considered up to permutation and integral translations
of the summands whose sum is zero. A summand $n+\kappa$ with $n\in N$
is \emph{trivial}. We always remove such a summand when another summand
$Q_j$ is present, replacing $(n+\kappa)+Q_j$ by $n+Q_j$.
A refinement is \emph{proper} if, after removing trivial summands,
it replaces at least one summand by two or more nontrivial summands.
A decomposition is \emph{maximal} if it admits no proper
lattice-friendly refinement.
\end{definition}

\begin{remark*}
Ilten--Vollmert \cite{IltenVollmert}
constructed homogeneous deformations of toric varieties in arbitrary
degree from admissible Minkowski decompositions. Their vertex
condition \cite[Definition 2.2]{IltenVollmert} agrees with the
lattice-friendly condition used here.
\end{remark*}

We express the multiplication already present in $\mathcal A_0$
in terms of its $B_0[U]$-basis from \eqref{eq:adaptedfree}.
The coefficients of these products are polynomials in $U$,
computed by division by the equations $G_\alpha$.
In the next subsection, their factorizations will recover
the Minkowski decompositions that describe the reduced components.

For the fixed splitting and slice, put
\[
 C=\kappa^\vee\cap M,\qquad
 \eta(c)=-\min_{v\in P_m}\langle c,v\rangle,\qquad
 \eta_{\Z}(c)=\lceil\eta(c)\rceil.
\]
Here $\eta$ is a rational piecewise linear function, and
$\eta_{\Z}$ is its integral rounding, as in \cite{ACF}.
For $c\in C$, the inequalities defining $\sigma^\vee$ say that
$(c,r)\in S$ exactly when $r\in\Z$ and $r\ge\eta(c)$. Thus
\begin{equation}\label{eq:etaS}
 S=\{(c,r):c\in C,\ r\ge\eta_{\Z}(c)\}.
\end{equation}
In particular, $(c,\eta_{\Z}(c))$ is the unique exponent of
$\partial_{\N m}S$ projecting to $c$. Write
$W_c=N_{(c,\eta_{\Z}(c))}$ for its standard monomial representative
from Section~\ref{sec:finite}. Thus the monomials $W_c$, which
contain no $U$, index the boundary basis by $C$.
Also, $C-C=M$, since $\kappa$ contains no line.
For $c,d\in C$, put
\[
 \eta_{\Z}(c,d)
 :=\eta_{\Z}(c)+\eta_{\Z}(d)-\eta_{\Z}(c+d)\ge0.
\]
With $u=\chi^m$ and $e_c=\chi^{(c,\eta_{\Z}(c))}$, addition
of exponents gives the central-fibre multiplication
\begin{equation}\label{eq:etamult}
 A=\bigoplus_{c\in C}\C[u]e_c,\qquad
 e_ce_d=u^{\eta_{\Z}(c,d)}e_{c+d},\qquad e_0=1.
\end{equation}

In $\mathcal A_0$, use $e_c$ also for the residue class of $W_c$.
By \eqref{eq:adaptedfree},
$\mathcal A_0=\bigoplus_{c\in C}B_0[U]e_c$.
Since $U$ and all base parameters have degrees in $\Z m$,
the product $e_ce_d$ is a unique polynomial multiple of $e_{c+d}$.
Homogeneity determines its possible powers of $U$, and
\eqref{eq:etamult} fixes its leading coefficient to $1$, since
$(B_0)_0=\C$. Thus
\begin{equation}\label{eq:F}
 e_ce_d=p_{c,d}(U)e_{c+d},\qquad
 p_{c,d}(U)=U^{\eta_{\Z}(c,d)}+\sum_{j=1}^{\eta_{\Z}(c,d)}t_{c,d,j}U^{\eta_{\Z}(c,d)-j},
 \quad t_{c,d,j}\in(B_0)_{jm}.
\end{equation}

\subsection{Polynomial factorization and components}\label{subsec:factorization-components}

We now apply this description of multiplication to the
Gr\"obner equations \eqref{eq:finiteeta}.
For a lattice-friendly decomposition $\mathcal D:P=Q_1+\cdots+Q_k$, put
\[
 \eta_{i,\Z}(c)=\left\lceil-\min_{v\in Q_i}\langle c,v\rangle\right\rceil.
\]
These functions are subadditive. Lattice-friendliness gives
$\eta_{\Z}(c)=\sum_i\eta_{i,\Z}(c)$, since all but at most
one of the minimizing values are integral.

For the $\beta$th Gr\"obner equation,
write $\LM(g_\beta)=x^{\boldsymbol\alpha_\beta}
=x_1^{\alpha_{\beta,1}}\cdots x_n^{\alpha_{\beta,n}}$,
where $\alpha_{\beta,j}$ is the exponent of $x_j$.
Put $\deg x_j=(c_j,\eta_{\Z}(c_j))$ and
$c_\beta=\sum_j\alpha_{\beta,j}c_j$, so $W_\beta=W_{c_\beta}$.
The exponent of $U$ in the central equation splits as
\[
\begin{aligned}
 n_{i\beta}&=\sum_j\alpha_{\beta,j}\eta_{i,\Z}(c_j)
                    -\eta_{i,\Z}(c_\beta)\ge0,\\
 \eta_{\Z,\beta}
 &=\sum_j\alpha_{\beta,j}\eta_{\Z}(c_j)-\eta_{\Z}(c_\beta)
 =\sum_i n_{i\beta}.
\end{aligned}
\]
Introduce parameters $\lambda_1,\ldots,\lambda_k$ of degree $m$
and replace $U^{\eta_{\Z,\beta}}$ by
$\prod_i(U-\lambda_i)^{n_{i\beta}}$.
Comparing coefficients with $p_\beta(U)$ in \eqref{eq:finiteeta}
defines the images of the original parameters:
\[
 U^{\eta_{\Z,\beta}}
 +\sum_{j=1}^{\eta_{\Z,\beta}}
       \psi_{\mathcal D}^*(t_{\beta,j})U^{\eta_{\Z,\beta}-j}
 =\prod_i(U-\lambda_i)^{n_{i\beta}}.
\]
It remains to check that these coefficients satisfy the equations
of $B_0$.

\begin{proposition}\label{prop:decomposition-family}
The assignment defines a graded homomorphism
$\psi_{\mathcal D}^*:B_0\to\C[\lambda_1,\ldots,\lambda_k]$.
The pullback of the pair family along
$\psi_{\mathcal D}:\mathbb A^k\to\Spec B_0$ is flat, with multiplication
\begin{equation}\label{eq:decomposition-family}
 e_ce_d=\prod_{i=1}^k(U-\lambda_i)^%
 {\eta_{i,\Z}(c)+\eta_{i,\Z}(d)-\eta_{i,\Z}(c+d)}e_{c+d}.
\end{equation}
\end{proposition}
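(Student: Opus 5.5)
Instead of checking the generators of $J_{\mathrm{coeff}}$ one by one, the plan is to build an explicit flat graded algebra over $\C[\lambda]=\C[\lambda_1,\ldots,\lambda_k]$ that satisfies the specialized equations $G_\alpha$, and then read off the homomorphism from the universal property. Define the free $\C[\lambda][U]$-module
\[
 \mathcal A_{\mathcal D}=\bigoplus_{c\in C}\C[\lambda][U]e_c,
\]
with multiplication given by \eqref{eq:decomposition-family}. Associativity and commutativity are immediate. The exponents $\eta_{i,\Z}(c)+\eta_{i,\Z}(d)-\eta_{i,\Z}(c+d)$ are nonnegative by subadditivity and form a $2$-cocycle in each $i$: both triple products $e_ce_de_f$ equal $\prod_i(U-\lambda_i)^{\eta_{i,\Z}(c)+\eta_{i,\Z}(d)+\eta_{i,\Z}(f)-\eta_{i,\Z}(c+d+f)}e_{c+d+f}$. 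We grade by $\deg e_c=(c,\eta_{\Z}(c))$, $\deg U=\deg\lambda_i=m$. Homogeneity then follows from $\sum_i\eta_{i,\Z}=\eta_{\Z}$, which is where lattice-friendliness enters. At $\lambda=0$ the product becomes $U^{\eta_{\Z}(c,d)}e_{c+d}$, again by $\sum_i\eta_{i,\Z}=\eta_{\Z}$, so the special fibre is $A$ by \eqref{eq:etamult}. The quotient by $U$ is free over $\C[\lambda]$ with basis $\overline{e_c}$. Hence $(\mathcal A_{\mathcal D},(U))$ is a flat graded pair family over $\mathbb A^k$.

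Next I would verify that the specialized $G_\beta$ vanish in $\mathcal A_{\mathcal D}$ under $x_j\mapsto e_{c_j}$ and $U\mapsto U$. Note that $\deg x_j$ has $U$-free boundary form, since the $x_j$ are generators different from $m$; also $\LM(g_\beta)$ contains no $U$, as shown in Section~\ref{sec:finite}. Iterating \eqref{eq:decomposition-family} gives
\[
 x^{\boldsymbol\alpha_\beta}\mapsto\prod_i(U-\lambda_i)^{n_{i\beta}}e_{c_\beta}
\]
with exactly the exponents $n_{i\beta}$ defined above. Also $W_\beta=W_{c_\beta}\mapsto e_{c_\beta}$, because $W_c$ is a $U$-free standard monomial and its image is computed the same way, with the requirement that the cocycle exponents compose correctly. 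Here I must be careful, since $W_c$ is a product of several $x_j$ and its image is $\prod_i(U-\lambda_i)^{(\cdots)}e_c$, where the exponent is $\sum\eta_{i,\Z}(c_j)-\eta_{i,\Z}(c)$ over the factors. Its total degree in $U$ is $0$ because $W_c$ has degree $(c,\eta_{\Z}(c))$ with no $U$. Each summand exponent is nonnegative and they sum to $0$, so every one vanishes and $W_c\mapsto e_c$. Hence the image of $G_\beta$ is $\bigl(\prod_i(U-\lambda_i)^{n_{i\beta}}-\psi^*_{\mathcal D}(p_\beta)(U)\bigr)e_{c_\beta}=0$ by the defining identity.

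Then the argument of Proposition~\ref{prop:marked-pair} applies. The elements $N_s(e_{c_j},U)$ lift a homogeneous basis, so they form a $\C[\lambda]$-basis; I would argue this directly by degree, which avoids needing an Artinian base. Each division identity expresses $R_{\alpha\beta}$ under the substitution as a combination of specialized $G_\gamma$, so it vanishes. Its coefficients vanish because the standard monomials are a basis. Therefore $\psi^*_{\mathcal D}$ kills $J_{\mathrm{coeff}}$, giving a graded map $B_0\to\C[\lambda]$. The induced surjection $\mathcal A_0\otimes_{B_0}\C[\lambda]\to\mathcal A_{\mathcal D}$ sends the basis $\{U^qN_b\}$ from \eqref{eq:adaptedfree} to a basis, so it is an isomorphism. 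This yields flatness of the pullback and the multiplication formula.

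The main obstacle is the basis statement over the non-Artinian ring $\C[\lambda]$. I would handle it degree by degree: each graded piece of $\mathcal A_{\mathcal D}$ in degree $(c,r)$ is $\bigoplus_q\C[\lambda]_{\ast}U^qe_c$, and $N_{(c,r)}\mapsto U^{r-\eta_{\Z}(c)}e_c$ by the vanishing-exponent argument above. The map is then visibly a unitriangular change of basis.
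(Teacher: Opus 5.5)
Your proof is correct, but it is organized differently from the paper's.

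\textbf{The paper's route.} The paper never builds the target algebra abstractly. It substitutes the parameters to get the specialized equations $\widetilde G_\beta$. It then computes the remainder of an arbitrary monomial $x^{\boldsymbol a}$ under division over $\C[\lambda_1,\ldots,\lambda_k][U]$ and shows that this remainder does not depend on the choices made during division. From this, Buchberger's criterion makes the $\widetilde G_\beta$ a monic Gr\"obner basis. The specialized $R_{\alpha\beta}$ then lie in the ideal and involve only standard monomials, so they vanish. Finally, the multiplication rule is read off by dividing $W_cW_d$.

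\textbf{Your route.} You start from the model algebra with multiplication \eqref{eq:decomposition-family}, where associativity is just the telescoping cocycle identity. You then run the universality argument of Proposition~\ref{prop:marked-pair} over the non-Artinian base $\C[\lambda]$. In place of Lemma~\ref{lem:basis}, you observe that $N_{(c,r)}=U^{r-\eta_{\Z}(c)}W_c$ maps exactly onto the defining basis element $U^{r-\eta_{\Z}(c)}e_c$. So your ``unitriangular'' change of basis is in fact a bijection of bases.

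\textbf{Common core.} Both arguments rest on the same identity: $\sum_j b_j\eta_{i,\Z}(c_j)=\eta_{i,\Z}(d)$ for every standard monomial $W_d=x^{\boldsymbol b}$. It comes from subadditivity together with $\sum_i\eta_{i,\Z}=\eta_{\Z}$.

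\textbf{What each buys.} Your version gives flatness and the multiplication formula by construction. It also avoids both the choice-independence argument and Buchberger's criterion over $\C[\lambda][U]$. The paper's version produces directly the Gr\"obner-basis statement and the explicit division remainders. Those computations are reused later, when $p_{c,d}$ is expressed as a product of the $p_\beta$ in Propositions~\ref{prop:recover} and~\ref{prop:componentimages}.
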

\begin{proof}
Put $\Lambda=\C[\lambda_1,\ldots,\lambda_k]$. Substitute for
$t_{\beta,j}$ the coefficient of $U^{\eta_{\Z,\beta}-j}$ in
$\prod_i(U-\lambda_i)^{n_{i\beta}}$. We verify Buchberger's
criterion for the resulting equations
\[
 \widetilde G_\beta
 =x^{\boldsymbol\alpha_\beta}
  -\prod_i(U-\lambda_i)^{n_{i\beta}}W_\beta.
\]
Apply the division procedure of Section~\ref{sec:finite} to a monomial
$x^{\boldsymbol a}$, treating $U$ as part of the coefficient ring
$\Lambda[U]$. At each step, replace a factor
$x^{\boldsymbol\alpha_\beta}$ by
$\prod_i(U-\lambda_i)^{n_{i\beta}}W_\beta$.
The monomial in the $x$-variables strictly decreases in the order
induced by Section~\ref{sec:finite}, so the process terminates at the
unique standard monomial $W_d$, where $d=\sum_j a_jc_j$.
Write $W_d=x^{\boldsymbol b}$. It represents
$\chi^{(d,\eta_{\Z}(d))}$, so equality of degrees gives
$d=\sum_j b_jc_j$ and
$\eta_{\Z}(d)=\sum_j b_j\eta_{\Z}(c_j)$.
For each $i$, subadditivity gives
$\eta_{i,\Z}(d)\le\sum_j b_j\eta_{i,\Z}(c_j)$.
The sum of these inequalities is an equality, since
$\eta_{\Z}=\sum_i\eta_{i,\Z}$. Hence each is an equality:
\[
 \sum_j b_j\eta_{i,\Z}(c_j)=\eta_{i,\Z}(d)
 \qquad\text{for every }i.
\]
The same identity for $W_\beta$ shows that a division step using
$\widetilde G_\beta$ decreases the sum of the $\eta_{i,\Z}(c_j)$
over the factors of the current monomial by $n_{i\beta}$,
while multiplying its coefficient by $(U-\lambda_i)^{n_{i\beta}}$.
Thus the remainder of $x^{\boldsymbol a}$ is
\[
 \prod_i(U-\lambda_i)^{
       \sum_j a_j\eta_{i,\Z}(c_j)-\eta_{i,\Z}(d)}W_d,
\]
independently of the choices made during division.
Each $S$-polynomial compares the two first division steps
obtained by cancelling the same least common multiple with
two different equations. The resulting expressions have the same
remainder by the displayed formula, so every $S$-polynomial has
zero remainder.
Buchberger's criterion therefore shows that the
$\widetilde G_\beta$ form a monic Gr\"obner basis over
$\Lambda[U]$, with standard monomials $W_d$.

To check the equations of $B_0$, recall that each remainder
$R_{\alpha\beta}$ used to define $J_{\mathrm{coeff}}$ in
Section~\ref{sec:finite} is obtained from an $S$-polynomial
by subtracting multiples of the $G_\beta$.
It therefore belongs to $(G_1,\ldots,G_r)$ and is a linear
combination of standard monomials.
After replacing the parameters by their prescribed polynomials
in the $\lambda_i$, it still has only standard monomials and
belongs to $(\widetilde G_1,\ldots,\widetilde G_r)$.
The Gr\"obner-basis property forces it to be zero.
Since $U$ remains an indeterminate, every coefficient defining
$J_{\mathrm{coeff}}$ vanishes,
and the assignment factors through $B_0$; it is graded by construction.
The pullback and its quotient by $(U)$ are free by
\eqref{eq:adaptedfree}, proving flatness of the pair.
Applying this division procedure to $W_cW_d$ gives
\eqref{eq:decomposition-family}.
\end{proof}
For the homomorphism $\psi_{\mathcal D}^*$ of
Proposition~\ref{prop:decomposition-family}, set
\begin{equation}\label{eq:pair-component-family}
 B_{0,\mathcal D}
 =\operatorname{im}(\psi_{\mathcal D}^*),
 \qquad
 \mathcal A_{0,\mathcal D}
 =\mathcal A_0\otimes_{B_0}B_{0,\mathcal D}.
\end{equation}
This family is free by base change. Complete $B_{0,\mathcal D}$
at its ideal of positive-degree elements, and $\mathcal A_{0,\mathcal D}$
at the ideal generated by these base parameters.
The resulting formal pair family, with marked divisor defined by $U=0$,
is the pullback of $\widehat\pi_0$ to
$\operatorname{Spf}\widehat B_{0,\mathcal D}$.

For $0\ne f\in\C(U)$, the order
$\operatorname{ord}_a f$ is the unique integer $r$ such that
$f(U)=(U-a)^r h(U)$ with $h$ regular and nonzero at $a$.
The next proposition recovers a decomposition by taking these orders
in the multiplication polynomials.

\begin{proposition}\label{prop:recover}
Assume that $\eta_{\Z,\beta}>0$ for some $\beta$.
Then every closed point of $\Spec B_0$ is obtained from a
lattice-friendly decomposition into nontrivial summands by
\eqref{eq:decomposition-family}.
If the parameter values attached to its summands are pairwise
distinct, their multiplicity functions determine the decomposition
up to permutation and integral translations of the summands whose sum is zero.
\end{proposition}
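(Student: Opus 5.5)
The plan is to read a decomposition directly off the multiplication polynomials \eqref{eq:F} of the fibre, one root of $U$ at a time. Fix a closed point $t$ of $\Spec B_0$ and specialise \eqref{eq:F} to the fibre, which is the $\C[U]$-algebra $\mathcal A_t=\bigoplus_{c\in C}\C[U]e_c$ with $e_ce_d=p_{c,d}(U)e_{c+d}$. Each $p_{c,d}$ is monic of degree $\eta_{\Z}(c,d)$, so it splits over $\C$. Let $a_1,\ldots,a_k$ be the distinct roots of all the $p_{c,d}$, and put
\[
 \varphi_i(c,d)=\operatorname{ord}_{a_i}p_{c,d}\ge0 .
\]
Associativity and commutativity of $\mathcal A_t$, compared in the free basis $e_{c+d+f}$, give $p_{c,d}\,p_{c+d,f}=p_{d,f}\,p_{c,d+f}$ and $p_{c,d}=p_{d,c}$. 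Taking orders, each $\varphi_i$ is a symmetric $\Z$-valued $2$-cocycle on the monoid $C$. Moreover $\sum_i\varphi_i=\eta_{\Z}(c,d)$, by comparing degrees.

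\textbf{Step 1 (integrating the cocycles).} Since $C-C=M$ is free, I would first extend each $\varphi_i$ to $M$. Symmetric $2$-cocycles on a free abelian group are coboundaries (equivalently, $\operatorname{Ext}(M,\Z)=0$). So there is $h_i\colon C\to\Z$ with $\varphi_i(c,d)=h_i(c)+h_i(d)-h_i(c+d)$. Each $h_i$ is unique up to adding an element of $N$. Nonnegativity of $\varphi_i$ makes $h_i$ subadditive. Since $\sum_i h_i-\eta_{\Z}$ is additive, I can shift one $h_i$ by a linear form so that $\sum_ih_i=\eta_{\Z}$ exactly. Next set $\eta_i(c)=\lim_n h_i(nc)/n$. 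This limit exists by subadditivity (Fekete), and it is positively homogeneous and sublinear. It therefore extends to $\kappa^\vee$, and, using that $h_i$ is bounded below by a linear form, it equals $-\min_{Q_i}\langle c,\cdot\rangle$ for a rational polyhedron $Q_i$ with tail cone $\kappa$. Homogenising $\sum_ih_i=\eta_{\Z}$ gives $\sum_i\eta_i=\eta$, hence $P=Q_1+\cdots+Q_k$.

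\textbf{Step 2 (roundings and lattice-friendliness).} Subadditivity gives $h_i(c)\ge\eta_i(c)$, hence $h_i(c)\ge\lceil\eta_i(c)\rceil$ by integrality. Therefore
\[
 \eta_{\Z}(c)=\sum_ih_i(c)\ge\sum_i\lceil\eta_i(c)\rceil\ge\lceil\eta(c)\rceil=\eta_{\Z}(c),
\]
so both inequalities are equalities. This yields $h_i=\eta_{i,\Z}$ and $\sum_i\lceil\eta_i(c)\rceil=\lceil\sum_i\eta_i(c)\rceil$ for all $c\in C$.

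\emph{Main obstacle.} Deducing lattice-friendliness from this rounding identity is the step I expect to be hardest. Equality of ceilings at a single $c$ does not force at most one nonintegral value, as $0.9+0.9$ shows, so I must use all $c$ together. Suppose two vertices $v_i,v_j$ of one induced vertex decomposition lie outside $N$. Then on the open normal cone of that decomposition $c\mapsto\langle c,v_i\rangle$ and $c\mapsto\langle c,v_j\rangle$ are linear. I would choose lattice points $c$ in this cone (adding large multiples of a fixed interior point and using the torsion classes of $v_i,v_j$ modulo $N$) whose two fractional parts are both positive and sum to at most $1$. At such $c$ the ceiling identity fails. This contradiction gives the vertex form of Definition~\ref{def:latticefriendly}. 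Summands $Q_i$ with $\eta_{i,\Z}$ linear are trivial and get absorbed; each $Q_i$ attached to a genuine root is nontrivial because some $\varphi_i\neq0$.

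\textbf{Step 3 (the point is $\psi_{\mathcal D}(a)$, and uniqueness).} Now $p_{c,d}=\prod_i(U-a_i)^{\varphi_i(c,d)}$ coincides with \eqref{eq:decomposition-family} at $\lambda_i=a_i$. The coordinates $t_{\beta,j}$ of the point are the coefficients of the products of generator monomials reducing to $W_\beta$, and these are exactly the $p_{c,d}$ evaluated on Gröbner leading terms. Hence $t=\psi_{\mathcal D}(a_1,\ldots,a_k)$. If $\eta_{\Z,\beta}>0$ for some $\beta$ but all roots coincide, this gives the one-summand decomposition $P$ itself. Conversely, suppose the point comes from some lattice-friendly $\mathcal D$ with pairwise distinct parameter values. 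Then the multiplicity function at each value $\lambda_i$ recovers $\varphi_i=\delta\eta_{i,\Z}$, hence $\eta_{i,\Z}$ up to an element of $N$. Homogenising recovers $Q_i$ up to an integral translation, and the constraint $\sum_iQ_i=P$ forces these translations to sum to zero. This gives the stated uniqueness up to permutation.
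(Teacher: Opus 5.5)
Your outline follows the paper's proof closely. You take the roots of the $p_{c,d}$ and use their orders as multiplicity functions. You integrate these to subadditive $h_i$ normalized by $\sum_ih_i=\eta_{\Z}$, and then homogenize. You prove the rounding identity $h_i=\lceil\eta_i\rceil$, give a torsion-class argument for lattice-friendliness, and read off $q=\psi_{\mathcal D}(a_1,\ldots,a_k)$. Your $\operatorname{Ext}(M,\Z)=0$ integration is a valid substitute for the paper's localization $e_c=f_c(U)x^c$ over $\C(U)$: extend $\varphi_i$ by group-completing the cancellative monoid $\Z\times_{\varphi_i}C$. Your chain of ceiling inequalities is equivalent to the paper's bound $\sum_i(g_i-\eta_i)<1$.

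The real gap is the claim in Step~1 that the Fekete limit $\eta_i$ is the support function of a rational polyhedron. Subadditivity and a linear lower bound only make $\eta_i$ sublinear, i.e.\ the support function of some closed convex set. For example, $c\mapsto\lceil\|c\|\rceil$ on $\Z^2$ is subadditive, integral and nonnegative, and its homogenization is the support function of a disc. Polyhedrality has to come from $\sum_i\eta_i=\eta$, which you derive only afterwards. On a full-dimensional normal cone $\nu$ of $P$ the function $\eta$ is linear. Each $\eta_i(c)+\eta_i(d)-\eta_i(c+d)$ is nonnegative, and their sum vanishes for $c,d\in\nu\cap M$. So every $\eta_i$ is additive on $\nu\cap M$ and extends to a linear form $\ell_{i,\nu}$. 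Convexity then gives $\eta_i=\max_\nu\ell_{i,\nu}$ and $Q_i=\operatorname{conv}\{v_{i,\nu}\}+\kappa$. This is exactly the step the paper carries out with $D^{-1}g_i(D\,\cdot)$, and it is also what justifies extending from $C$ to all of $\kappa^\vee$. Your torsion-class argument presupposes that the $v_{i,\nu}$ are rational, and this needs one more line. With $D$ clearing the denominators of the vertices of $P$, apply your ceiling identity at $c\in DM\cap\nu$: there $\eta(c)\in\Z$ forces every $\eta_i(c)\in\Z$, hence $v_{i,\nu}\in D^{-1}N$.

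There are also two smaller omissions. First, you never show that $\{a_1,\ldots,a_k\}$ is finite, although it is the root set of infinitely many polynomials. You need the direction of \eqref{eq:root-sets} that your Step~3 does not use. Dividing $W_cW_d$ by the $G_\beta$ multiplies by some $p_\beta(U)$ at each step, so every $p_{c,d}$ is a product of the finitely many $p_\beta$.

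Second, in the lattice-friendliness step you should say how to get both fractional parts positive with sum at most $1$. Two proper subgroups cannot cover $M/DM$, so some class makes both pairings nonintegral. If the two fractional parts sum to more than $1$, pass to the negative class (or to $(D-1)c$), which replaces each $f$ by $1-f$. Then shift by $nDw$ into $\operatorname{int}\nu$. With these repairs the proof goes through along the paper's lines.
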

\begin{proof}
Fix a closed point $q\in\Spec B_0$. This amounts to choosing complex
values $t_{\beta,j}(q)$ satisfying the equations defining $B_0$.
Substitute $t_{\beta,j}(q)$ for each parameter $t_{\beta,j}$ in the
family equations. In particular,
\[
 p_\beta(U)=U^{\eta_{\Z,\beta}}
 +\sum_{j=1}^{\eta_{\Z,\beta}}
 t_{\beta,j}(q)U^{\eta_{\Z,\beta}-j}\in\C[U],
\]
and the fibre over $q$ has multiplication
\[
 e_ce_d=p_{c,d}(U)e_{c+d}.
\]
Recall that $\beta$ indexes the defining equations from
Section~\ref{sec:finite}, whereas $c,d\in C$ index basis elements
$e_c,e_d$.
Throughout this proof, $p_\beta$ and $p_{c,d}$ denote the polynomials
after this substitution. Their coefficients are now complex numbers, while $U$ remains
an indeterminate. Their leading coefficient is still $1$, so
their degrees are unchanged.
We will recover one Minkowski summand for each distinct root
of these polynomials and use that root as its parameter value.

First, we prove that
\begin{equation}\label{eq:root-sets}
 \bigcup_{c,d\in C}\{a\in\C:p_{c,d}(a)=0\}
 =\bigcup_{\beta=1}^{r}\{a\in\C:p_\beta(a)=0\}.
\end{equation}
To compute $p_{c,d}$, apply the division procedure of
Section~\ref{sec:finite} to $W_cW_d$, treating $U$ as part of the
coefficient ring $\C[U]$. By \eqref{eq:finiteeta}, each step replaces
a factor $\LM(g_\beta)$ by $p_\beta(U)W_\beta$. The monomial in the
other variables strictly decreases, and the remainder is
$p_{c,d}(U)W_{c+d}$. Since each step contributes a factor $p_\beta(U)$,
$p_{c,d}$ is a product of the $p_\beta$.

Conversely, fix a defining equation $G_\beta=0$. Since
$\LM(g_\beta)=\prod_jx_j^{\alpha_{\beta,j}}$, $x_j=e_{c_j}$ and
$W_\beta=e_{c_\beta}$ in the fibre algebra, this equation reads
\[
 \prod_j e_{c_j}^{\alpha_{\beta,j}}=p_\beta(U)e_{c_\beta}.
\]
Compute the same product by multiplying its factors one at a time
using \eqref{eq:F}. Each multiplication contributes a polynomial
$p_{c,d}(U)$, and the final basis element is $e_{c_\beta}$ because
$c_\beta=\sum_j\alpha_{\beta,j}c_j$. As the $e_c$ form a basis over
$\C[U]$, the coefficient obtained in this way must equal $p_\beta(U)$.
Thus every $p_\beta$ is a product of the $p_{c,d}$, and the
multiplication polynomials recover all original parameters
$t_{\beta,j}(q)$ as the coefficients of the $p_\beta$.
The two product statements prove \eqref{eq:root-sets}.
This common set is finite, since there are finitely many $p_\beta$,
and nonempty because some $p_\beta$ has positive degree.
Denote the distinct roots by $a_1,\ldots,a_k$ and write
\[
 \eta_{i,\Z}(c,d)=\operatorname{ord}_{a_i}p_{c,d},\qquad
 p_{c,d}(U)=\prod_i(U-a_i)^{\eta_{i,\Z}(c,d)}.
\]
These multiplicities are nonnegative integers whose sum is
$\deg p_{c,d}=\eta_{\Z}(c,d)$.

Next, we seek integer-valued functions $g_i:C\to\Z$ such that
\begin{equation}\label{eq:multiplicity-difference}
 \eta_{i,\Z}(c,d)=g_i(c)+g_i(d)-g_i(c+d)
 \qquad(c,d\in C).
\end{equation}
To construct them, put $K=\C(U)$, where every $p_{c,d}$ is invertible.
Multiplication by $e_c$ is injective: it sends each basis element
$e_d$ to a nonzero multiple of $e_{c+d}$. We may therefore invert
all $e_c$. Since $C-C=M$, every degree of the resulting algebra
contains a unit $e_c/e_d$. The multiplication rule shows that
any two such fractions of the same degree are proportional over $K$,
so each homogeneous component is one-dimensional. Choose units
in the degrees of a lattice basis of $M$. Their products and
inverses give Laurent monomials $x^c$ for $c\in M$, satisfying
$x^cx^d=x^{c+d}$. Each $e_c$ is a nonzero scalar
multiple of $x^c$, and comparison with \eqref{eq:F} gives
\[
 e_c=f_c(U)x^c,\qquad
 p_{c,d}(U)=\frac{f_c(U)f_d(U)}{f_{c+d}(U)},
 \qquad f_c(U)\in K^\times.
\]
Set $g_i(c)=\operatorname{ord}_{a_i}f_c$. Taking orders at $a_i$
in the quotient formula proves \eqref{eq:multiplicity-difference},
since $\operatorname{ord}_{a_i}p_{c,d}=\eta_{i,\Z}(c,d)$.
In particular, the $g_i$ are subadditive because these multiplicities
are nonnegative. Put $L(c)=\sum_i g_i(c)-\eta_{\Z}(c)$.
Since $\sum_i\eta_{i,\Z}(c,d)=\eta_{\Z}(c,d)$, summing
\eqref{eq:multiplicity-difference} over $i$ gives
$L(c+d)=L(c)+L(d)$. As $C-C=M$, the rule
$L(c-d)=L(c)-L(d)$ extends $L$ uniquely to an integral linear form
on $M$. Replace $g_1$ by $g_1-L$, leaving the other $g_i$ unchanged
and retaining the same notation. Additivity of $L$ ensures that
\eqref{eq:multiplicity-difference} is unchanged, while now
$\sum_i g_i=\eta_{\Z}$.

We now turn the $g_i$ into support functions of polyhedra.
Choose an integer $D\ge2$ such that $Dv\in N$ for every vertex
$v$ of $P$. This removes rounding on $DC$, since
$\eta_{\Z}(Dc)=\eta(Dc)=D\eta(c)$. Put
\[
 \eta_i(c)=D^{-1}g_i(Dc)\qquad(c\in C),
 \qquad\sum_i\eta_i(c)=\eta(c).
\]
On a full-dimensional normal cone $\nu$ of $P$, the function
$\eta$ is linear. Hence, for $c,d\in\nu\cap M$,
\[
 \sum_i\bigl(g_i(Dc)+g_i(Dd)-g_i(D(c+d))\bigr)=0.
\]
Every summand is nonnegative by subadditivity, so each is zero.
Consequently $g_i(D\,\cdot)$ is additive
on $\nu\cap M$. This monoid generates $M$ as a group:
for $w\in\operatorname{int}(\nu)\cap M$ and $a\in M$,
write $a=(a+nw)-nw$ with both terms in $\nu\cap M$ for $n\gg0$.
Thus $D^{-1}g_i(D\,\cdot)$ extends uniquely to a linear form
$\ell_{i,\nu}$ taking values in $D^{-1}\Z$ on $M$.

On intersections of normal cones, the linear forms agree on
lattice points, since they extend the same function
$D^{-1}g_i(D\,\cdot)$. These lattice points span the intersection,
so the forms agree everywhere on it. They therefore
glue to a continuous piecewise linear function $\eta_i$ on
$\kappa^\vee$. Subadditivity and $\sum_i\eta_i=\eta$ hold
on lattice points; scaling and continuity extend them first
to rational points and then to all of $\kappa^\vee$.

Write $\ell_{i,\nu}(c)=-\langle c,v_{i,\nu}\rangle$ with
$v_{i,\nu}\in D^{-1}N$. Each linear piece is bounded above
by $\eta_i$ everywhere: for $y\in\operatorname{int}(\nu)$,
$c\in\kappa^\vee$ and sufficiently small $\varepsilon>0$,
we have $y-\varepsilon c\in\nu$, and
\[
 \eta_i(y)\le\eta_i(y-\varepsilon c)+\varepsilon\eta_i(c)
 =\eta_i(y)-\varepsilon\ell_{i,\nu}(c)+\varepsilon\eta_i(c).
\]
Hence $\eta_i=\max_\nu\ell_{i,\nu}$. Define
\[
 Q_i=\{v\in N_{\mathbb R}:\langle c,v\rangle\ge-\eta_i(c)
                         \text{ for all }c\in\kappa^\vee\}.
\]
Then
\[
 Q_i=\operatorname{conv}\{v_{i,\nu}:\nu\text{ a full-dimensional
 normal cone of }P\}+\kappa.
\]
Thus $Q_i$ has tail $\kappa$, its vertices lie in $D^{-1}N$, and
$\eta_i(c)=-\min_{v\in Q_i}\langle c,v\rangle$.
Since support functions add under Minkowski sums,
$\sum_i\eta_i=\eta$ gives $P=\sum_iQ_i$.

We first show that
\begin{equation}\label{eq:rounding-recovery}
 \lceil\eta_i(c)\rceil=g_i(c)
 \qquad(c\in C,\ 1\le i\le k).
\end{equation}
Subadditivity gives $g_i(Dc)\le Dg_i(c)$, hence
$g_i(c)\ge D^{-1}g_i(Dc)=\eta_i(c)$. Together with
$\sum_i g_i=\eta_{\Z}$ and $\sum_i\eta_i=\eta$, this yields
\[
 0\le g_i(c)-\eta_i(c),\qquad
 \sum_i\bigl(g_i(c)-\eta_i(c)\bigr)
       =\lceil\eta(c)\rceil-\eta(c)<1.
\]
Since $g_i(c)$ is integral, these inequalities prove
\eqref{eq:rounding-recovery}; in particular, $g_i=\eta_{i,\Z}$.

We now show that the decomposition is lattice-friendly.
Put $\epsilon(c)=\lceil\eta(c)\rceil-\eta(c)\in[0,1)$.
We first prove that
\begin{equation}\label{eq:nonintegral-count}
 \#\{i:\eta_i(c)\notin\Z\}
 =\epsilon(c)+\epsilon((D-1)c)<2
 \qquad(c\in C).
\end{equation}
By \eqref{eq:rounding-recovery} and $\sum_i g_i=\eta_{\Z}$,
$\epsilon(c)$ is the sum of the rounding errors
$\lceil\eta_i(c)\rceil-\eta_i(c)$.
Since $D\eta_i(c)$ is integral and
$\eta_i((D-1)c)=(D-1)\eta_i(c)$, the two errors at $c$ and
$(D-1)c$ sum to $\lceil\eta_i(c)\rceil-\lfloor\eta_i(c)\rfloor$.
This is $0$ when $\eta_i(c)$ is integral and $1$ otherwise.
Summing over $i$ proves the equality in \eqref{eq:nonintegral-count};
the strict inequality follows because both values of $\epsilon$
are less than $1$. Thus at most one $\eta_i(c)$ is nonintegral
for each $c\in C$.

We verify the equivalent vertex condition of
Definition~\ref{def:latticefriendly}. Fix a full-dimensional normal
cone $\nu$ of $P$, and let $v_{i,\nu}$ be the corresponding
vertices of the $Q_i$. Suppose that $v_{i,\nu}$ and $v_{j,\nu}$,
with $i\ne j$, are both nonintegral. We will find a single
$c\in\operatorname{int}(\nu)\cap M$ for which both support values
$\eta_i(c)$ and $\eta_j(c)$ are nonintegral.
Since $Dv_{i,\nu},Dv_{j,\nu}\in N$, the pairings define homomorphisms
\[
 M/DM\longrightarrow\Q/\Z,\qquad
 [c]\longmapsto\langle c,v_{i,\nu}\rangle\bmod\Z
\]
and likewise for $j$. Nonintegrality of the vertices makes both
maps nonzero, so their kernels are proper subgroups.
Two proper subgroups cannot cover a group: if one contains the
other, their union is proper; otherwise, the sum of an element
from each set difference lies outside both.
Choose a residue class outside both kernels and a representative
$c_0\in M$. For $w\in\operatorname{int}(\nu)\cap M$ and $n\gg0$,
$c=c_0+nDw$ lies in $\operatorname{int}(\nu)$ and represents the
same class. Hence both
$\eta_i(c)=-\langle c,v_{i,\nu}\rangle$ and
$\eta_j(c)=-\langle c,v_{j,\nu}\rangle$ are nonintegral,
contradicting \eqref{eq:nonintegral-count}.
Thus every induced vertex decomposition has at most one
nonintegral summand, so the decomposition is lattice-friendly
by Definition~\ref{def:latticefriendly}.
Moreover, \eqref{eq:rounding-recovery} shows that
\eqref{eq:decomposition-family}, with $\lambda_i=a_i$, has exactly
the multiplicities used in the factorization of $p_{c,d}$ above.
It therefore recovers all multiplication polynomials. Hence the
parameters of the family associated with
$\mathcal D:P=Q_1+\cdots+Q_k$, evaluated at $\lambda_i=a_i$,
equal $t_{\beta,j}(q)$. Therefore
$\psi_{\mathcal D}(a_1,\ldots,a_k)=q$, as required.

Finally, we prove uniqueness and exclude trivial summands.
For pairwise distinct parameter values, polynomial factorization
determines the multiplicity functions root by root, up to permutation.
If $g_i$ and $g'_i$ give the same multiplicities, their
difference is additive and extends to an integral linear form
on $M=C-C$, so their summands differ by integral translations.
These translations sum to zero because the support functions
sum to $\eta$. A zero multiplicity function makes $g_i$ integral
linear, hence $\eta_i=g_i$ and $Q_i=n_i+\kappa$ for some $n_i\in N$.
Conversely, such a trivial summand has zero multiplicities.
Each chosen root $a_i$ occurs in some $p_{c,d}$, so its multiplicity
function is nonzero. Thus all recovered summands are nontrivial.
\end{proof}

\begin{proposition}\label{prop:componentimages}
For a lattice-friendly decomposition $\mathcal D$ into $k$ nontrivial
summands, the image $\mathcal C^0_{\mathcal D}$ of $\psi_{\mathcal D}$
is closed and irreducible of dimension $k$.
For any lattice-friendly decomposition
$\mathcal E$, we have $\mathcal C^0_{\mathcal D}\subseteq
\mathcal C^0_{\mathcal E}$ if and only if $\mathcal E$ refines
$\mathcal D$. Consequently, maximal lattice-friendly decompositions
of $P$ correspond bijectively to the irreducible components of
$(\Spec B_0)_{\mathrm{red}}$. Completion at the origin gives the
same correspondence for $(\Spec\widehat B_0)_{\mathrm{red}}$.
\end{proposition}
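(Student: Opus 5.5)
The plan is to use $\psi_{\mathcal D}$ of Proposition~\ref{prop:decomposition-family} together with the recovery statement of Proposition~\ref{prop:recover}.

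\emph{Closedness, irreducibility and dimension.} The image of $\psi_{\mathcal D}$ is irreducible because $\mathbb A^k$ is. For closedness, I will show that $\psi_{\mathcal D}^*$ makes $\Lambda=\C[\lambda_1,\ldots,\lambda_k]$ a finite module over $B_0$. Since $Q_i$ is nontrivial, some multiplicity $\eta_{i,\Z}(c,d)$ is positive. By \eqref{eq:decomposition-family}, $\lambda_i$ is then a root of the monic polynomial $p_{c,d}(U)$, and the coefficients of $p_{c,d}$ lie in the image of $B_0$: they are the $t_{c,d,j}$ of \eqref{eq:F}, which are images of elements of $B_0$. So each $\lambda_i$ is integral over $\psi_{\mathcal D}^*(B_0)$. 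A finite morphism is closed, so the image is closed and has dimension $k$.

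\emph{Inclusions.} Suppose $\mathcal E$ refines $\mathcal D$, so each $Q_i$ is a sum $\sum_{l\in L_i}Q'_l$ of summands of $\mathcal E$. The support functions then add, and by lattice-friendliness of $\mathcal E$ so do the roundings, giving $\eta_{i,\Z}=\sum_{l\in L_i}\eta'_{l,\Z}$. Specialize $\lambda'_l=\lambda_i$ for $l\in L_i$. Formula \eqref{eq:decomposition-family} for $\mathcal E$ then becomes that for $\mathcal D$. Since the multiplication polynomials determine the $t_{\beta,j}$ (proof of Proposition~\ref{prop:recover}), we get $\psi_{\mathcal D}=\psi_{\mathcal E}\circ\Delta$ for the diagonal map $\Delta$, hence $\mathcal C^0_{\mathcal D}\subseteq\mathcal C^0_{\mathcal E}$.

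For the converse, take a general point $a$ of $\mathbb A^k$ with pairwise distinct coordinates. Its image $q$ lies in $\mathcal C^0_{\mathcal E}$, so $q=\psi_{\mathcal E}(b)$. Group the coordinates of $b$ by value, $b_l=a'_s$. The coarsened decomposition $Q''_s=\sum_{b_l=a'_s}Q'_l$ is lattice-friendly, and it has distinct parameter values. The uniqueness part of Proposition~\ref{prop:recover} then identifies $\{Q''_s\}$ with $\{Q_i\}$ up to integral translations summing to zero, after discarding trivial summands. In particular $\mathcal E$ refines $\mathcal D$. The \textbf{main obstacle} here is this coarsening step: I must check that merging summands with equal roots keeps lattice-friendliness and reproduces the same multiplicity functions. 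The identity $\eta_{\Z}=\sum\eta_{\Z,l}$ should give both.

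\emph{Components.} By Proposition~\ref{prop:recover}, $(\Spec B_0)_{\mathrm{red}}$ is the union of the $\mathcal C^0_{\mathcal D}$. Every lattice-friendly decomposition has at most $\dim P$-bounded complexity, so only finitely many maximal ones occur; I will argue this via the finitely many roots bounded by $\sum\eta_{\Z,\beta}$. By the inclusion criterion, the maximal elements are the $\mathcal C^0_{\mathcal D}$ with $\mathcal D$ maximal, and distinct maximal decompositions give incomparable closed sets. In the degenerate case where all $\eta_{\Z,\beta}=0$, we have $B_0=\C$ and the empty decomposition, which is handled separately.

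\emph{Completion.} The $\mathcal C^0_{\mathcal D}$ are cones, since they are stable under the $\C^*$ scaling of degrees, and each contains the origin. Therefore the irreducible components of the completion at the origin correspond to them. Each cone image is also analytically irreducible at the vertex, because its normalization is graded with a unique point over $0$.
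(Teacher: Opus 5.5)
Most of your proposal follows the paper closely and is fine. That covers closedness via integrality of each $\lambda_i$ over $B_{0,\mathcal D}$ (you use $p_{c,d}$ where the paper uses $p_\beta$, which is immaterial), both directions of the containment criterion (blockwise specialization $\lambda'_l=\lambda_i$, then coarsening by root values plus the uniqueness part of Proposition~\ref{prop:recover}), and the degenerate case.

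\textbf{The gap is the finiteness step.} Bounding the number of distinct roots by $\sum_\beta\eta_{\Z,\beta}$ only bounds the number $k$ of nontrivial summands. You do need that bound, so that refinement chains terminate. But it says nothing about how many decompositions exist for a given $k$, and the phrase about ``$\dim P$-bounded complexity'' does not supply it. Without finiteness you only know that $(\Spec B_0)_{\mathrm{red}}$ is a possibly infinite union of pairwise incomparable closed irreducible sets $\mathcal C^0_{\mathcal D}$, $\mathcal D$ maximal. From that you cannot conclude that a given irreducible component equals one of them.

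The missing idea, used in the paper, is that a nontrivial summand $Q_i$ is determined up to integral translation by its vector $(n_{i\beta})_\beta$:
\begin{itemize}
\item The fixed division procedure writes every $p_{c,d}$ as a product of the $p_\beta$. The exponents do not depend on the parameters, since the reduction steps are dictated by leading monomials alone.
\item Hence $\eta_{i,\Z}(c,d)$ is a fixed linear combination of the $n_{i\beta}$.
\item Proposition~\ref{prop:recover} recovers $Q_i$, up to integral translation, from this multiplicity function.
\end{itemize}
Since $0\le n_{i\beta}$ and $\sum_i n_{i\beta}=\eta_{\Z,\beta}$, only finitely many decompositions can occur. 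Alternatively, you could note that rational decompositions form a countable set and that an irreducible complex variety is not a countable union of proper closed subsets.

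\textbf{Completion.} Your route is valid but differs from the paper's. You obtain analytic irreducibility of each cone from its normalization being graded with a single point over the vertex. That relies on the theorem, for excellent local domains, that analytic branches correspond to maximal ideals of the normalization. The paper argues more elementarily:
\begin{itemize}
\item $\widehat B_{0,\mathcal D}$ consists of formal sums of homogeneous elements, so it is a domain by a lowest-degree-term argument.
\item Exactness of completion then shows that the extended ideals $I_{\mathcal D}\widehat B_0$ are prime.
\item They contract to $I_{\mathcal D}$, so they are pairwise incomparable.
\item Their intersection is nilpotent, so they are exactly the minimal primes of $\widehat B_0$.
\end{itemize}
Whichever route you take, state this last step explicitly rather than leaving it inside ``therefore''.
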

\begin{proof}
Assume first that $B_0\ne\C$. We first prove that each image
$\mathcal C^0_{\mathcal D}$ is closed and irreducible of the stated
dimension. Fix $\mathcal D:P=Q_1+\cdots+Q_k$ and recall that
$B_{0,\mathcal D}=\operatorname{im}\psi_{\mathcal D}^*$ is a
subring of $\Lambda=\C[\lambda_1,\ldots,\lambda_k]$.
For each nontrivial $Q_i$, some $n_{i\beta}$ is positive, by the
last argument of Proposition~\ref{prop:recover}. The polynomial
\[
 p_\beta(U)=\prod_{j=1}^k(U-\lambda_j)^{n_{j\beta}}
 \in B_{0,\mathcal D}[U]
\]
is monic and vanishes at $U=\lambda_i$. Thus every $\lambda_i$
is integral over $B_{0,\mathcal D}$, so $\Lambda$ is finite over
this ring. Consequently $\psi_{\mathcal D}$ is finite and
surjective onto $\Spec B_{0,\mathcal D}$, a closed subscheme of
$\Spec B_0$. Since $B_{0,\mathcal D}$ is a domain and its integral
extension $\Lambda$ has dimension $k$, this proves the assertion
about $\mathcal C^0_{\mathcal D}$.

Next, we show that these images form a finite cover of the reduced
base. The fixed division procedure expresses each $p_{c,d}$ as a
product of the $p_\beta$. Taking multiplicities therefore shows
that the vector $(n_{i\beta})_\beta$ determines the entire
multiplicity function of $Q_i$, hence $Q_i$ up to integral
translation by Proposition~\ref{prop:recover}. These vectors are
nonzero, have nonnegative integer entries, and satisfy
\[
 \sum_i n_{i\beta}=\eta_{\Z,\beta},\qquad
 k\le\sum_\beta\eta_{\Z,\beta}.
\]
Only finitely many collections of such vectors are possible.
Their decompositions are determined up to the equivalence of
Definition~\ref{def:latticefriendly}, since the translations sum
to zero when both decompositions sum to $P$.
Proposition~\ref{prop:recover} places every closed point of
$\Spec B_0$ in one of these images. Their union is closed, so
these finitely many closed irreducible images cover the reduced base.

We now prove the containment criterion
\begin{equation}\label{eq:component-refinement}
 \mathcal C^0_{\mathcal D}\subseteq\mathcal C^0_{\mathcal E}
 \quad\Longleftrightarrow\quad
 \mathcal E\text{ refines }\mathcal D.
\end{equation}
Write $\mathcal E:P=R_1+\cdots+R_s$, with parameters
$\mu_1,\ldots,\mu_s$. If $\mathcal E$ refines $\mathcal D$,
then, after the allowed integral translations, there is a
partition into nonempty blocks $J_1,\ldots,J_k$ such that
\[
 Q_i=\sum_{j\in J_i}R_j,\qquad
 n^{\mathcal D}_{i\beta}
 =\sum_{j\in J_i}n^{\mathcal E}_{j\beta}.
\]
The second equality follows because lattice-friendliness makes
the rounded support functions additive within each block.
Setting $\mu_j=\lambda_i$ for $j\in J_i$ in the family for
$\mathcal E$ therefore gives exactly the polynomials $p_\beta$
for $\mathcal D$, proving the required containment.

Conversely, assume the containment in
\eqref{eq:component-refinement}. Choose pairwise distinct
$\lambda_1,\ldots,\lambda_k$ and write
\[
 q=\psi_{\mathcal D}(\lambda_1,\ldots,\lambda_k)
  =\psi_{\mathcal E}(\mu_1,\ldots,\mu_s).
\]
Every parameter value on either side occurs as a root of some
$p_\beta$, since every summand is nontrivial. Equality of these
polynomials therefore shows that their distinct root values
coincide. Thus $J_i=\{j:\mu_j=\lambda_i\}$ form a partition into
nonempty blocks. Comparing multiplicities at $\lambda_i$ gives
$n^{\mathcal D}_{i\beta}=\sum_{j\in J_i}n^{\mathcal E}_{j\beta}$.
Grouping the $R_j$ by these blocks preserves lattice-friendliness,
and Proposition~\ref{prop:recover} identifies each grouped sum with
$Q_i$ up to integral translation. The translations sum to zero
because both decompositions sum to $P$. Hence $\mathcal E$ refines
$\mathcal D$, proving \eqref{eq:component-refinement}.
A proper refinement increases the number of nontrivial summands
and therefore the dimension of the image. Together with the
finite cover above, \eqref{eq:component-refinement} shows that
maximal decompositions correspond bijectively to the irreducible
components of the reduced base.

We next show that completion preserves these components.
For each maximal decomposition $\mathcal D$, put
$I_{\mathcal D}=\ker\psi_{\mathcal D}^*$, so
$B_0/I_{\mathcal D}=B_{0,\mathcal D}$.
Exactness of completion \cite[Tag 00MB]{Stacks} gives
\begin{equation}\label{eq:completed-component}
 \widehat B_0/I_{\mathcal D}\widehat B_0
 \simeq\widehat B_{0,\mathcal D}.
\end{equation}
The degree and maximal-ideal filtrations of the finitely generated
positively graded domain $B_{0,\mathcal D}$ are cofinal.
Its completion consists of formal sums of homogeneous elements,
and the product of two such nonzero sums has a nonzero lowest-degree
term. Hence $\widehat B_{0,\mathcal D}$ is a domain, and
$B_{0,\mathcal D}\to\widehat B_{0,\mathcal D}$ is injective.
By \eqref{eq:completed-component}, the extended ideals
$I_{\mathcal D}\widehat B_0$ are therefore prime and contract to
$I_{\mathcal D}$; in particular, they remain distinct and none
contains another. Their intersection is nilpotent: the intersection
of the $I_{\mathcal D}$ is the nilradical of the Noetherian ring
$B_0$, and exactness of completion preserves this finite intersection.
Thus the extended ideals are exactly the minimal primes of
$\widehat B_0$, proving the component correspondence after completion.
Their dimensions are unchanged, and completing
\eqref{eq:pair-component-family} gives their families.

Finally, suppose $B_0=\C$. All positive-degree parameters then
vanish. If some $\eta_{\Z,\beta}>0$, the one-summand family of
Proposition~\ref{prop:decomposition-family} would send
$t_{\beta,1}$ to the nonzero polynomial
$-\eta_{\Z,\beta}\lambda$, a contradiction.
Thus all $\eta_{\Z,\beta}=0$, and division gives $p_{c,d}=1$
for every $c,d$. In any lattice-friendly decomposition, the
nonnegative multiplicities of the summands therefore sum to zero,
so each is zero. By the last argument of
Proposition~\ref{prop:recover}, every summand is trivial.
Only the trivial decomposition remains, and both bases are single points.
\end{proof}

\begin{proof}[Proof of Theorem~\ref{thm:prorepresentability}]
Section~\ref{sec:finite} constructs the finitely generated parameter
algebra $B_0$ and the explicit family, which is free over $B_0[U]$
by Proposition~\ref{prop:free}. Proposition~\ref{prop:marked-pair}
proves formal universality. Proposition~\ref{prop:componentimages}
identifies the reduced formal components with maximal lattice-friendly
decompositions and gives dimension $k$ for a decomposition into $k$
nontrivial summands. Their families are the completions of the
explicit free families \eqref{eq:pair-component-family}.
\end{proof}

\begin{corollary}\label{cor:coefficient-components}
The irreducible components of $(\Spec\widehat B)_{\mathrm{red}}$
correspond bijectively to maximal lattice-friendly decompositions
of $P_m$. For a maximal decomposition $\mathcal D$ into $k>0$
nontrivial summands, the corresponding component has dimension
$k-1$ and is defined by $\widehat B_{\mathcal D}$, where
\[
 B_{\mathcal D}=B_{0,\mathcal D}/(t_0).
\]
Here $t_0$ is the degree-$m$ parameter in the product
$B_0\simeq B[t_0]$ of \eqref{eq:principalproduct}, recording
the choice of divisor.
The restriction of $\widehat\pi$ to this component is obtained by completing
$\mathcal A_{\mathcal D}
=\mathcal A_{0,\mathcal D}\otimes_{B_{0,\mathcal D}}B_{\mathcal D}$.
\end{corollary}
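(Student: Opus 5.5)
The plan is to transport the component correspondence of Proposition~\ref{prop:componentimages} along the product decomposition $B_0\simeq B[t_0]$ of \eqref{eq:principalproduct}, and its completed form $\widehat B_0\simeq\widehat B[[t_0]]$. First, each minimal prime $I_{\mathcal D}$ of $B_0$ should be stable under the translation $U\mapsto U+\lambda$. On the image of $\psi_{\mathcal D}$ this translation simply shifts all $\lambda_i$ by $\lambda$, since $\prod_i(U-\lambda_i)^{n_{i\beta}}$ becomes $\prod_i(U-\lambda-\lambda_i)^{n_{i\beta}}$. Hence $\mathcal C^0_{\mathcal D}$ is invariant under the $\mathbb G_a$-action defined by the substitution in \eqref{eq:principalproduct}. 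Under the isomorphism $\Spec B_0\simeq\Spec B\times\mathbb A^1$, given by $q\mapsto(\text{normalized tuple},t_0)$, it therefore takes the form $\mathcal C_{\mathcal D}\times\mathbb A^1$, where $\mathcal C_{\mathcal D}=\mathcal C^0_{\mathcal D}\cap V(t_0)$ corresponds to $B_{0,\mathcal D}/(t_0)=B_{\mathcal D}$. Concretely, $t_0=t_{\beta_0,1}/\eta_{\Z,\beta_0}$ maps under $\psi_{\mathcal D}^*$ to $-\frac{1}{\eta_{\Z,\beta_0}}\sum_i n_{i\beta_0}\lambda_i$. So $B_{\mathcal D}$ is the image of $B_0$ in $\Lambda/(\sum_i n_{i\beta_0}\lambda_i)$, up to taking reductions.

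Second, the irreducible components of $(\Spec B)_{\mathrm{red}}$ are exactly the closed sets $\mathcal C_{\mathcal D}$ for maximal $\mathcal D$. Products with $\mathbb A^1$ give a bijection between irreducible closed subsets of $\Spec B$ and $\mathbb G_a$-stable irreducible closed subsets of $\Spec B_0$. This bijection preserves inclusions and shifts dimension by one. The components of $\Spec B_0$ are $\mathbb G_a$-stable by the first step, so maximal decompositions correspond to components $\mathcal C_{\mathcal D}$ of dimension $k-1$. I will need $k>0$ here, which holds when $B_0\ne\C$; the case $B_0=\C$ is handled as in Proposition~\ref{prop:componentimages}.

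One point needs checking: $B_{0,\mathcal D}/(t_0)$ might be non-reduced, whereas the component is really $\Spec$ of its reduction. I would show that it is already a domain. The ring $B_{0,\mathcal D}$ is the invariant-compatible image $B_{\mathcal D}\otimes\C[t_0]$, where $B_{\mathcal D}\subset\Lambda$ is the image of the normalized coordinates. These are polynomials in $\lambda_i-\lambda_{i'}$, equivalently in $\lambda$ restricted to the hyperplane $\sum n_{i\beta_0}\lambda_i=0$. Restricting to that hyperplane exhibits $B_{\mathcal D}$ as a subring of a polynomial ring, hence a domain. Alternatively, the statement can be read as asserting the component only up to reduction.

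Third, completion is handled exactly as at the end of Proposition~\ref{prop:componentimages}. We have $\widehat B/I\widehat B\simeq\widehat B_{\mathcal D}$, and this is a domain because $B_{\mathcal D}$ is positively graded. The extended primes are therefore distinct minimal primes whose intersection is nilpotent. Finally, the restriction of $\widehat\pi$ is obtained by base change: $\mathcal A=\mathcal A_0\otimes_{B_0}B$ by \eqref{eq:principalproduct}, so restricting to $B_{\mathcal D}$ gives $\mathcal A_{0,\mathcal D}\otimes_{B_{0,\mathcal D}}B_{\mathcal D}$, and completing finishes the argument. The main obstacle is the first step, namely verifying cleanly that the translation action preserves each $\mathcal C^0_{\mathcal D}$ and that $(t_0)$ cuts $B_{0,\mathcal D}$ in a prime. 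I would handle this through the explicit description of $\psi_{\mathcal D}^*$ as a shift of the $\lambda_i$.
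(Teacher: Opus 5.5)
Your proof is correct, but it takes a more hands-on route than the paper.

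The paper's argument is purely formal. Since $B_0\simeq B[t_0]$ is a polynomial ring over $B$, its minimal primes are exactly the extensions $\mathfrak pB[t_0]$ of the minimal primes $\mathfrak p$ of $B$. Hence $B_{0,\mathcal D}=B_0/I_{\mathcal D}\simeq(B/\mathfrak p)[t_0]$, and $B_{\mathcal D}=B_{0,\mathcal D}/(t_0)\simeq B/\mathfrak p$ is automatically a domain of dimension $k-1$. Completion and base change then finish exactly as you describe. In that approach two of your steps are unnecessary: the $\mathbb G_a$-stability of each $\mathcal C^0_{\mathcal D}$, and the separate check that $B_{0,\mathcal D}/(t_0)$ is reduced. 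Both follow because every irreducible component of $\Spec B\times\mathbb A^1$ is already of the form $Z\times\mathbb A^1$.

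What your route buys is an explicit description of the component. You use the equivariance of $\psi_{\mathcal D}$ under $\lambda_i\mapsto\lambda_i+\lambda$ and the formula $\psi_{\mathcal D}^*(t_0)=-\frac{1}{\eta_{\Z,\beta_0}}\sum_i n_{i\beta_0}\lambda_i$. This identifies $B_{\mathcal D}$ with the ring generated by the normalized coefficients inside the translation-invariant polynomials $\C[\lambda_i-\lambda_{i'}]\cong\Lambda/\bigl(\sum_i n_{i\beta_0}\lambda_i\bigr)$. Equivalently, the component of $\Spec B$ is the image under $\psi_{\mathcal D}$ of the hyperplane $\sum_i n_{i\beta_0}\lambda_i=0$ in $\mathbb A^k$. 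So the normalized family on the component is visibly governed by the $k-1$ differences of the $\lambda_i$. This is more concrete than the paper's statement, at the cost of a somewhat longer verification.
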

\begin{proof}
For $B_0\ne\C$, \eqref{eq:principalproduct} gives $B_0\simeq B[t_0]$.
Its minimal primes are extensions of those of $B$, giving
$B_{0,\mathcal D}\simeq B_{\mathcal D}[t_0]$.
Completion gives the component correspondence and the dimension
difference, while base change gives the stated family.
If $B_0=\C$, also $B=\C$: both bases are points, and the
only maximal decomposition is trivial, as explained in
Proposition~\ref{prop:componentimages}.
\end{proof}

\begin{remark}\label{rem:gorenstein-outlook}
If $X$ is Gorenstein and $m=R^*$, then
$V(\chi^m)=\partial X$, so the construction gives a deformation
of the pair $(X,\partial X)$ studied in \cite{FilipPair}.
In \cite{FilipLaurent}, we constructed deformation families of
three-dimensional Gorenstein toric pairs from Laurent polynomials
and their mutations. The families associated with $0$-mutable
polynomials are conjecturally families over the smoothing
components predicted by Corti--Filip--Petracci \cite{CFP}.
We hope that the results of the present paper will help extend
these constructions and their component interpretation to
non-Gorenstein toric varieties.
\end{remark}

\begin{example}\label{ex:eighth-equations}\label{ex:eighth-components}
Return to Example~\ref{ex:eighth}, retaining its parameters
$t_{\alpha,j}$. Eliminating the coefficients of $p_4,p_5,p_6$
from \eqref{eq:eighth-overlaps} gives
\[
 B_0\simeq
 \frac{\C[t_{1,1},t_{2,1},t_{3,1},t_{3,2}]}
 {\bigl((t_{1,1}-t_{2,1})(t_{2,1}^2-t_{3,1}t_{2,1}+t_{3,2})\bigr)}.
\]
Indeed, $p_5$ and $p_6$ are determined by $p_1,p_3,p_4$, and
$p_4$ is the monic quotient of $p_1p_3$ by $p_2=U+t_{2,1}$.
The remainder is $(t_{1,1}-t_{2,1})p_3(-t_{2,1})$.
Thus $\Spec B_0$ is reduced and has exactly two irreducible components,
\[
 \mathcal C_1=V(t_{1,1}-t_{2,1}),\qquad
 \mathcal C_2=V(t_{2,1}^2-t_{3,1}t_{2,1}+t_{3,2}).
\]
Both components are three-dimensional.

Recall that the slice is $P=[-\tfrac12,\tfrac32]$. Put $E=[0,1]$,
$L=[-\tfrac12,0]$, and $R=[0,\tfrac12]$. Its two maximal
lattice-friendly decompositions, up to equivalence, are
\begin{equation}\label{eq:eighth-decompositions}
 \mathcal D_1:\quad P=\{-\tfrac12\}+E+E,\qquad
 \mathcal D_2:\quad P=L+R+E.
\end{equation}
Proposition~\ref{prop:decomposition-family} associates to
$\mathcal D_1$ the polynomials
\[
 p_1=p_2=U-\lambda_0,\qquad
 p_3=p_4=(U-\lambda_1)(U-\lambda_2),
\]
where $\lambda_0$ belongs to the singleton and $\lambda_1,\lambda_2$
to the two unit intervals. Comparing coefficients with
$p_1=U+t_{1,1}$, $p_2=U+t_{2,1}$ and
$p_3=U^2+t_{3,1}U+t_{3,2}$ defines
$\psi_{\mathcal D_1}:\mathbb A^3\to\Spec B_0$ by
\[
 t_{1,1}=t_{2,1}=-\lambda_0,\qquad
 t_{3,1}=-(\lambda_1+\lambda_2),\qquad
 t_{3,2}=\lambda_1\lambda_2.
\]
Its image is exactly $\mathcal C_1$.

For $\mathcal D_2$, assign $\lambda_L,\lambda_R,\lambda_E$
to $L,R,E$. The proposition gives
\[
\begin{aligned}
 p_1&=U-\lambda_L,&p_2&=U-\lambda_R,\\
 p_3&=(U-\lambda_R)(U-\lambda_E),&
 p_4&=(U-\lambda_L)(U-\lambda_E).
\end{aligned}
\]
Comparing coefficients defines
$\psi_{\mathcal D_2}:\mathbb A^3\to\Spec B_0$ by
\[
 t_{1,1}=-\lambda_L,\quad t_{2,1}=-\lambda_R,\quad
 t_{3,1}=-(\lambda_R+\lambda_E),\quad
 t_{3,2}=\lambda_R\lambda_E.
\]
These formulas satisfy the defining equation of $\mathcal C_2$.
Conversely, every point of $\mathcal C_2$ is obtained by setting
\[
 \lambda_L=-t_{1,1},\qquad
 \lambda_R=-t_{2,1},\qquad
 \lambda_E=t_{2,1}-t_{3,1},
\]
since $\lambda_R\lambda_E=t_{2,1}t_{3,1}-t_{2,1}^2=t_{3,2}$
on $\mathcal C_2$. Thus $\psi_{\mathcal D_2}$ is an isomorphism
onto $\mathcal C_2$.

Setting $t_{2,1}=0$, as in Example~\ref{ex:eighth-generator-change},
gives the universal equivariant base for $X$ along the chosen
degree ray $\Z_{>0}m$,
\[
 B\simeq\C[t_{1,1},t_{3,1},t_{3,2}]/(t_{1,1}t_{3,2}),
\]
whose two components are affine planes, as in
Corollary~\ref{cor:coefficient-components}. The parameters have
degrees $m,m,2m$, and the relation has no linear term. Hence
\[
 \dim T_X^1(-m)=2,\qquad \dim T_X^1(-2m)=1,
 \qquad T_X^1(-jm)=0\quad(j\ge3).
\]

This non-Gorenstein example lies outside the hypotheses of
\cite[Theorem 6.2]{ACF}: those hypotheses imply
$T_X^1(-jm)=0$ for $j\ge2$, by \cite[Remark 6.4]{ACF}.
\end{example}

\section{Arbitrary primitive degrees}\label{sec:arbitrary-degrees}\label{subsec:truncation}

In this section, we extend equivariant universality and the component
correspondence to arbitrary primitive nonzero $m\in\LL$ by
applying the pair construction to a truncation of the cone.

For $m\notin -S$, set
\[
 \tau=\sigma\cap[m\ge0],\qquad
 S_\gamma=\gamma^\vee\cap\LL,\qquad
 A_\gamma=\C[S_\gamma],\qquad X_\gamma=\Spec A_\gamma
 \quad(\gamma=\sigma,\tau).
\]
Both cones are full-dimensional and strongly convex, and $m\in S_\tau$.
Their slices at height one coincide. The semigroup inclusion
$S_\sigma\subseteq S_\tau$ induces a toric birational morphism
$X_\tau\to X_\sigma$. Altmann transfers the deformation families
associated with Minkowski decompositions of $P_m$ from $X_\tau$
to $X_\sigma$ over the same base, compatibly with this morphism
\cite[Theorem 3.4]{AltmannFamilies}.
In this section, we show that restriction identifies the equivariant
deformation functors.

The semigroups satisfy
\begin{equation}\label{eq:truncationdownward}
 s\in S_\sigma,\quad s-jm\in S_\tau,\quad j\ge0
 \quad\Longrightarrow\quad s-jm\in S_\sigma.
\end{equation}
Indeed, the inequalities on rays where $m\ge0$ hold in $S_\tau$;
on the remaining rays, subtracting $jm$ increases the pairing.

For a graded flat deformation $A_{\tau,R}$, choose homogeneous
monomial lifts $e_s$, which form an $R$-basis by
Lemma~\ref{lem:basis}. By \eqref{eq:truncationdownward},
\begin{equation}\label{eq:truncationsubalgebra}
 \mathcal R(A_{\tau,R})
 :=\sum_{s\in S_\sigma}R(A_{\tau,R})_s
 =\bigoplus_{s\in S_\sigma}Re_s\subset A_{\tau,R}.
\end{equation}
This subalgebra is free over $R$, with special fibre $A_\sigma$.
It is finitely generated over $R$: lifts of finitely many
semigroup generators generate modulo $\mathfrak m_R$, and then
generate the algebra by nilpotence of $\mathfrak m_R$.
Its intrinsic definition and basis description show compatibility
with isomorphisms and base change, giving
$\mathcal R:\Def_m^{X_\tau}\longrightarrow\Def_m^{X_\sigma}$.

Altmann \cite[Theorem~2.5 and Section~3.5]{AltmannFamilies}
showed that truncation preserves the homogeneous tangent space
in the chosen degree. Since $\sigma\cap[jm\ge0]=\tau$ for every
$j\ge1$, his result gives
\[
 T_{X_\tau}^1(-jm)\simeq T_{X_\sigma}^1(-jm),\qquad j\ge1.
\]
For background on the Harrison complex and its relation to
Andr\'e--Quillen and Hochschild cohomology in the toric setting,
see \cite{FilipHochschild}.
The next lemma establishes the corresponding isomorphisms
$T_{X_\tau}^2(-jm)\simeq T_{X_\sigma}^2(-jm)$ and proves
compatibility with obstruction classes.

\begin{lemma}\label{lem:truncationcohomology}
For every $j\ge1$, restriction of homogeneous Harrison cochains induces
isomorphisms
\begin{equation}\label{eq:truncationcohomology}
 T_{X_\tau}^i(-jm)\xrightarrow{\ \sim\ }T_{X_\sigma}^i(-jm),
 \qquad i=1,2.
\end{equation}
These are the tangent and obstruction maps associated to $\mathcal R$.
\end{lemma}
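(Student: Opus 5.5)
We will compute $T^i$ via the Harrison complex of the semigroup algebra, graded by $\LL$. For $A_\gamma=\C[S_\gamma]$, the homogeneous Harrison cochains of degree $-jm$ are functions on tuples $(s_1,\dots,s_p)\in S_\gamma^p$ with values in $A_{\gamma}$ in degree $\sum s_k-jm$. Equivalently, they are functions $\varphi(s_1,\dots,s_p)\in\C$ supported on tuples with $\sum s_k-jm\in S_\gamma$, modulo shuffle conditions; $T^i$ is $\mathrm{Harr}^{i+1}$ for $i=1,2$.

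Restriction sends a $\tau$-cochain to the cochain on tuples in $S_\sigma$. By \eqref{eq:truncationdownward}, if all $s_k\in S_\sigma$ and $\sum s_k-jm\in S_\tau$, then $\sum s_k-jm\in S_\sigma$, so restriction is well defined and commutes with the differential. Because of this implication, the restriction in fact only records the $\sigma$-part of the support.

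To prove bijectivity in degrees $i=1,2$, we filter the $\tau$-complex by the tuples that are not entirely in $S_\sigma$ and show that the kernel complex is acyclic in Harrison degrees $\le3$. Following the strategy of Altmann \cite{AltmannFamilies} for $T^1$, we decompose by the region of $S_\tau\setminus S_\sigma$: elements $s$ with $\langle a,s\rangle<0$ for some ray $a$ of $\sigma$ with $m(a)<0$. For fixed $j$, the relevant cochains live in a finite poset of "violating" data. The plan is to identify the kernel complex with a relative complex computed by a contractible subset. Concretely, for tuples involving an element outside $S_\sigma$, the shift $s\mapsto s+m$ moves it toward $S_\sigma$, and we try to build a contracting homotopy $h\varphi(s_1,\dots)=\varphi(m,s_1,\dots)$-type, using that $m$ acts as a "unit" on the violating region. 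Since $\sigma\cap[jm\ge0]=\tau$, the obstruction to extension is concentrated where it is trivial.

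Alternatively, and perhaps more cleanly, we use Altmann's description of $T^1$ and $T^2$ of toric varieties in terms of the sets $K^R_a=\{r\in S:\langle a,r\rangle<\langle a,R\rangle\}$. These sets depend only on rays $a$ with $\langle a,R\rangle>0$ for $R=-jm$. Those are exactly rays with $m(a)<0$, and they are identical for $\sigma$ and $\tau$, because the new rays of $\tau$ lie on $[m=0]$ and give empty contributions. We would verify that the Čech-type complexes $\bigoplus_a \C^{K_a}$ and their intersections agree after restricting $S_\tau\to S_\sigma$, using \eqref{eq:truncationdownward} again. This gives the isomorphism for $i=1$ (Altmann) and for $i=2$.

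Compatibility with $\mathcal R$ is checked directly. An $R$-deformation $A_{\tau,R}$ with basis $e_s$ is encoded by a multiplication cocycle $e_se_t=\sum \varphi_j(s,t)e_{s+t-jm}$. Its subalgebra $\mathcal R(A_{\tau,R})$ has the restricted cocycle, by \eqref{eq:truncationsubalgebra}. Thus the first-order class and the obstruction class (the Harrison $3$-cochain of associativity defects over a small extension) restrict as well.

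The main obstacle is the $T^2$ step, namely surjectivity of restriction on Harrison $3$-cocycles up to coboundaries. We will attempt it first via Altmann's explicit $T^2$ description, where the ray-set argument seems most tractable.
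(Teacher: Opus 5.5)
There is a genuine gap. Your second route is the right framework, and it is the one the paper uses: $T^i_{X_\gamma}(-jm)$ is computed by the Altmann--Sletsj\o e complex $C^p_\gamma(jm)=\bigoplus_{\dim F=p}\operatorname{Hom}_{\mathrm{ql}}(K^{jm}_{\gamma,F},\C)$, where $K^{jm}_{\gamma,F}=\{a\in S_\gamma\setminus\{0\}:\langle a,v_\rho\rangle<j\langle m,v_\rho\rangle\text{ for all rays }\rho\preceq F\}$. You make two errors within it.

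First, your sign is wrong. A nonzero face contributes only if $m$ is \emph{positive} on all its rays. The rays with $m<0$ that you single out are exactly the rays of $\sigma$ removed in $\tau$. So your claim that they are ``identical for $\sigma$ and $\tau$'' fails as written. With the correct sign, the contributing faces are indeed common to both cones.

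Second, and this is the real content of the lemma, having common faces does not make the complexes agree. For each contributing $F$, including $F=0$, one has $K^{jm}_{\sigma,F}=K^{jm}_{\tau,F}\cap S_\sigma$, which is in general a proper subset. You must show that restriction $\operatorname{Hom}_{\mathrm{ql}}(K^{jm}_{\tau,F},\C)\to\operatorname{Hom}_{\mathrm{ql}}(K^{jm}_{\sigma,F},\C)$ is bijective. Implication \eqref{eq:truncationdownward} only makes the Harrison restriction well defined; it says nothing about this bijectivity. For plain function spaces $\C^{K}$, restriction is not even injective.

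The missing idea is a shift vector adapted to $F$. Choose $w\in\LL\cap\operatorname{relint}(\sigma^\vee\cap F^\perp)$; it vanishes on $F$ and is positive on every other ray of $\sigma$, in particular on all rays with $m<0$. Then $a+nw\in K^{jm}_{\sigma,F}$ for $a\in K^{jm}_{\tau,F}$ and $n\gg0$. Hence $\widetilde f(a)=f(a+nw)-nf(w)$ is well defined, quasilinear, and the only possible extension. Shifting by $m$, as in your first route, cannot do this job. If $\langle s,v_\rho\rangle<0$ for a ray $\rho$ of $\sigma$ with $\langle m,v_\rho\rangle<0$, then $\langle s+m,v_\rho\rangle<\langle s,v_\rho\rangle$, so the shift moves away from $S_\sigma$.

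Two further points. The lemma asserts that the isomorphism on $T^i$ is induced by restricting Harrison cochains, so an abstract isomorphism of the quasilinear complexes is not enough. You must also check that the Altmann--Sletsj\o e identification is natural for restriction. The paper does this with the double complex of scalar Harrison cochains on the sets $K^{jm}_{\gamma,F}$:
its $q=1$ row of vertical cohomology is $C^\bullet_\gamma(jm)$;
the vertical cohomology in columns $p=0,1$ vanishes for $q\ge2$;
and restriction commutes with the coefficient inclusions of the Harrison complexes.

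In your filtration approach there is an off-by-one problem. Restriction is surjective on cochains (extend by zero), so acyclicity of the kernel complex in degrees $\le3$ gives the isomorphism on $T^1=\mathrm{Harr}^2$ but only injectivity on $T^2=\mathrm{Harr}^3$. Surjectivity also requires $H^4$ of the kernel to inject into $\mathrm{Harr}^4$ of $A_\tau$.

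Your final paragraph on compatibility with $\mathcal R$, restricting a multiplication lift and its associator, is fine and matches the paper.
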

\begin{proof}
The case $\tau=\sigma$ is immediate. Assume $\tau\ne\sigma$
and fix $j\ge1$. We first compare the quasilinear complexes
for $\sigma$ and $\tau$. We then identify this
comparison with the restriction of Harrison cochains and check
its compatibility with lifting deformations.

For a ray $\rho$ of either cone, let $v_\rho\in\LL^*$ denote
its primitive lattice generator. For $\gamma=\sigma,\tau$ and
$F\preceq\gamma$, put
\[
 K_{\gamma,F}^{jm}
 =\{a\in S_\gamma\setminus\{0\}:
   \langle a,v_\rho\rangle<j\langle m,v_\rho\rangle
   \text{ for every ray }\rho\preceq F\}.
\]
For $F=0$, this is $S_\gamma\setminus\{0\}$.
Let $\operatorname{Hom}_{\mathrm{ql}}(K,\C)$ denote the functions
$f:K\to\C$ satisfying $f(a+b)=f(a)+f(b)$ whenever
$a,b,a+b\in K$. By \cite[Theorem~5.3]{AltmannSletsjoe},
which applies to arbitrary rational polyhedral cones,
$T_{X_\gamma}^i(-jm)$ is the $i$th cohomology, for $i=1,2$, of
\[
 C_\gamma^p(jm)=
 \bigoplus_{\substack{F\preceq\gamma\\\dim F=p}}
 \operatorname{Hom}_{\mathrm{ql}}(K_{\gamma,F}^{jm},\C),
\]
with the signed face-restriction differentials.

We will show that restriction induces an isomorphism
\begin{equation}\label{eq:quasilinear-restriction}
 C_\tau^\bullet(jm)\xrightarrow{\ \sim\ }C_\sigma^\bullet(jm).
\end{equation}
A nonzero face has $K_{\gamma,F}^{jm}=\varnothing$ unless
$m$ is positive on all its
rays. Such faces are common to $\sigma$ and $\tau$: every new
or truncated nonzero face of $\tau$ contains a ray in $[m=0]$,
whereas a face of $\sigma$ not retained by the cut has a ray
on which $m$ is negative.
For each contributing common face $F$, including $F=0$, choose
a nonzero lattice vector
$w\in\LL\cap\operatorname{relint}(\sigma^\vee\cap F^\perp)$.
The face $F$ is proper in $\sigma$, so this is possible.
Then $w$ vanishes on $F$ and is positive on every other ray
of $\sigma$. Consequently, for $a\in K_{\tau,F}^{jm}$,
we have $a+nw\in K_{\sigma,F}^{jm}$ for $n\gg0$,
and every positive multiple of $w$ lies in $K_{\sigma,F}^{jm}$.
Given a quasilinear function $f$ on $K_{\sigma,F}^{jm}$, define
\[
 \widetilde f(a)=f(a+nw)-nf(w),\qquad n\gg0.
\]
The equality $f(a+(n+1)w)=f(a+nw)+f(w)$ shows that this
does not depend on the sufficiently large choice of $n$.
It agrees with $f$ on $K_{\sigma,F}^{jm}$, and any quasilinear
extension must satisfy this formula. For $a,b,a+b\in K_{\tau,F}^{jm}$,
choose $n$ sufficiently large that all shifted arguments below belong
to $K_{\sigma,F}^{jm}$. Using the same $w$, quasilinearity of $f$ gives
\[
\begin{aligned}
 \widetilde f(a)+\widetilde f(b)
 &=f(a+nw)+f(b+nw)-2nf(w)\\
 &=f(a+b+2nw)-2nf(w)\\
 &=\widetilde f(a+b).
\end{aligned}
\]
Here the last equality uses the defining formula with shift $2n$.
Thus restriction is an isomorphism on every contributing face.
With matching face orientations, these maps commute with the
differentials, proving \eqref{eq:quasilinear-restriction}.

We next show that the induced cohomology maps are obtained by
restricting Harrison cochains on $A_\tau$ to inputs in the
subalgebra $A_\sigma$. For a $q$-cochain $h$ of degree $-jm$,
this is the map $h|_{A_\sigma^{\otimes q}}$, whose values lie
in $A_\sigma$ by \eqref{eq:truncationdownward}.
This identification will let us compare the obstruction classes.
Put $S_\gamma^+=S_\gamma\setminus\{0\}$ and let
$\mathcal H_\gamma^q$ be the normalized Harrison cochains of
degree $-jm$ on $A_\gamma$. Writing their monomial coefficients as
\[
 h(\chi^{a_1},\ldots,\chi^{a_q})
 =f(a_1,\ldots,a_q)\chi^{a_1+\cdots+a_q-jm}
\]
identifies them with scalar Harrison cochains on $(S_\gamma^+)^q$
subject to
\[
 f(a_1,\ldots,a_q)=0
 \quad\text{if}\quad a_1+\cdots+a_q-jm\notin S_\gamma.
\]
This is the relative complex of
\cite[Section~2.2 and Theorem~2.3]{AltmannSletsjoe}.

To relate these cochains to the quasilinear complex, let
$\mathcal C^q(K)$ denote scalar Harrison cochains on
$q$-tuples in $K$ whose sum is in $K$, and form the double complex
\[
 D_\gamma^{p,q}
 =\bigoplus_{\substack{F\preceq\gamma\\\dim F=p}}
    \mathcal C^q(K_{\gamma,F}^{jm}),
 \qquad p\ge0,\quad q\ge1.
\]
Its vertical differential $\delta$ is the inhomogeneous Harrison
differential, and its horizontal differential $d$ is signed
face restriction; the total differential is $d+(-1)^p\delta$.
The coefficient inclusion
$\alpha_\gamma:\mathcal H_\gamma^\bullet\to
\operatorname{Tot}D_\gamma$ places $f$ in $D_\gamma^{0,q}$
and zero elsewhere. Its horizontal differential vanishes:
a tuple whose sum lies in $K_{\gamma,\rho}^{jm}$ for a ray
$\rho$ satisfies the displayed vanishing condition.
The augmented horizontal rows are exact by
\cite[Lemma~3.2]{AltmannSletsjoe}, so $\alpha_\gamma$ is a
quasi-isomorphism.

Restriction of tuple functions on each common face defines
$r:D_\tau\to D_\sigma$; the remaining face summands are zero.
It commutes with $\delta$ and $d$, since both differentials
use only addition of inputs and restriction to faces.
Restricting inputs gives the cochain map $\operatorname{res}$
in the commutative square
\[
\begin{array}{ccc}
 \mathcal H_\tau^\bullet&
 \xrightarrow{\alpha_\tau}&\operatorname{Tot}D_\tau\\
 {\scriptstyle\operatorname{res}}\downarrow&&\downarrow{\scriptstyle r}\\
 \mathcal H_\sigma^\bullet&
 \xrightarrow{\alpha_\sigma}&\operatorname{Tot}D_\sigma.
\end{array}
\]
Indeed, both composites restrict the same coefficient function
to $(S_\sigma^+)^q$ in the zero-face column.

Taking vertical cohomology gives
$E_1^{p,q}=H_\delta^q(D_\gamma^{p,\bullet})$.
Its $q=1$ row is $C_\gamma^\bullet(jm)$, and the induced map
is precisely the quasilinear restriction already considered.
Moreover, $E_1^{0,q}=E_1^{1,q}=0$ for $q\ge2$ by
\cite[Corollary~4.4]{AltmannSletsjoe}.
In total degrees $2$ and $3$, these vanishings leave only the
$q=1$ row. Its inclusion therefore induces
\[
 H^i(C_\gamma^\bullet(jm))
 \xrightarrow{\sim}H^{i+1}(\operatorname{Tot}D_\gamma),
 \qquad i=1,2,
\]
as in \cite[Theorem~5.3]{AltmannSletsjoe}.
These inclusions commute with $r$, so the maps
$T_{X_\tau}^i(-jm)\to T_{X_\sigma}^i(-jm)$, $i=1,2$,
induced by \eqref{eq:quasilinear-restriction} agree with those
induced by restricting Harrison cochains. This proves
\eqref{eq:truncationcohomology}.

It remains to check that these are the maps used in lifting
deformations. Let $R'\twoheadrightarrow R$ be a small graded extension
with kernel $E$. Lift the multiplication of $A_{\tau,R}$ to a
homogeneous commutative unital operation on
$\bigoplus_{s\in S_\tau}R'e_s$, keeping $e_0=1$ as the unit.
By \eqref{eq:truncationdownward}, its restriction preserves
$\bigoplus_{s\in S_\sigma}R'e_s$, and its associator restricts
to the associator of this restricted operation.
Here the associator measures the failure of the chosen operation
to be associative.
Since $\mathfrak m_{R'}E=0$, these associators represent the
obstruction classes in $(T_{X_\tau}^2\otimes_\C E)_0$ and
$(T_{X_\sigma}^2\otimes_\C E)_0$, respectively.
Likewise, differences of associative liftings and the
coboundaries arising from changes of basis restrict.
Thus the maps
\[
 (T_{X_\tau}^i\otimes_\C E)_0
 \longrightarrow (T_{X_\sigma}^i\otimes_\C E)_0,
 \qquad i=1,2,
\]
carry obstruction classes for $i=2$ and difference classes for
$i=1$ to those of the restricted deformations.
\end{proof}

\begin{theorem}\label{thm:truncation}
The transformation $\mathcal R$ is an isomorphism of equivariant
deformation functors. In particular, restricting a formally
universal $\tau$-family gives a formally universal $\sigma$-family
over the same base.
\end{theorem}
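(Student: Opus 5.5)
The plan is to use the standard criterion for a morphism of deformation functors: a natural transformation $F\to G$ of functors on $\operatorname{Art}_m$ is an isomorphism once it is bijective on first-order deformations, injective on obstructions, and surjective on each fibre of a small extension. These three conditions are exactly what Lemma~\ref{lem:truncationcohomology} provides. The argument will be an induction on the length of $R$ along small graded extensions $R'\twoheadrightarrow R$ with kernel $E$. At the base, $\Def_m^{X_\tau}(\C)=\Def_m^{X_\sigma}(\C)$ is a point. For $D_j$, the tangent map of $\mathcal R$ is the isomorphism $T^1_{X_\tau}(-jm)\simeq T^1_{X_\sigma}(-jm)$ by the final assertion of that lemma.

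\textbf{Induction step.} Fix $\xi\in\Def_m^{X_\tau}(R)$ and suppose, inductively, that $\mathcal R$ is bijective over $R$.

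\emph{Surjectivity over $R'$.} Let $\zeta'\in\Def_m^{X_\sigma}(R')$ restrict to $\mathcal R(\xi)$. The obstruction to lifting $\xi$ lies in $(T^2_{X_\tau}\otimes E)_0$, and Lemma~\ref{lem:truncationcohomology} sends it to the obstruction to lifting $\mathcal R(\xi)$. The latter vanishes because $\zeta'$ exists. Injectivity of the $T^2$ map then shows that $\xi$ lifts to some $\xi'$. Lifts of $\xi$ form a torsor under $(T^1_{X_\tau}\otimes E)_0$, and lifts of $\mathcal R(\xi)$ form a torsor under $(T^1_{X_\sigma}\otimes E)_0$. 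The same lemma says that $\mathcal R$ carries difference classes to difference classes, so it is equivariant for these actions. Since the $T^1$ map is bijective, $\mathcal R$ is surjective onto lifts of $\mathcal R(\xi)$, and $\zeta'$ is hit.

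\emph{Injectivity over $R'$.} Suppose $\xi'_1,\xi'_2$ have isomorphic images. By the induction hypothesis, their restrictions to $R$ are isomorphic, so after transporting along that isomorphism both lift the same $\xi$. The subtle point is that the isomorphism of images over $R'$ restricts to some automorphism of $\mathcal R(\xi)$ that need not be the identity. The step therefore requires controlling automorphisms: restriction should identify $\operatorname{Aut}(\xi)$ with $\operatorname{Aut}(\mathcal R(\xi))$, and infinitesimal automorphisms, $(T^0\otimes E)_0$, should correspond. Granted that, the difference class of $\xi'_1,\xi'_2$ maps to the difference class of two lifts of $\mathcal R(\xi)$ that are isomorphic compatibly with the identity on $\mathcal R(\xi)$, and this is zero in $T^1_{X_\sigma}$. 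Injectivity of the $T^1$ map gives $\xi'_1\simeq\xi'_2$.

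\textbf{Main obstacle: automorphisms.} I will show that a graded $R$-automorphism of $\mathcal R(A_{\tau,R})$ inducing the identity on $A_\sigma$ extends uniquely to $A_{\tau,R}$, and that the analogous statement holds for degree-$0$ derivations $A_\sigma\to A_\sigma\otimes E$. I would argue with the bases from \eqref{eq:truncationsubalgebra}.

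\emph{Rewriting a monomial of $S_\tau$.} For $s\in S_\tau$, choose a nonzero lattice $w$ in the interior of $\sigma^\vee$, as in the proof of Lemma~\ref{lem:truncationcohomology}. Then $s+nw\in S_\sigma$ for $n\gg0$, and $e_w$ is a nonzerodivisor because it lifts the nonzerodivisor $\chi^w$ of the domain $A_\tau$ over a flat base. Hence $e_s$ is determined inside $A_{\tau,R}$ by the equation $e_w^n e_s=$ (element of $\mathcal R$).

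\emph{Extending the map.} For the automorphism $\varphi$, I would define the extension by
\[\varphi(e_s)=\varphi(e_{s+nw})\,\varphi(e_w)^{-n},\]
computed in the localization at $e_w$. The remaining task is to check that this lands back in $A_{\tau,R}$. This is the delicate step. I would first try to do it via the quasilinear extension formula $\widetilde f(a)=f(a+nw)-nf(w)$ from the lemma's proof, applied degree by degree to $\log\varphi$, which is a derivation because $\varphi$ is unipotent. Degree considerations using \eqref{eq:truncationdownward} should then confine the values to $S_\tau$-degrees. The $T^0$ version is the $i=0$ analogue of \eqref{eq:quasilinear-restriction}, and the same extension argument on the zero face handles it.

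\textbf{Conclusion.} Once $\mathcal R$ is an isomorphism, composing the prorepresentation $h_C\simeq\Def_m^{X_\tau}$ with $\mathcal R$ gives $h_C\simeq\Def_m^{X_\sigma}$. Hence the restricted family is formally universal over the same base.
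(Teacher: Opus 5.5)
Your architecture matches the paper's. You map the obstruction to lifting $\xi$ injectively to that of $\mathcal R(\xi)$ and use the $T^1$ isomorphism to adjust the lift, which is exactly the paper's formal-smoothness argument and gives surjectivity. You then reduce injectivity, via $T^1$-injectivity, to controlling automorphisms of $\mathcal R(\xi)$. Your opening ``standard criterion'' is not enough on its own: tangent bijectivity and obstruction injectivity only give formal smoothness, so all of injectivity rests on the automorphism step.

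That step has a genuine gap. You never show that $\widetilde\varphi(e_s)=\varphi(e_{s+nw})\varphi(e_w)^{-n}$ lies in $A_{\tau,R}$, and the suggested mechanism does not do it. The inclusion \eqref{eq:truncationdownward} only constrains degrees $s-jm$ with $s\in S_\sigma$, and for such $s$ the bad components vanish automatically anyway. The components you must exclude are those of $\widetilde\varphi(e_s)$ with $s\in S_\tau\setminus S_\sigma$ in degrees $s-jm\notin S_\tau$, and no degree count rules them out. Moreover, $\log\varphi$ is a derivation of the deformed algebra $\mathcal R(A_{\tau,R})$, not of $A_\sigma$. The quasilinear extension formula, or an $i=0$ version of \eqref{eq:quasilinear-restriction}, handles only its leading term in the $\mathfrak m_R$-adic filtration. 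The higher terms are not derivations of $A_\sigma$, so a real induction is needed, and you do not supply one.

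The paper avoids this issue. Unless $P_m$ is an integral translate of its tail cone, it proves $\operatorname{Der}_{\C}(\C[S_\sigma])_{-jm}=0$ for all $j\ge1$ by a ray argument:
\begin{itemize}
\item for each ray $v$ with $\langle m,v\rangle>0$, regularity on the corresponding facet forces $n\in\C v$;
\item two such rays force $n=0$;
\item with only one such ray, testing on $s$ with $\langle s,v\rangle=1$ forces $j=\langle m,v\rangle=1$, which is the excluded case.
\end{itemize}
The first nonzero term of $\varphi-\mathrm{id}$ in the $\mathfrak m_R$-adic filtration would then be a nonzero element of $(\operatorname{Der}_{\C}(\C[S_\sigma])\otimes_{\C}\mathfrak m_R^a/\mathfrak m_R^{a+1})_0$, which is impossible. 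So $\mathcal R(\xi)$ has no nontrivial automorphisms, and your ``subtle point'' disappears. The excluded case is handled separately: there $B_0=B=\C$, so the source is the point functor, and surjectivity makes the target one too.

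If you want to keep your uniform route, the missing argument is an induction over small extensions proving that $\operatorname{Aut}(\xi)\to\operatorname{Aut}(\mathcal R(\xi))$ is surjective. First, extend the reduction of a given automorphism over $R$ by the induction hypothesis. Second, lift it over $R'$ using $T^1$-injectivity, since the obstruction to lifting an automorphism is a difference class. Third, correct the lift using surjectivity of $\operatorname{Der}_{\C}(A_\tau)_{-jm}\to\operatorname{Der}_{\C}(A_\sigma)_{-jm}$. You would still have to establish that surjectivity, for example from $H^0$ of \eqref{eq:quasilinear-restriction}.
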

\begin{proof}
We first prove formal smoothness: a prescribed lifting of the
restricted deformation comes from a lifting of the original
deformation.
Let $R'\twoheadrightarrow R$ be a small graded extension with
kernel $E$. Suppose we are given $A_{\tau,R}$ and a lifting
$A_{\sigma,R'}$ with a fixed identification
$A_{\sigma,R'}\otimes_{R'}R\simeq\mathcal R(A_{\tau,R})$.
We must find a lifting of $A_{\tau,R}$ whose restriction is
isomorphic to $A_{\sigma,R'}$ compatibly with this identification.

The obstruction to lifting $A_{\tau,R}$ over $R'$ lies in
$(T_{X_\tau}^2\otimes_\C E)_0$.
By Lemma~\ref{lem:truncationcohomology}, its image in
$(T_{X_\sigma}^2\otimes_\C E)_0$ is the obstruction to lifting
$\mathcal R(A_{\tau,R})$ over $R'$.
This image is zero, since the given $A_{\sigma,R'}$ already
provides such a lifting.
Injectivity of this map therefore makes the original obstruction
vanish, so a lifting $A_{\tau,R'}$ exists by the standard lifting theorem
\cite[Theorem~10.1]{HartshorneDef}.
Here total degree includes the grading of $E$: a cochain of
degree $-jm$ with coefficient in $E_{jm}$ has total degree zero.
Since the Harrison differential preserves total degree, the
cochain used to cancel the obstruction may be replaced by its
degree-zero part. Thus the lifting can be chosen graded.

This lifting need not yet have the prescribed restriction.
We therefore compare $\mathcal R(A_{\tau,R'})$ with
$A_{\sigma,R'}$, keeping their identifications over $R$ fixed.
By the same lifting theorem, their difference is a class in
$(T_{X_\sigma}^1\otimes_\C E)_0$, and such classes describe
all changes of a lifting up to isomorphism respecting its reduction.
The isomorphism
$(T_{X_\tau}^1\otimes_\C E)_0\xrightarrow{\sim}
(T_{X_\sigma}^1\otimes_\C E)_0$
induced by Lemma~\ref{lem:truncationcohomology} lifts this difference
to $(T_{X_\tau}^1\otimes_\C E)_0$.
Changing $A_{\tau,R'}$ by that class gives the required restriction,
by the compatibility proved in the lemma. This proves formal smoothness.

Formal smoothness gives surjectivity of $\mathcal R$ over every
Artinian base, by induction over small extensions. It remains
to prove injectivity. If $P_m$ is an integral translate of its
tail cone, its rounded support function is integral linear.
Thus all $\eta_{\Z,\beta}$ vanish and $B_0=B=\C$.
The source is then the point functor by
Corollary~\ref{cor:universal-X}, and surjectivity makes the
target the point functor as well.

In the remaining case, we first show that a $\sigma$-deformation
has no nonidentity equivariant automorphisms preserving its
special fibre. This will make the identifications over $R$
agree in the final lifting argument. We claim that
\begin{equation}\label{eq:truncation-derivations}
 \operatorname{Der}_{\C}(\C[S_\sigma])_{-jm}=0
 \qquad\text{for all }j\ge1.
\end{equation}

A derivation of $\C[S_\sigma]$ of degree $-jm$ extends to the
torus as $\chi^{-jm}\partial_n$, where
$\partial_n(\chi^s)=\langle s,n\rangle\chi^s$ and
$n\in\operatorname{Hom}_{\Z}(\LL,\C)$.
For each primitive ray vector $v$ with $\langle m,v\rangle>0$,
regularity on $S_\sigma\cap v^\perp$ forces $n\in\C v$.
Indeed, subtracting $jm$ gives negative pairing with $v$,
so $n$ must vanish on this facet of the semigroup.
There is at least one such ray because $m\notin-S$;
two such rays force $n=0$. If there is only one, choose
$s\in S_\sigma$ with $\langle s,v\rangle=1$: primitiveness
provides a lattice element of height one, and adding a sufficiently
large lattice element in the relative interior of
$\sigma^\vee\cap v^\perp$ puts it in $S_\sigma$.
If $n\ne0$, regularity on this monomial requires
$1-j\langle m,v\rangle\ge0$, hence
$j=\langle m,v\rangle=1$.
In that case $P_m$ has the single vertex $v$ and tail cone
$\sigma\cap[m=0]$, so it is the excluded integral translate.
This proves \eqref{eq:truncation-derivations}.
An equivariant automorphism preserving the special fibre is
the identity modulo the base maximal ideal. Its first nonzero
term in the filtration by powers of that ideal would be a
nonzero element of
$(\operatorname{Der}_{\C}(\C[S_\sigma])\otimes_\C
\mathfrak m_R^a/\mathfrak m_R^{a+1})_0$ for some $a\ge1$.
Every coefficient has degree $jm$ with $j\ge1$, so
\eqref{eq:truncation-derivations} rules this out.

We can now prove injectivity by induction over small extensions.
Suppose two $\tau$-deformations over
$R'\twoheadrightarrow R$ have isomorphic restrictions.
Inductively their reductions over $R$ are isomorphic; fix such
an identification. Its restriction agrees with the reduction
of the given isomorphism, since the two differ by an equivariant
automorphism of the common $\sigma$-deformation, which must be
the identity.
The difference between the two liftings is consequently a class
in $(T_{X_\tau}^1\otimes_\C E)_0$ mapping to zero for $\sigma$.
Injectivity in Lemma~\ref{lem:truncationcohomology} makes this
class zero, so the liftings are isomorphic. Induction proves
injectivity of $\mathcal R$, and hence the assertion.
\end{proof}

\begin{remark}\label{rem:nonpositive}\label{subsec:nonpositive}
If $m\in -S$, then $P_m=\varnothing$, and every nonzero-face term
in the same cohomology complex vanishes. Hence $T_X^1(-jm)=0$
for all $j\ge1$. Induction over small
extensions shows that every graded deformation is trivial.
Thus $\Def_m^X$ is the point functor, prorepresented by $B=\C$.
\end{remark}

\begin{proof}[Proof of Theorem~\ref{thm:main}]
We construct the restricted algebraic family, prove
its finite generation, and transfer universality and the component
description from $X_\tau$.
Apply Section~\ref{subsec:minimal} to $X_\tau$.
Corollary~\ref{cor:universal-X} gives a family $\mathcal A_\tau/B$
with finitely generated positively graded base $B$ whose completion
is formally universal for $\Def_m^{X_\tau}$. In the integral-translate
case, $B=\C$.
Using its homogeneous monomial basis $(e_s)_{s\in S_\tau}$,
the restriction formula \eqref{eq:truncationsubalgebra} gives
\[
 \mathcal A_\sigma=\bigoplus_{s\in S_\sigma}Be_s
 \subset\mathcal A_\tau.
\]
By \eqref{eq:truncationdownward}, multiplication preserves this
submodule. It is therefore a flat $B$-algebra with special fibre
$A_\sigma$, and its construction commutes with base change.

It remains to check finite generation over $B$.
Choose an integral $\ell\in\operatorname{int}\sigma$ with
$\ell(m)>0$, which exists because $m\notin-S$.
Let $s_1,\ldots,s_h$ generate $S_\sigma$ as a semigroup.
For $s=\sum_\nu a_\nu s_\nu$, homogeneity and the special-fibre
multiplication give
\[
 \prod_{\nu=1}^h e_{s_\nu}^{a_\nu}
 =e_s+\sum_{j>0}b_j e_{s-jm},\qquad b_j\in B_{jm},
\]
with finitely many nonzero terms. Their exponents lie in
$S_\sigma$ by \eqref{eq:truncationdownward}, and
$\ell(s-jm)<\ell(s)$. Induction on the nonnegative integer
$\ell(s)$ therefore expresses every $e_s$ in terms of
$e_{s_1},\ldots,e_{s_h}$. These lifts generate
$\mathcal A_\sigma$ over $B$.

By base-change compatibility, Theorem~\ref{thm:truncation}
proves formal universality of its completion for $\Def_m^{X_\sigma}$.
Since $\sigma$ and $\tau$ have the same slice $P_m$,
Proposition~\ref{prop:componentimages} and
Corollary~\ref{cor:coefficient-components} give the component
correspondence and dimension $k-1$ for $k>0$ nontrivial summands.
The family on each component is obtained by base change;
its completion gives the formal component family.
\end{proof}

\begin{corollary}\label{cor:ordinary-arbitrary}
Let $m\in\LL$ be primitive. If $\Def^{X_\sigma}$ admits a hull
with ring $H_{X_\sigma}$, chosen torus-equivariantly as in
Subsection~\ref{subsec:usual-miniversal}, then
$\widehat B\simeq H_{X_\sigma,m}$, with the notation of
\eqref{eq:ordinary-ray-base}. The completed family constructed
above is the restriction of the corresponding miniversal formal
family to the parameter degrees $jm$, $j\ge1$.

For $m\notin-S$, put $\tau=\sigma\cap[m\ge0]$.
If $\Def^{(X_\tau,V(\chi^m))}$ admits a hull, choose its
miniversal formal family torus-equivariantly as in
Subsection~\ref{subsec:usual-miniversal}. Its restriction to the
parameter degrees $jm$, $j\ge1$, is isomorphic to
$\widehat\pi_0$ over $\widehat B_0$.
\end{corollary}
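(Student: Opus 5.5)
The plan is to treat the two parts separately. Both reduce to results already proved: Corollary~\ref{cor:ordinary-X}, Corollary~\ref{cor:ordinary-pair}, and the functor isomorphism of Theorem~\ref{thm:truncation}.

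For the first assertion, split into cases according to $m$. If $m\in-S$, Remark~\ref{rem:nonpositive} shows that $\Def_m^{X_\sigma}$ is the point functor and $T_{X_\sigma}^1(-jm)=0$ for all $j\ge1$. Since $J\subset(t_1,\ldots,t_N)^2$, there are then no minimal parameters of degree $jm$, so $H_{X_\sigma,m}=\C=\widehat B$, and both restricted families are trivial. If $m\notin-S$, Theorem~\ref{thm:main} (proved via Theorem~\ref{thm:truncation}) says that $\widehat B$ prorepresents $\Def_m^{X_\sigma}$. We then repeat the argument of Corollary~\ref{cor:ordinary-pair}, as already indicated at the end of the proof of Corollary~\ref{cor:ordinary-X}, with $Y=X_\sigma$:
\begin{itemize}
\item equivariant lifting from linear reductivity of $\mathbb T$ shows that the restriction of the torus-equivariant hull family to $H_{X_\sigma,m}$ is formally versal for $\Def_m^{X_\sigma}$, with bijective Kodaira--Spencer map;
\item universality of $\widehat B$ gives a graded map $\widehat B\to H_{X_\sigma,m}$, and versality of the restricted hull gives a compatible map back;
\item both maps induce tangent isomorphisms, so their composites are automorphisms of complete Noetherian local rings, and the classifying isomorphisms identify the families.
\end{itemize}
None of these steps uses $m\in S$, so this case needs no new input beyond prorepresentability.

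For the second assertion, note that $m\in S_\tau$. Corollary~\ref{cor:ordinary-pair} applies verbatim to the cone $\tau$, whose pair base $\widehat B_0$ and family $\widehat\pi_0$ are built in Section~\ref{sec:finite} from $S_\tau$. It identifies the restriction of the chosen equivariant miniversal pair family over $H_{(X_\tau,V(\chi^m)),m}$ with $\widehat\pi_0$. The statement should be read with $\widehat\pi_0$ meaning the $\tau$-pair family, since for $m\notin S_\sigma$ there is no divisor $V(\chi^m)$ on $X_\sigma$. Under Theorem~\ref{thm:truncation}, forgetting the divisor and restricting to the subalgebra $\mathcal R$ recovers the $\sigma$-family over $\widehat B$.

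The main obstacle is making sure the comparison argument of Corollary~\ref{cor:ordinary-pair} only uses prorepresentability of the graded functor together with equivariant versality of the hull restriction, and nothing specific to the Gr\"obner presentation of $X_\sigma$, which is unavailable when $m\notin S_\sigma$. I would check this by rereading that proof. Its first part uses only Rim's theorem and the $\mathbb T$-module structure of $T^0_Y$ and $T^1_Y$, and its last part uses only the universal property. The one new point is the tangent-space comparison. The Kodaira--Spencer map of $\widehat B$ is bijective onto $\bigoplus_{j\ge1}T^1_{X_\sigma}(-jm)$ because $\widehat B$ prorepresents $\Def_m^{X_\sigma}$, and this is consistent with the isomorphism $T^1_{X_\tau}(-jm)\simeq T^1_{X_\sigma}(-jm)$ of Lemma~\ref{lem:truncationcohomology}. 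With this checked, the two complete local rings have matching tangent spaces, and the argument goes through.
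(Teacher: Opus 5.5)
Your proposal is correct and follows the paper's proof: prorepresentability of $\widehat B$ (Theorem~\ref{thm:main} for $m\notin-S$, Remark~\ref{rem:nonpositive} for $m\in-S$), then the comparison argument of Corollary~\ref{cor:ordinary-X}, and Corollary~\ref{cor:ordinary-pair} applied directly to $X_\tau$ for the pair statement. The differences are cosmetic. For $m\in-S$ you count minimal parameters directly, whereas the paper runs the same comparison argument there too. Your remarks on the truncation isomorphism of $T^1$ and on recovering the $\sigma$-family via $\mathcal R$ are consistent with the paper but not needed.
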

\begin{proof}
Theorem~\ref{thm:main} and
Remark~\ref{rem:nonpositive} show that $\widehat B$ prorepresents
$\Def_m^{X_\sigma}$. The comparison argument of
Corollary~\ref{cor:ordinary-X} therefore identifies this base
and family with the restriction of the chosen miniversal
formal family over $H_{X_\sigma}$.
For the pair, $m\in S_\tau$, so
Corollary~\ref{cor:ordinary-pair} applies directly to $X_\tau$.
\end{proof}

Together with Corollary~\ref{cor:ordinary-pair}, this proves
Theorem~\ref{thm:usual-miniversality}.

\begin{example}\label{ex:mixed-sign}
Let $X_\sigma=X_0\times\mathbb A^1_v$, where
$X_0=\frac18(1,3)$ and $m_0=(2,2)$ are as in
Example~\ref{ex:eighth}. Set
\[
 \LL_0=\{(p,q)\in\Z^2:p+3q\in8\Z\},\qquad
 \LL=\LL_0\oplus\Z,\qquad m=(m_0,-1)\notin S_\sigma.
\]
Write $Z=\chi^{(m_0,0)}$ for the character called $U$ in that
example, and $v=\chi^{(0,1)}$ for the last coordinate.
With the notation of Example~\ref{ex:eighth}, put
\[
 Q_\beta(Z,v)=Z^{\eta_{\Z,\beta}}
   +\sum_{j=1}^{\eta_{\Z,\beta}}
       t_{\beta,j}Z^{\eta_{\Z,\beta}-j}v^j,
 \qquad \deg t_{\beta,j}=jm.
\]
The deformed equations are
$G_\beta=\LM(g_\beta)-Q_\beta(Z,v)W_\beta$.
Their overlap conditions are
\[
 Q_5=Q_1Q_3=Q_2Q_4,\qquad Q_6=Q_3Q_4.
\]
Comparing coefficients gives the same coefficient base $B_0$ as in
Example~\ref{ex:eighth-components}. The homogeneous change
$Z'=Z+t_{2,1}v$ normalizes $t_{2,1}=0$.
Renaming $Z'$ as $Z$ and writing $a=t_{1,1}$, $b=t_{3,1}$,
$c=t_{3,2}$ for the normalized coefficients gives
\[
 B=\C[a,b,c]/(ac),\qquad
 \deg a=\deg b=m,\quad\deg c=2m.
\]
The normalized polynomials are
\[
\begin{aligned}
 q_1&=Z+av,&q_2&=Z,\\
 q_3&=Z^2+bZv+cv^2,
 &q_4&=Z^2+(a+b)Zv+(c+ab)v^2,\\
 q_5&=q_1q_3=q_2q_4,&q_6&=q_3q_4.
\end{aligned}
\]
Thus the equivariant family over $B$ has equations
\begin{equation}\label{eq:mixed-sign-sigma}
\begin{aligned}
 z_1^2&=q_1z_0,&z_3^2&=q_2z_4,\\
 z_0z_3&=q_3z_1,&z_1z_4&=q_4z_3,\\
 z_1z_3&=q_5,&z_0z_4&=q_6.
\end{aligned}
\end{equation}
The standard monomials $Z^qv^kW$, with $q,k\ge0$ and
$W\in\mathcal W$, form a free $B$-basis.
Since $Q_\beta(Uv,v)=v^{\eta_{\Z,\beta}}p_\beta(U)$,
this is the restriction under $Z=Uv$ of the family for
$\tau=\sigma\cap[m\ge0]$, where $U=\chi^m$; its completion
is universal by Theorem~\ref{thm:truncation}.
Setting $v=1$ recovers exactly the normalized family of
Example~\ref{ex:eighth-components} over the same base $B$.
\end{example}


\begin{thebibliography}{ACF22a}
\small
\setlength{\baselineskip}{11pt}
\setlength{\itemsep}{0pt}
\setlength{\parsep}{0pt}
\setlength{\parskip}{0pt}
\bibitem[AL94]{AL}
W.~W.~Adams and P.~Loustaunau,
\emph{An Introduction to Gr\"obner Bases},
Graduate Studies in Mathematics \textbf{3},
American Mathematical Society, Providence, RI, 1994.

\bibitem[Alt97]{Altmann}
K.~Altmann,
\emph{The versal deformation of an isolated, toric Gorenstein singularity},
Inventiones Mathematicae \textbf{128} (1997), 443--479.

\bibitem[Alt00]{AltmannFamilies}
K.~Altmann,
\emph{One-parameter families containing three-dimensional toric Gorenstein singularities},
in \emph{Explicit Birational Geometry of 3-Folds} (A.~Corti and M.~Reid, eds.),
London Mathematical Society Lecture Note Series 281, Cambridge University Press, 2000, 21--50.

\bibitem[ACF22a]{ACFpoly}
K.~Altmann, A.~Constantinescu, and M.~Filip,
\emph{Polyhedra, lattice structures, and extensions of semigroups},
Journal of the London Mathematical Society \textbf{106} (2022), 3938--4008.

\bibitem[ACF22b]{ACF}
K.~Altmann, A.~Constantinescu, and M.~Filip,
\emph{Versality in toric geometry},
Journal of Algebra \textbf{609} (2022), 1--43.

\bibitem[AK13]{AltmannKastner}
K.~Altmann and L.~Kastner,
\emph{Negative deformations of toric singularities that are smooth in codimension two},
in \emph{Deformations of Surface Singularities}
(A.~N\'emethi and \'A.~Szil\'ard, eds.),
Bolyai Society Mathematical Studies \textbf{23},
Springer, Berlin--Heidelberg, 2013, 13--55.

\bibitem[AS98]{AltmannSletsjoe}
K.~Altmann and A.~B.~Sletsj\o e,
\emph{Andr\'e--Quillen cohomology of monoid algebras},
Journal of Algebra \textbf{210} (1998), 708--718.

\bibitem[Art74]{ArtinVersal}
M.~Artin,
\emph{Versal deformations and algebraic stacks},
Inventiones Mathematicae \textbf{27} (1974), 165--189.

\bibitem[CFP22]{CFP}
A.~Corti, M.~Filip and A.~Petracci,
\emph{Mirror symmetry and smoothing Gorenstein toric affine 3-folds},
in \emph{Facets of Algebraic Geometry}, Vol.~I,
London Mathematical Society Lecture Note Series \textbf{472},
Cambridge University Press, Cambridge, 2022, 132--163.

\bibitem[Fil18]{FilipHochschild}
M.~Filip,
\emph{Hochschild cohomology and deformation quantization of affine toric varieties},
Journal of Algebra \textbf{506} (2018), 188--214.

\bibitem[Fil26a]{FilipPair}
M.~Filip,
\emph{Deformations of an affine Gorenstein toric pair},
Journal of Algebra \textbf{687} (2026), 419--445.

\bibitem[Fil26b]{FilipLaurent}
M.~Filip,
\emph{Laurent polynomials and deformations of non-isolated Gorenstein toric singularities},
Advances in Mathematics \textbf{487} (2026), Article 110775.

\bibitem[Gra72]{Grauert}
H.~Grauert,
\emph{\"Uber die Deformation isolierter Singularit\"aten analytischer Mengen},
Inventiones Mathematicae \textbf{15} (1972), 171--198.

\bibitem[Har10]{HartshorneDef}
R.~Hartshorne,
\emph{Deformation Theory},
Graduate Texts in Mathematics \textbf{257},
Springer, New York, 2010.

\bibitem[IV12]{IltenVollmert}
N.~O.~Ilten and R.~Vollmert,
\emph{Deformations of rational $T$-varieties},
Journal of Algebraic Geometry \textbf{21} (2012), no.~3, 531--562.

\bibitem[JP00]{deJongPfister}
T.~de Jong and G.~Pfister,
\emph{Local Analytic Geometry: Basic Theory and Applications},
Advanced Lectures in Mathematics,
Vieweg, Braunschweig--Wiesbaden, 2000.

\bibitem[JS94]{deJongVanStraten}
T.~de Jong and D.~van Straten,
\emph{On the deformation theory of rational surface singularities
with reduced fundamental cycle},
Journal of Algebraic Geometry \textbf{3} (1994), no.~1, 117--172.

\bibitem[KS88]{KSB}
J.~Koll\'ar and N.~I.~Shepherd-Barron,
\emph{Threefolds and deformations of surface singularities},
Inventiones Mathematicae \textbf{91} (1988), no.~2, 299--338.

\bibitem[Rim80]{Rim}
D.~S.~Rim,
\emph{Equivariant $G$-structure on versal deformations},
Transactions of the American Mathematical Society
\textbf{257} (1980), no.~1, 217--226.

\bibitem[Ser06]{Sernesi}
E.~Sernesi,
\emph{Deformations of Algebraic Schemes},
Grundlehren der mathematischen Wissenschaften \textbf{334},
Springer-Verlag, Berlin--Heidelberg, 2006.

\bibitem[Stacks]{Stacks}
The Stacks Project Authors,
\emph{The Stacks Project},
\href{https://stacks.math.columbia.edu/tag/06G8}{Tag 06G8} (hulls),
\href{https://stacks.math.columbia.edu/tag/051F}{Tag 051F} (lifting bases), and
\href{https://stacks.math.columbia.edu/tag/00MB}{Tag 00MB} (exactness of completion).

\bibitem[Ste91]{StevensCyclic}
J.~Stevens,
\emph{On the versal deformation of cyclic quotient singularities},
in \emph{Singularity Theory and its Applications, Warwick 1989,
Part I: Geometric Aspects of Singularities},
Lecture Notes in Mathematics \textbf{1462},
Springer-Verlag, Berlin--Heidelberg, 1991, 302--319.

\bibitem[Ste03]{StevensBook}
J.~Stevens,
\emph{Deformations of singularities},
Lecture Notes in Mathematics \textbf{1811},
Springer, Berlin, 2003.

\end{thebibliography}
\end{document}